\numberwithin{equation}{section}
\theoremstyle{plain}
\newtheorem{Theorem}{Theorem}
\numberwithin{Theorem}{section}
\newtheorem{Corollary}[Theorem]{Corollary}
\newtheorem{Lemma}[Theorem]{Lemma}
\newtheorem{Proposition}[Theorem]{Proposition}
\newtheorem{Conjecture}[Theorem]{Conjecture}
\theoremstyle{definition}
\newtheorem{Definition}[Theorem]{Definition}
\newtheorem{Example}[Theorem]{Example}
\newtheorem*{Notation}{Notation}
\theoremstyle{remark}
\newcommand{\C}{\mathbb{C}}
\newcommand{\NN}{\mathbb{N}}
\newcommand{\PP}{\mathbb{P}}
\newcommand{\F}{\mathbb{F}}
\newcommand{\R}{\mathbb{R}}
\newcommand{\mR}{\mathsmaller{\mathbb{R}}}
\newcommand{\xx}{\textbf{x}}
\newcommand{\yy}{\textbf{y}}
\newcommand{\nn}{\textbf{n}}
\newcommand{\sE}{\mathcal{E}}
\newcommand{\sI}{\mathcal{I}}
\newcommand{\sF}{\mathcal{F}}
\newcommand{\sO}{\mathcal{O}}
\newcommand{\red}{\color{red}}
\newcommand{\blue}{\color{blue}}
\newcommand{\ser}{\mathcal E^{(r)}}
\author{Luca Sodomaco}
\address{Department of Mathematics and Systems Analysis, Aalto University, Espoo, Finland}
\email{lucasodomaco@gmail.com}
\author{Ettore Teixeira Turatti}
\address{Universit\`{a} di Firenze, Dipartimento di Matematica e Informatica, Viale Morgagni 67/A, 50134 Firenze, Italy}
\email{ettore.teixeiraturatti@unifi.it}
\subjclass[2020]{14N07; 15A18; 15A69}
\keywords{Tensors, Singular tuples, Segre variety, Critical space}
\title[The span of singular tuples of a tensor beyond the boundary format]{The span of singular tuples of a tensor \\ beyond the boundary format}
\begin{document}

\maketitle

\begin{abstract}
A singular $k$-tuple of a tensor $T$ of format $(n_1,\ldots,n_k)$ is essentially a complex critical point of the distance function from $T$ constrained to the cone of tensors of format $(n_1,\ldots,n_k)$ of rank at most one. A generic tensor has finitely many complex singular $k$-tuples, and their number depends only on the tensor format. Furthermore, if we fix the first $k-1$ dimensions $n_i$, then the number of singular $k$-tuples of a generic tensor becomes a monotone non-decreasing function in one integer variable $n_k$, that stabilizes when $(n_1,\ldots,n_k)$ reaches a boundary format.

In this paper, we study the linear span of singular $k$-tuples of a generic tensor. Its dimension also depends only on the tensor format. In particular, we concentrate on special order three tensors and order-$k$ tensors of format $(2,\ldots,2,n)$. As a consequence, if again we fix the first $k-1$ dimensions $n_i$ and let $n_k$ increase, we show that in these special formats, the dimension of the linear span stabilizes as well, but at some concise non-sub-boundary format. We conjecture that this phenomenon holds for an arbitrary format with  $k>3$.
Finally, we provide equations for the linear span of singular triples of a generic order three tensor $T$ of some special non-sub-boundary format. From these equations, we conclude that $T$ belongs to the linear span of its singular triples, and we conjecture that this is the case for every tensor format.
\end{abstract}

\section{Introduction}\label{sec: intro}

A {\em singular $k$-tuple} of an order-$k$ tensor is the generalization of the notion of singular pair of a rectangular matrix. Singular $k$-tuples preserve important properties of singular pairs. For instance, in the problem of minimizing the distance between a given tensor $T$ and the algebraic variety of rank-one tensors, the singular $k$-tuples of $T$ correspond to the constrained critical points of the distance function. Therefore, they essentially provide an answer to the so-called {\em best rank-one approximation problem} for $T$ \cite{L}.

In Definition \ref{def: Z_T}, we introduce the projective variety $Z_T$ of rank-one tensors corresponding to the singular $k$-tuples of a given tensor $T$.
When $T$ is sufficiently generic, the variety $Z_T$ is zero-dimensional and consists of simple points. What is more, the variety $Z_T$ is {\em degenerate}, namely it is contained in some proper subspace of the ambient tensor space. Therefore, our primary goal is to study the projective span $\langle Z_T\rangle$ of the set $Z_T$.

Recently, the linear space $\langle Z_T\rangle$ has been compared to another important linear space associated with $T$.
In particular, in \cite[Section 5.2]{OP} the authors introduced the {\em singular space} $H_T$ of a tensor $T$. In Definition \ref{def: critical space} we call it the {\em critical space} as in \cite[Definition 2.8]{DOT}.
Due to its relevance in Euclidean distance optimization, more recently Ottaviani \cite{ottaviani2022critical} defined critical spaces of algebraic varieties invariant for the action of a Lie subgroup of the orthogonal group. In few words, the equations of $H_T$ can be obtained from the equations defining singular $k$-tuples without restricting to rank-one solutions.

In \cite[Proposition 2.12]{DOT}, the authors show that the critical space $H_T$ of a generic tensor $T$ contains all the best rank-$k$ approximations of $T$. Furthermore, they show in \cite[Proposition 3.6]{DOT} that, if the ambient tensor space is of {\em sub-boundary format} (see Definition \ref{def: boundary format}), then the projectivization of $H_T$ coincides with the span $\langle Z_T\rangle$ of the singular $k$-tuples of $T$.
In particular, this identification makes the problem of writing equations for $\langle Z_T\rangle$ very easy. As an immediate consequence, the tensor $T$ itself belongs to $\langle Z_T\rangle$. 

The techniques used in \cite{DOT} are nontrivial to extend when the boundary format condition is relaxed. More precisely, the vanishing of the cohomology spaces in \cite[Lemma 3.2]{DOT} does not hold anymore. Nevertheless, the authors observe that still $T\in\langle Z_T\rangle$ in the tensor format $(2,2,4)$, although the subspace $\langle Z_T\rangle$ has codimension one in the projectivized critical space. This suggests that, beyond the boundary format, the singular $k$-tuples satisfy extra linear relations than the ones of $H_T$.

Our motivating problem is finding which are the extra linear relations satisfied by the singular $k$-tuples when the boundary format condition is dropped.
On one hand, we perform the cohomology computations presented in \cite{DOT} for some ``defective'' tensor formats to estimate the dimensional gap between $\langle Z_T\rangle$ and the projectivization of $H_T$. On the other hand, in some examples we provide explicitly the equations of $\langle Z_T\rangle$ which are linearly independent from the ones of $H_T$, and our cohomology computations allow us to guarantee that the new equations are sufficient to obtain $\langle Z_T\rangle$. This part of the paper is closely related to the characterization of determinantal relations among singular $k$-tuples that is studied in the recent paper \cite{beorchia2022equations}.

More precisely, in this paper we present results for the order $\ell+1$ tensor format $(2,\ldots,2,n)$ as well as for the order three tensor format $(2,3,n)$. We are currently working on generalizations of the presented results to any format.
In Lemmas \ref{lemma: coh1}, \ref{lemma: coh2} and \ref{lemma: coh3} we study the vanishing of the cohomology spaces used in \cite{DOT} for the order $\ell+1$ tensor format $(2,\ldots,2,n)$ and we extend their results beyond the boundary format. Our first main result is Theorem \ref{thm: 2...2k+2}, where we show that for $\ell\ge 4$ and $n=\ell+2$ it still holds that $H_T=\langle Z_T\rangle$. With similar techniques we derive Theorem \ref{thm: format 23n} that estimates the dimension of $\langle Z_T\rangle$ in the order three format $(2,3,n)$. Furthermore, in this format and in the format $(2,2,n)$ we study the extra relations satisfied by $\langle Z_T\rangle$ that give its defective dimension when compared with the critical space $H_T$ for the tensor formats $(2,2,n)$ and $(2,3,n)$. This allows us to show in Theorem \ref{thm: membership} that $T\in \langle Z_T\rangle$. We confirm numerically our results with a Julia code for the computation of singular $k$-tuples for any tensor format.

Our results have another interpretation. The second author showed in \cite[Theorem 1.3]{turatti2021tensors} that the generic fiber of the rational map sending a tensor $T$ to the zero dimensional scheme of its singular $k$-tuples consists only of $T$, provided that the boundary format condition is satisfied. In Theorem \ref{thm: fiberbf} we extend this fact to the above-mentioned ``defective'' tensor formats assuming Conjecture \ref{con: Tspan} holds. Our argument relies only on the fact that $T\in \langle Z_T\rangle$. This shows that solving the membership problem $T\in \langle Z_T\rangle$ leads to a full generalization of \cite[Theorem 1.3]{turatti2021tensors}. Corollary \ref{cor: fiber} gives a concrete generalization of \cite[Theorem 1.3]{turatti2021tensors} for some tensor formats.  

\medskip
Our paper is structured as follows. In Section \ref{sec: prelim} we set up our notations and recall the definition of singular tuples of tensors as well as some useful known results. In Section \ref{sec: cohomology prelim} we describe the cohomology tools used to compute the dimension of the span of singular tuples. In Section \ref{sec: def Z_T and H_T} we introduce the span of singular tuples as well as the critical space of a tensor, and we review the state of the art about their relations. Section \ref{sec: cohomology} is the core of our paper, where we compute the dimension of the span of singular tuples beyond the boundary format. In Section \ref{sec: relations} we derive explicit equations for the span of singular tuples in special formats, which allow us to conclude that, in those formats, the tensor $T$ belongs to the span of its singular tuples.
{Finally, in Section \ref{sec: julia} we present code implemented in Julia for the numerical computation of the singular vector tuples of a tensor for any format, we also provide a couple of examples that hopefully will motivate further research on this topic.}

\section{Preliminaries on singular tuples of tensors}\label{sec: prelim}

\begin{Notation}
We often use the shorthand $[k]$ to denote the set of indices $\{1,\ldots,k\}$. Throughout the paper, if not specified we denote by ${\bf j}$ a vector $(j_1,\ldots,j_k)$ of $k$ variables and we set $|{\bf j}|\coloneqq j_1+\cdots+j_k$. Define ${\bf 1} = (1,\ldots,1)\in \NN^k$ and, for $m\in \NN$, let $m{\bf 1} = (m,\ldots, m)\in \NN^k$. 
\end{Notation}

For every $i\in[k]$ we consider an $n_i$-dimensional vector space $V_i$ over the field $\F=\R$ or $\F=\C$. If $\F=\R$, then we prefer the notation $V_i^\mR$.
We denote by $V$ the tensor product $\bigotimes_{i=1}^k V_i$.
This is the space of {\em tensors of format $\nn=(n_1,\ldots,n_k)$}.

\begin{Definition}\label{def: Segre embedding}
A tensor $T\in V$ is of {\it rank-one} (or {\it decomposable}) if $T = \xx_1\otimes\cdots\otimes \xx_k$ for some vectors $\xx_{j}\in V_j$ for all $j\in[k]$. Tensors of rank at most one in $V$ form the affine cone over the {\it Segre variety of format $n_1\times \cdots \times n_k$}, that is the image of the projective morphism
\[
\mathrm{Seg}\colon \PP(V_1)\times \cdots \times \PP(V_k)\to\PP(V)
\]
defined by $\mathrm{Seg}([\xx_1],\ldots,[\xx_k])\coloneqq[\xx_1\otimes\cdots\otimes \xx_k]$ for all non-zero $\xx_j\in V_j$. 
\end{Definition}

Throughout the paper, we adopt the shorthand $\PP=\mathrm{Seg}(\PP(V_1)\times \cdots \times \PP(V_k))$ to indicate the Segre variety introduced before. Furthermore, we abuse notation identifying a tensor $T\in V$ with its class in the projective space $\PP(V)$.

On each projective space $\PP(V_i)$ we fix a smooth projective quadric hypersurface $Q_i=\mathcal{V}(q_i)$, where $q_i$ is the homogeneous polynomial in $\C[x_{i,1},\ldots,x_{i,n_i}]_2$ associated to a positive definite real quadratic form $q_i^\mR\colon V_i^\mR\to\R$. We refer to $Q_i$ as the {\em isotropic quadric} in the $i$-th factor $\PP(V_i)$. We will always assume that $q_i^\mR(\xx_i)=x_{i,1}^2+\cdots+x_{1,n_i}^2$ for all $i\in[k]$.

\begin{Definition}\label{def: Frobenius inner product}
The {\it Frobenius (or Bombieri-Weyl) inner product} of two complex decomposable tensors $T = \xx_1\otimes\cdots\otimes \xx_k$ and $T' = \yy_1\otimes\cdots\otimes \yy_k$ of $V$ is
\begin{equation}\label{eq: Frobenius inner product for tensors}
q_F(T, T')\coloneqq q_1(\xx_1, \yy_1)\cdots q_k(\xx_k, \yy_k)\,,
\end{equation}
and it is naturally extended to every vector in $V$.
We identify all the vector spaces with their duals using the Frobenius inner product.
\end{Definition}

\begin{Definition}\label{def: singular vector tuple}
Let $T\in V$. A {\em singular (vector) $k$-tuple} of $T$ is a $k$-tuple $(\xx_1,\ldots,\xx_k)$ of non-zero vectors $\xx_i\in V_i$ such that
\begin{equation}\label{eq: def singular vector tuple matrix}
\mathrm{rank}
\begin{pmatrix}
T(\xx_1\otimes\cdots\otimes \xx_{i-1}\otimes \xx_{i+1}\otimes\cdots\otimes \xx_k)\\
\xx_i
\end{pmatrix}
\le 1\quad\forall\,i\in[k]\,,
\end{equation}
where
\begin{equation}\label{eq: def contraction}
T(\xx_1\otimes\cdots\otimes \xx_{i-1}\otimes \xx_{i+1}\otimes\cdots\otimes \xx_k)\coloneqq\sum_{j_\ell\in[n_\ell]}t_{j_1\cdots\,j_i\cdots\,j_k}\,x_{1,j_1}\cdots \widehat{x_{i,j_i}}\cdots x_{k,j_k}
\end{equation}
is the tensor contraction of $T=(t_{j_1,\ldots,j_k})$ with respect to $\xx_1\otimes\cdots\otimes \xx_{i-1}\otimes \xx_{i+1}\otimes\cdots\otimes \xx_k$. The symbol $\widehat{x_{i,j_i}}$ in \eqref{eq: def contraction} means that the variable $x_{i,j_i}$ is omitted in the product. 
If we interpret $T$ as a multi-homogeneous polynomial in the coordinates of each vector $\xx_i$, then the previous tensor contraction corresponds to the gradient $\nabla_iT$ with respect to the vector $\xx_i=(x_{i,1},\ldots,x_{i,n_i})$.

A singular $k$-tuple $(\xx_1,\ldots,\xx_k)$ is {\em normalized} if $q_i(\xx_{i})=1$ for all $i\in[k]$. A singular $k$-tuple $(\xx_1,\ldots,\xx_k)$ is {\em isotropic} if $q_i(\xx_{i})=0$ for some $i\in[k]$.
\end{Definition}

The number of singular $k$-tuples of a tensor $T$ of format $\nn$ is constant if the tensor $T$ is generic. It is computed in the following theorem.

\begin{Theorem}{\cite[Theorem 1]{FO}}\label{thm: FO formula}
Let $T\in V$ be a generic tensor. Then $T$ has exactly $\mathrm{ed}(\nn)$ simple singular
tuples, where $\mathrm{ed}(\nn)$ equals the coefficient of the monomial $h_1^{n_1-1}\cdots h_k^{n_k-1}$ in the polynomial
\[
\prod_{i=1}^k\frac{\widehat{h}_i^{n_i}-h_i^{n_i}}{\widehat{h}_i-h_i},\quad\widehat{h}_i\coloneqq\sum_{j\neq i}^k h_j\,.
\]
The number $\mathrm{ed}(\nn)$ coincides with the ED degree of the Segre variety $\PP\subset\PP(V)$ with respect to the Frobenius inner product in $V$.
\end{Theorem}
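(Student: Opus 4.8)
The plan is to count the singular $k$-tuples as the zeros of a section of a vector bundle of the expected rank on the multiprojective space $X\coloneqq\PP(V_1)\times\cdots\times\PP(V_k)$, evaluate the resulting top Chern number, and then identify it with the ED degree via the Lagrange-multiplier description of Euclidean-distance critical points. On each factor let $\sQ_i$ be the tautological quotient bundle, of rank $n_i-1$, fitting into $0\to\sO_{\PP(V_i)}(-1)\to V_i\otimes\sO_{\PP(V_i)}\to\sQ_i\to 0$, so that its fibre over $[\xx_i]$ is $V_i/\langle\xx_i\rangle$; I keep the same symbol for its pullback to $X$. Write $h_j$ for the hyperplane class of the $j$-th factor, and let $\sL_i$ be the line bundle on $X$ with $c_1(\sL_i)=\widehat{h}_i=\sum_{j\neq i}h_j$. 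After identifying $V_i$ with its dual via the Frobenius form, the contraction $\nabla_iT$ is multilinear of degree one in each $\xx_j$ with $j\neq i$ and of degree zero in $\xx_i$, hence defines a global section of $V_i\otimes\sL_i$; composing with $V_i\otimes\sO_X\twoheadrightarrow\sQ_i$ gives a section $s_{i}\in H^0(X,\sQ_i\otimes\sL_i)$ that vanishes at $([\xx_1],\ldots,[\xx_k])$ exactly when $\nabla_iT$ is proportional to $\xx_i$, i.e. when \eqref{eq: def singular vector tuple matrix} holds at index $i$. Therefore the singular tuples of $T$ are the zeros of $s_T\coloneqq(s_1,\ldots,s_k)$, a section of $E\coloneqq\bigoplus_{i=1}^k\sQ_i\otimes\sL_i$, and $\operatorname{rank}E=\sum_{i=1}^k(n_i-1)=\dim X$.

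Next I would justify the genericity statement. The crucial point is that the linear map $V\to H^0(X,E)$, $T\mapsto s_T$, induces a surjection $V\otimes\sO_X\twoheadrightarrow E$: given a point of $X$ and a prescribed value in the fibre $\bigoplus_iV_i/\langle\xx_i\rangle$, one exhibits by inspection a tensor whose contractions realize the prescribed residues modulo $\langle\xx_i\rangle$ for all $i$ at once. A Bertini-type transversality theorem for such families of sections then shows that, for generic $T$, the section $s_T$ meets the zero section transversally, so that its zero locus is smooth of pure codimension $\operatorname{rank}E=\dim X$, hence a reduced finite set of simple points. Consequently the number of singular tuples of a generic $T$ equals $\int_X c_{\dim X}(E)$.

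It then remains to compute this Chern number. From the tautological sequence, $c(\sQ_i)=(1-h_i)^{-1}$ in $A^\ast(\PP(V_i))$, so $c_j(\sQ_i)=h_i^{\,j}$ for $0\le j\le n_i-1$; twisting by $\sL_i$ gives
\[
c_{n_i-1}\bigl(\sQ_i\otimes\sL_i\bigr)=\sum_{j=0}^{n_i-1}c_j(\sQ_i)\,\widehat{h}_i^{\,n_i-1-j}=\sum_{j=0}^{n_i-1}h_i^{\,j}\,\widehat{h}_i^{\,n_i-1-j}=\frac{\widehat{h}_i^{\,n_i}-h_i^{\,n_i}}{\widehat{h}_i-h_i}\,.
\]
Since $c(E)=\prod_{i=1}^k c(\sQ_i\otimes\sL_i)$ and the $i$-th factor is a sum of homogeneous terms of degrees $0,1,\ldots,n_i-1$, the only way to reach total degree $\sum_i(n_i-1)=\dim X$ is to take the top Chern class of every factor, so
\[
c_{\dim X}(E)=\prod_{i=1}^k\frac{\widehat{h}_i^{\,n_i}-h_i^{\,n_i}}{\widehat{h}_i-h_i}\,.
\]
Finally $\int_X$ extracts the coefficient of the monomial $h_1^{\,n_1-1}\cdots h_k^{\,n_k-1}$ (the unique degree-$\dim X$ monomial surviving in $A^\ast(X)=\Z[h_1,\ldots,h_k]/(h_1^{\,n_1},\ldots,h_k^{\,n_k})$, with $\int_X h_1^{\,n_1-1}\cdots h_k^{\,n_k-1}=1$), which is precisely $\mathrm{ed}(\nn)$. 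For the identification with the ED degree of $\PP$, one uses the Lagrange conditions for critical points of $q_F(x-T,x-T)$ on the smooth locus of the Segre cone: at a rank-one point $x=\xx_1\otimes\cdots\otimes\xx_k$, multiplicativity of $q_F$ turns the orthogonality of $x-T$ to the tangent space of the Segre cone at $x$ into exactly the singular tuple equations, and for generic $T$ all such critical points are simple, lie on the smooth locus, and correspond to the (normalized) singular tuples.

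The step I expect to be the main obstacle is the transversality input of the second paragraph: one must guarantee that a generic $T$ yields a section whose zero scheme is genuinely reduced and $0$-dimensional, rather than merely having the expected length $\mathrm{ed}(\nn)$ as a possibly non-reduced or excess-dimensional scheme. The clean route is the surjectivity of $V\otimes\sO_X\to E$, whose verification is elementary but requires care in writing down the needed tensors; the Chern-class bookkeeping of the third paragraph is then routine once $E$ and the twist by $\sL_i$ have been pinned down correctly.
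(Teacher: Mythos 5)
Your proposal is correct and is essentially the proof of the cited source \cite{FO}: the paper itself only quotes this theorem, but the bundle you construct is exactly the $\sE=\bigoplus_i(\pi_i^\ast\mathcal{Q}_i)\otimes\sO(1,\ldots,0,\ldots,1)$ that the paper recalls in Section 5, and your Chern-class computation and extraction of the coefficient of $h_1^{n_1-1}\cdots h_k^{n_k-1}$ reproduce the Friedland--Ottaviani argument. The transversality point you flag is indeed the delicate step (the sections $s_T$ span only a diagonal subspace of $H^0(X,\sE)$, and surjectivity of the evaluation map must be checked including at isotropic points), but it is handled in \cite{FO} exactly along the lines you indicate.
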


We refer to \cite{DHOST} for more details on ED degrees of algebraic varieties. 

\begin{Definition}
A tensor $T\in V$ is said to be {\em concise} if there is no proper linear subspace $L_i$ such that $T\in V_1\otimes\dots\otimes L_i\otimes \dots\otimes V_k$ for every $i\in[k]$. The tensor space $V$ is {\em concise} if there exists a tensor $T\in V$ such that $T$ is concise. 
\end{Definition}

If $V$ is non-concise, then for every tensor $T\in V$ there exist linear subspaces $L_i\subset V_i$ such that $T\in L=\bigotimes_{i=1}^k L_i$ and $L$ is a concise tensor space. Moreover, it is a classical result that the space $V$ is concise if and only if $n_i\leq \prod_{j\neq i}n_j$ for every $i\in[k]$.

Theorem \ref{thm: FO formula} tells us that the number $\mathrm{ed}(\nn)$ of singular $k$-tuples of a generic tensor is finite, and its value depends only on the format $\nn$. To study the number $\mathrm{ed}(\nn)$ and later the linear span of singular $k$-tuples, we need to introduce the following tensor format terminology.

\begin{Definition}\label{def: boundary format}
Consider a tensor space $V$ of format $\nn=(n_1,\ldots,n_k)$. Then $\nn$ is
\begin{enumerate}
    \item a {\em sub-boundary format} if for all $i\in[k]$ we have $n_i\le 1+\sum_{j\neq i}(n_j-1)$.
    \item a {\em boundary format} if for some $i\in[k]$ we have $n_i=1+\sum_{j\neq i}(n_j-1)$.
    \item a {\em concise format} if for all $i\in[k]$ we have $n_i\le\prod_{j\neq i}n_j$. Otherwise we say that $\nn$ is a {\em non-concise format}. In particular, if $\nn$ is a non-concise format, then for every tensor $T\in V$ there exists a tensor subspace $V'\subset V$ of concise format $\nn'=(n_1',\ldots,n_k')$ such that $T\in V'$.
\end{enumerate}
\end{Definition}

We recall from \cite[Chapter 1]{GKZ} the notion of dual variety of a projective variety.

\begin{Definition}\label{def: dual varieties}
Let $X\subset \PP(W)$ be a projective variety, where $\dim(W)=n$.
Its {\it dual variety} $X^{\vee}\subset\PP(W^*)$ is the closure of all hyperplanes tangent to $X$ at some smooth point.
The {\it dual defect} of $X$ is the natural number $\delta_X \coloneqq n-2-\dim(X^{\vee})$. A variety $X$ is said to be {\it dual defective} if $\delta_X>0$. Otherwise, it is {\it dual non-defective}. When $X = \PP(W)$, taken with its tautological embedding into itself, $X^{\vee} = \emptyset$ and $\mathrm{codim}(X^{\vee}) = n$. 
\end{Definition}

Of particular interest is the characterization of non-defectiveness of the Segre variety $\PP\subset\PP(V)$ given in the following result.

\begin{Theorem}{\cite[Chapter 14, Theorem 1.3]{GKZ}}
Let $\PP\subset\PP(V)$ be the Segre variety of format $\nn=(n_1,\ldots,n_k)$.
Then $\PP$ is dual non-defective if and only if $V$ is of sub-boundary format.
\end{Theorem}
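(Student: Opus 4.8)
This is the classical Gelfand--Kapranov--Zelevinsky criterion, so the plan is to reprove it through the conormal variety. Write $n\coloneqq\dim V=\prod_i n_i$, and let $\mathcal N\subset\PP(V)\times\PP(V^*)$ be the conormal variety of $\PP$: the closure of the pairs $(p,[H])$ with $p$ a smooth point of $\PP$ and $H$ vanishing on the affine tangent space $\widehat T_p\PP$. Since $\PP\cong\PP(V_1)\times\cdots\times\PP(V_k)$ is smooth, $\mathcal N$ is a projective bundle over it, hence irreducible of dimension $\dim\PP+\codim\PP-1=n-2$, and $\PP^\vee=\overline{\pi(\mathcal N)}$ with $\pi$ the projection to $\PP(V^*)$. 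Therefore
\[
\delta_{\PP}=(n-2)-\dim\PP^\vee=\dim\big(\text{generic fiber of }\pi\big)=\dim C_H,
\]
where $C_H$ is the \emph{contact locus} of a generic $H\in\PP^\vee$, that is, the set of tuples $([\xx_1],\ldots,[\xx_k])$ for which $H$ is tangent to $\PP$ at $[\xx_1\otimes\cdots\otimes\xx_k]$. Unwinding $\widehat T_p\PP$, this tangency is equivalent to the vanishing in $V_i^*$ of the contraction $H(\xx_1\otimes\cdots\otimes\widehat{\xx_i}\otimes\cdots\otimes\xx_k)$ for all $i\in[k]$, so everything reduces to bounding the dimension of this ``kernel-tuple'' locus of $H$.

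For the implication ``dual non-defective $\Rightarrow$ sub-boundary format'' I argue the contrapositive, and in fact show $\delta_{\PP}\ge 1$ for \emph{every} non-sub-boundary format. Reordering so that $n_1$ is a largest coordinate, ``not sub-boundary'' reads $n_1-1>\sum_{j\ge 2}(n_j-1)$, hence $n_1-\sum_{j\ge 2}(n_j-1)\ge 2$. Fix any $p=[\xx_1\otimes\cdots\otimes\xx_k]\in\PP$ and any $H$ tangent there, and consider the linear map $\Phi\colon V_1\to\bigoplus_{j\ge 2}V_j^*$ sending $\xx_1'$ to the tuple of contractions $H(\xx_1'\otimes\xx_2\otimes\cdots\otimes\widehat{\xx_j}\otimes\cdots\otimes\xx_k)$, $j\ge 2$. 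Pairing the $j$-th component with $\xx_j$ gives $H(\xx_1'\otimes\xx_2\otimes\cdots\otimes\xx_k)=\langle\xx_1',H(\xx_2\otimes\cdots\otimes\xx_k)\rangle$, which vanishes for all $\xx_1'$ because tangency of $H$ at $p$ forces $H(\xx_2\otimes\cdots\otimes\xx_k)=0$ in $V_1^*$; thus $\operatorname{Im}\Phi\subseteq\bigoplus_{j\ge 2}\operatorname{Ann}(\xx_j)$, a subspace of dimension $\sum_{j\ge 2}(n_j-1)$. Hence $\dim\ker\Phi\ge n_1-\sum_{j\ge 2}(n_j-1)\ge 2$, and since the $i=1$ condition does not involve the first vector, $([\xx_1'],[\xx_2],\ldots,[\xx_k])\in C_H$ for every $\xx_1'\in\ker\Phi$. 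So $C_H$ contains $\PP(\ker\Phi)\times\{([\xx_2],\ldots,[\xx_k])\}$, of dimension at least $1$, whence $\delta_{\PP}\ge 1$ and $\PP$ is dual defective.

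For the converse, ``sub-boundary format $\Rightarrow$ dual non-defective'', I would group the last $k-1$ factors: put $W\coloneqq\bigotimes_{j\ge 2}V_j$, $N\coloneqq\dim W$, $Y\coloneqq\mathrm{Seg}\big(\prod_{j\ge 2}\PP(V_j)\big)\subset\PP(W)$, so $\PP=\mathrm{Seg}(\PP(V_1)\times Y)$, $\widehat T_{[\xx_1\otimes w]}\PP=\langle\xx_1\rangle\otimes\widehat T_wY+V_1\otimes\langle w\rangle$, and the sub-boundary hypothesis (applied to the grouped-out index) gives $n_1\le N$ and $\dim Y=\sum_{j\ge 2}(n_j-1)\ge n_1-1$. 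Reading $H\in V_1^*\otimes W^*$ as a pair of linear maps, tangency at $[\xx_1\otimes w]$ becomes $w\in K_H\coloneqq\ker(H\colon W\to V_1^*)$ together with $H(\xx_1)\in\widehat N_wY\coloneqq(\widehat T_wY)^\perp$; for generic $H$ the map $H\colon V_1\to W^*$ is injective and $\dim K_H=N-n_1$. One then studies the incidence variety
\[
\mathcal J=\big\{(H,[w])\ :\ [w]\in Y,\ H(w)=0,\ \operatorname{Im}(H\colon V_1\to W^*)\cap\widehat N_wY\ne 0\big\},
\]
whose fiber over $[w]$ is cut by the $n_1$ linear conditions $H(w)=0$ and by the intersection condition of expected codimension $c\coloneqq\dim Y-n_1+1\ge 0$; bookkeeping with $\dim Y=n_1-1+c$ yields $\dim\mathcal J=n-2$, so $\PP^\vee$ is a hypersurface, i.e. $\delta_{\PP}=0$, \emph{provided} $\mathcal J\to\PP^\vee$ is generically finite. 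That finiteness — for generic $H\in\PP^\vee$ only finitely many $[w]\in\PP(K_H)\cap Y$ satisfy the intersection condition, and each one pins down a unique $[\xx_1]$, since $\operatorname{Im}(H\colon V_1\to W^*)$ and $\widehat N_wY$ meet inside $w^\perp$ in the minimal dimension $1$ while $H|_{V_1}$ is injective — is the real content. In the boundary-format case $c=0$ this simplifies: the intersection condition is automatic, $\PP^\vee$ is visibly $\{H\ :\ \PP(K_H)\cap Y\ne\emptyset\}$, of codimension one, and one is reduced to the matrix-type statement that the special linear spaces $\PP(K_H)$ meeting $Y$ generically meet it in finitely many points; the general sub-boundary case is the same count carrying the excess $c$. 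The main obstacle is precisely this last direction: upgrading ``$C_H$ has the expected dimension'' to a proof requires genuine transversality and irreducibility input about how the fixed Segre $Y$ meets the moving linear space $\PP(K_H)$, and about the second tangency condition cutting that intersection properly — the technical heart of \cite[Chapter~14]{GKZ}, which one would either reproduce or invoke.
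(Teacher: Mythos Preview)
The paper does not prove this theorem: it is stated with a citation to \cite[Chapter~14, Theorem~1.3]{GKZ} and no argument is given. So there is no ``paper's own proof'' to compare against; your proposal stands on its own.

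Your contrapositive for the direction ``dual non-defective $\Rightarrow$ sub-boundary'' is clean and correct. The map $\Phi\colon V_1\to\bigoplus_{j\ge 2}V_j^*$ does land in $\bigoplus_{j\ge 2}\operatorname{Ann}(\xx_j)$ once $H$ is tangent at $p$, the rank--nullity bound gives $\dim\ker\Phi\ge n_1-\sum_{j\ge 2}(n_j-1)\ge 2$, and since the $i=1$ tangency condition does not see the first vector while the $j\ge 2$ conditions are exactly $\Phi(\xx_1')=0$, you get a copy of $\PP(\ker\Phi)$ inside every contact locus. That forces $\delta_{\PP}\ge 1$.

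For the converse you have set up the right incidence picture and the dimension count $\dim\mathcal J=n-2$ is correct (both $\operatorname{Im}(H\colon V_1\to W^*)$ and $\widehat N_wY$ sit in $w^\perp$, so the expected-codimension bookkeeping goes through). But, as you yourself flag, the argument is not a proof: you assume without justification that the fiber of $\mathcal J$ over a generic $[w]\in Y$ has exactly the expected dimension, and then that $\mathcal J\to\PP^\vee$ is generically finite. Both statements are plausible and both are the actual content of the GKZ result; writing ``one would either reproduce or invoke'' the technical heart of \cite{GKZ} is an honest admission that this half remains a sketch. If you want a self-contained argument, the standard route is the one in GKZ via the Cayley--Koszul complex (or, equivalently, via explicit degeneration to a boundary format where the contact locus is visibly finite), rather than a bare dimension count on $\mathcal J$.
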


\begin{Definition}\label{def: hyperdeterminants}
Let $\PP\subset\PP(V)$ be the Segre variety of format $\nn=(n_1,\ldots,n_k)$.
When $\PP$ is dual non-defective, the polynomial equation defining the hypersurface $\PP^\vee\subset\PP(V^*)$ (up to scalar multiples) is called the {\it hyperdeterminant} of format $\nn$ and is denoted by $\mathrm{Det}$. The hyperdeterminant of format $\nn=(n,\ldots,n)$ is said to be {\it hypercubical}. 
\end{Definition}

Suppose now that, given a tensor format $\nn=(n_1,\ldots,n_k)$, the last dimension $n_k$ is sent to infinity. One verifies from Theorem \ref{thm: FO formula} that the value $\mathrm{ed}(\nn)$ stabilizes as long as $n_k$ becomes equal to $n_1+\cdots+n_{k-1}$, namely when $\nn$ becomes a boundary format. This combinatorial phenomenon has a deeper geometric counterpart, which is established by the following theorem.

\begin{Theorem}{\cite[Theorem 4.13]{OSV}}\label{thm: specialization}
Let $N=1+\sum_{i=1}^{k-1}(n_i-1)$ and $m\ge N$. Let $\mathrm{Det}$ be the hyperdeterminant in the boundary format $(n_1,\ldots,n_{k-1},N)$.
Consider a tensor $T\in \bigotimes_{i=1}^{k-1}V_i\otimes\C^{N+1}\subset \bigotimes_{i=1}^{k-1}V_i\otimes\C^{m+1}$ with $\mathrm{Det}(T)\neq 0$. Then the critical points of $T$ on the Segre variety $\prod_{i=1}^{k-1}\PP(V_i)\times\PP(\C^{m+1})$ lie in the subvariety $\prod_{i=1}^{k-1}\PP(V_i)\times\PP(\C^{N+1})$.
\end{Theorem}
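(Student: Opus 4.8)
The plan is to exploit the slice decomposition of $T$ along its last tensor factor. Fix the standard basis $e_1,\dots,e_{m+1}$ of $\C^{m+1}$, put $W\coloneqq V_1\otimes\cdots\otimes V_{k-1}$, and write $T=\sum_{j=1}^{m+1}T_j\otimes e_j$ with slices $T_j\in W$; the hypothesis $T\in W\otimes\C^{N+1}$ says that $T_j=0$ for all $j>N+1$, so that $T$, viewed inside the boundary-format space $W\otimes\C^{N+1}$, is the tensor $T'\coloneqq\sum_{j=1}^{N+1}T_j\otimes e_j$, with $\mathrm{Det}(T')\neq 0$ by hypothesis. Let $(\xx_1,\dots,\xx_k)$ be a singular $k$-tuple of $T$ on $\prod_{i=1}^{k-1}\PP(V_i)\times\PP(\C^{m+1})$, equivalently a critical point of the distance from $T$, with $\xx_k=(x_{k,1},\dots,x_{k,m+1})$, and set $v\coloneqq\xx_1\otimes\cdots\otimes\xx_{k-1}\in W\setminus\{0\}$. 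By \eqref{eq: def singular vector tuple matrix} with $i=k$ and \eqref{eq: def contraction}, the contraction $T(v)=(\langle T_1,v\rangle,\dots,\langle T_{m+1},v\rangle)\in\C^{m+1}$ and $\xx_k$ are linearly dependent; since $T_j=0$ for $j>N+1$, the last $m-N$ coordinates of $T(v)$ vanish. Hence either $T(v)\neq 0$ --- in which case $\xx_k$ is a nonzero scalar multiple of $T(v)$, so $\xx_k\in\C^{N+1}$ and the tuple lies in the subvariety as claimed --- or $T(v)=0$, i.e.\ $\langle T_j,v\rangle=0$ for all $j$. The theorem thus reduces to the following rigidity statement, which I would isolate as a lemma: \emph{if a nonzero rank-one tensor $v=\xx_1\otimes\cdots\otimes\xx_{k-1}$ with all $\xx_i\neq 0$ is orthogonal to every slice of a boundary-format tensor $T'\in W\otimes\C^{N+1}$, then $\mathrm{Det}(T')=0$}; this contradicts $\mathrm{Det}(T')\neq0$ and excludes the second case.

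To prove the rigidity lemma I would promote the single vanishing $v\perp\langle T_1,\dots,T_{N+1}\rangle$ to an entire smooth tangent direction of the Segre variety annihilated by $T'$. For $i\in[k-1]$ consider the contraction map $L_i\colon\C^{N+1}\to V_i$, $L_i(w)\coloneqq T'(\xx_1\otimes\cdots\otimes\widehat{\xx_i}\otimes\cdots\otimes\xx_{k-1}\otimes w)$. The crucial observation is that $L_i$ takes values in the hyperplane $\xx_i^{\perp}\subset V_i$: for every $w$ one has $\langle L_i(w),\xx_i\rangle=\langle T',v\otimes w\rangle=\sum_j w_j\langle T_j,v\rangle=0$. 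Hence $\mathrm{rank}\,L_i\le n_i-1$, and a dimension count gives $\dim\bigl(\bigcap_{i=1}^{k-1}\ker L_i\bigr)\ge (N+1)-\sum_{i=1}^{k-1}(n_i-1)\ge 1$, the last inequality being precisely the boundary-format relation defining $N$. Choosing a nonzero $w^{\ast}$ in this intersection, we get $T'(\xx_1\otimes\cdots\otimes\widehat{\xx_i}\otimes\cdots\otimes\xx_{k-1}\otimes w^{\ast})=0$ for every $i$, while also $T'(\xx_1\otimes\cdots\otimes\xx_{k-1})=(\langle T_j,v\rangle)_j=0$. These two families of vanishings are exactly the conditions that $T'$ annihilate the affine tangent space to the Segre cone in $W\otimes\C^{N+1}$ at $z\coloneqq\xx_1\otimes\cdots\otimes\xx_{k-1}\otimes w^{\ast}$ (they kill, respectively, the tangent directions along each $V_i$ and those along $\C^{N+1}$). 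Since all factors of $z$ are nonzero, $z$ is a smooth point of the cone, so $T'$ lies on a hyperplane tangent to the Segre variety at a smooth point, hence on its dual variety; by Definitions \ref{def: dual varieties} and \ref{def: hyperdeterminants} this means $\mathrm{Det}(T')=0$.

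I expect the only delicate step to be the hyperplane trick in the second paragraph. A priori the maps $L_i$ are merely valued in $V_i$, and the count $(N+1)-\sum_i n_i$ becomes negative for moderately large $k$, so without first knowing that the \emph{total} contractions $\langle T_j,v\rangle$ vanish one could not guarantee the common kernel vector $w^{\ast}$; it is precisely that total vanishing which confines each $L_i$ to a hyperplane and makes the dimension count close, the boundary-format hypothesis supplying exactly the slack $\ge1$. The remaining verifications are routine: $\xx_i^{\perp}$ is a genuine hyperplane even when $\xx_i$ is isotropic, since $q_i$ is nondegenerate and $\xx_i\neq0$; in the case $T(v)\neq0$ the conclusion $\xx_k\in\C^{N+1}$ is forced with no genericity by the rank-one condition in \eqref{eq: def singular vector tuple matrix}; and the isotropic quadric on $\C^{m+1}$ restricts to the one on $\C^{N+1}$, so that ``singular $k$-tuple'' is the same notion for the two nested Segre varieties and the statement does not depend on which ambient space one uses. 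Assembling these, every singular $k$-tuple of $T$ has its last vector in $\C^{N+1}$, proving the theorem.
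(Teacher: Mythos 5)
The paper states this result purely as a quotation of \cite[Theorem 4.13]{OSV} and supplies no proof of its own, so there is no internal argument to compare yours against; judged on its own terms, your proof is correct and complete. The reduction in your first paragraph is the right one: the $i=k$ rank condition in \eqref{eq: def singular vector tuple matrix} forces $\xx_k$ into the line spanned by $T(\xx_1\otimes\cdots\otimes\xx_{k-1})$, whose last $m-N$ coordinates vanish because the corresponding slices of $T$ do, so only the degenerate case $T(\xx_1\otimes\cdots\otimes\xx_{k-1})=0$ must be excluded. Your rigidity lemma handles it correctly: the total vanishing $\langle T_j,v\rangle=0$ confines each partial contraction $L_i$ to the hyperplane $\xx_i^{\perp}$ (a genuine hyperplane since $q_i$ is nondegenerate and $\xx_i\neq 0$), the codimension count $\mathrm{codim}\bigcap_i\ker L_i\le\sum_i(n_i-1)$ then yields a common kernel vector $w^{*}$, and the smooth Segre point $\xx_1\otimes\cdots\otimes\xx_{k-1}\otimes w^{*}$ witnesses that the hyperplane dual to $T$ is tangent to the Segre variety, i.e.\ $\mathrm{Det}(T)=0$. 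In effect you have reproved, in the one direction you need, the Gelfand--Kapranov--Zelevinsky non-degeneracy criterion for boundary-format hyperdeterminants (a boundary-format tensor has nonzero hyperdeterminant iff no nonzero rank-one tensor annihilates all of its slices); this is a legitimate, self-contained route, and it has the virtue of requiring no genericity of $T$ and of working for isotropic tuples, which the statement implicitly needs. The one thing you should make explicit is the indexing of the statement itself: as transcribed, $\mathrm{Det}$ is the hyperdeterminant of format $(n_1,\ldots,n_{k-1},N)$ with $N=1+\sum_{i=1}^{k-1}(n_i-1)$, hence a polynomial on $\bigotimes_{i=1}^{k-1}V_i\otimes\C^{N}$, while $T$ is placed in $\bigotimes_{i=1}^{k-1}V_i\otimes\C^{N+1}$, which is one step beyond boundary format. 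You silently resolve this by declaring $W\otimes\C^{N+1}$ to be ``the boundary-format space''; that is surely the intended reading (it matches the projective-dimension conventions of \cite{OSV}), and your kernel dimension count is insensitive to the off-by-one, but the final dual-variety step does require the last factor to have exactly the boundary dimension in order for $\PP^{\vee}$ to be the hypersurface cut out by $\mathrm{Det}$, so the normalization should be stated rather than assumed.
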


We know that the number of singular $k$-tuples of a tensor of format $\nn=(n_1,\ldots,n_k)$ stabilizes for sufficiently large $n_k$, more precisely when $\nn$ becomes a boundary format. This implies that also the linear span of singular $k$-tuples stabilizes for sufficiently large $n_k$. It is natural to ask when exactly this stabilization occur. In particular, in the upcoming sections we will address the following question: {\em does the dimension of the linear span defined by the singular tuples stabilize at a boundary format?} We will see that the answer is negative for many formats. Furthermore, we will observe that the dimension of this linear span will eventually stabilize for some value of $n_k$ such that $\nn$ is neither a boundary format nor the last concise format.

\section{Cohomological preliminaries}\label{sec: cohomology prelim}
We recall some classical concepts and results that we apply throughout the paper. We refer to \cite{Weyman} for more details.

\begin{Notation}
Let $\PP$ be the Segre variety of Definition \ref{def: Segre embedding}. For every $i\in[k]$, we consider the projection $\pi_i\colon\PP\to\PP(V_i)$. Furthermore, we denote by $\mathcal{Q}_i$ the quotient bundle on $\PP(V_i)$. The fiber of $\mathcal{Q}_i$ over $[\xx_i]\in\PP(V_i)$ is $V_i/\langle\xx_i\rangle$. For any vector bundle $\mathcal{B}$ on $\PP$, we use the shorthand $\mathcal{B}({\bf 1})$ to denote the tensorization $\mathcal{B}\otimes\sO({\bf 1})=\mathcal{B}\otimes\sO(1,\ldots,1)$.

In general, if $X\subset\PP(W)$ is any projective variety and $\mathcal{B}$ is a vector bundle on $X$, for all $i\ge 0$ we denote by $H^i(X,\mathcal{B})$ the $i$-th cohomology group of $\mathcal{B}$. If it is clear from the context, we also use the shorthand $H^i(\mathcal{B})$ and we call $h^i(\mathcal{B})\coloneqq\dim(H^i(\mathcal{B}))$.
\end{Notation}

\begin{Theorem}[K\"unneth's formula]\label{thm: Kunneth}
Consider a vector space $V$ of order-$k$ tensors and the Segre variety $\PP\subset\PP(V)$. For all $i\in[k]$ let $\mathcal{B}_i$ be a vector bundle on $\PP(V_i)$. Then for all $q\in \mathbb{Z}_{\ge 0}$
\begin{equation}\label{eq: Kunneth}
H^q\left(\PP,\bigotimes_{i=1}^k \pi_i^\ast \mathcal{B}_i\right)\cong \bigoplus_{|{\bf j}|=q}\bigotimes_{i=1}^k H^{j_i}(\PP(V_i), \mathcal{B}_i)\,.
\end{equation}
\end{Theorem}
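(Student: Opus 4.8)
The plan is to reduce this to the algebraic K\"unneth theorem for bounded complexes of vector spaces over a field. The Segre morphism identifies $\PP$ with the product $\PP(V_1)\times\cdots\times\PP(V_k)$, under which the sheaf $\bigotimes_{i=1}^k\pi_i^\ast\mathcal{B}_i$ becomes the external tensor product of the bundles $\mathcal{B}_i$. Since an external tensor product of $k$ bundles is also an external tensor product of two bundles — the first being an external tensor product of $\mathcal{B}_1,\dots,\mathcal{B}_{k-1}$ on $\PP(V_1)\times\cdots\times\PP(V_{k-1})$ — it suffices to establish the case $k=2$ and then induct on $k$.

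For $k=2$ I would compute cohomology with Čech complexes. On each $\PP(V_i)$ fix the standard affine cover $\mathcal{U}_i=\{U_{i,1},\dots,U_{i,n_i}\}$ with $U_{i,j}=\{x_{i,j}\neq 0\}$: all finite intersections of the $U_{i,j}$ are affine and $\mathcal{B}_i$ is locally free, so $C^\bullet(\mathcal{U}_i,\mathcal{B}_i)$ computes $H^\bullet(\PP(V_i),\mathcal{B}_i)$. The products $U_{1,a}\times U_{2,b}$ form an affine cover $\mathcal{U}$ of $\PP$ all of whose finite intersections are products of affine opens, hence affine, so the Čech complex $C^\bullet(\mathcal{U},\pi_1^\ast\mathcal{B}_1\otimes\pi_2^\ast\mathcal{B}_2)$ computes $H^\bullet(\PP,\pi_1^\ast\mathcal{B}_1\otimes\pi_2^\ast\mathcal{B}_2)$. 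The key point is the identification, for affine opens $W_i\subseteq\PP(V_i)$,
\[
\Gamma\big(W_1\times W_2,\;\pi_1^\ast\mathcal{B}_1\otimes\pi_2^\ast\mathcal{B}_2\big)\;=\;\Gamma(W_1,\mathcal{B}_1)\otimes_{\C}\Gamma(W_2,\mathcal{B}_2),
\]
which for locally free sheaves reduces, after local trivializations, to $\mathcal{O}(W_1\times W_2)=\mathcal{O}(W_1)\otimes_{\C}\mathcal{O}(W_2)$; it shows that $C^\bullet(\mathcal{U},\pi_1^\ast\mathcal{B}_1\otimes\pi_2^\ast\mathcal{B}_2)$ is quasi-isomorphic to the total complex of the double complex $(p,q)\mapsto C^p(\mathcal{U}_1,\mathcal{B}_1)\otimes_{\C}C^q(\mathcal{U}_2,\mathcal{B}_2)$. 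Applying the algebraic K\"unneth theorem to this tensor product of bounded complexes of $\C$-vector spaces — with no $\operatorname{Tor}$ correction, since $\C$ is a field — gives
\[
H^q\big(\PP,\pi_1^\ast\mathcal{B}_1\otimes\pi_2^\ast\mathcal{B}_2\big)\;\cong\;\bigoplus_{a+b=q}H^a(\PP(V_1),\mathcal{B}_1)\otimes_{\C}H^b(\PP(V_2),\mathcal{B}_2),
\]
and induction on $k$ yields \eqref{eq: Kunneth}.

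The one step demanding genuine care is the passage from the Čech complex of the external tensor product to the tensor product of the two individual Čech complexes, i.e.\ the compatibility of Čech cochains with products over affines; everything downstream is formal homological algebra over a field. If one prefers to avoid Čech methods, the same result follows by induction using the projection $p\colon\PP\to\PP(V_k)$: writing $\mathcal{G}$ for the external tensor product of $\mathcal{B}_1,\dots,\mathcal{B}_{k-1}$ on $X=\PP(V_1)\times\cdots\times\PP(V_{k-1})$ and $q\colon\PP\to X$ for the other projection, the projection formula gives $Rp_\ast(q^\ast\mathcal{G}\otimes p^\ast\mathcal{B}_k)\cong Rp_\ast(q^\ast\mathcal{G})\otimes\mathcal{B}_k$, flat base change along $\PP(V_k)\to\operatorname{Spec}\C$ identifies $R^bp_\ast(q^\ast\mathcal{G})$ with the trivial bundle of fiber $H^b(X,\mathcal{G})$, and the Leray spectral sequence degenerates at $E_2$ over a field. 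I would write up the Čech argument, since there the degeneration of the spectral sequence is automatic.
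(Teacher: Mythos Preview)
Your argument is correct and is the standard proof of the K\"unneth formula for sheaf cohomology of external tensor products over a field: the \v{C}ech double complex identification on a product of affine covers, followed by the algebraic K\"unneth theorem (with vanishing $\operatorname{Tor}$), is exactly how one does this, and the alternative via the projection formula and flat base change is equally valid. There is no gap.

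However, there is nothing to compare against: the paper does not prove this theorem. It is stated in Section~\ref{sec: cohomology prelim} as a classical result, with a general reference to \cite{Weyman} for the cohomological background, and is used as a black box in the subsequent computations. So your proposal supplies a proof where the paper supplies none.
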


Let $G$ be a semisimple simply connected group, let $P\subset G$ be a parabolic subgroup. Let $\Phi^+$ be the set of positive roots of $G$. Let $\delta=\sum \lambda_i$ be the sum of all the fundamental weights and let $\lambda$ be a weight. Let $E_\lambda$ be the homogeneous bundle arising from the irreducible representation of $P$ with highest weight $\lambda$ and $(\cdot,\cdot)$ be the Killing form.
\begin{Definition}
The {\em fundamental Weyl chamber} is the convex set
\begin{equation}\label{eq: Weyl chamber}
\mathcal C=\{\lambda\text{ is a weight} \mid (\lambda,\alpha)\ge0,\  \forall\ \alpha\in \Phi^+ \}.
\end{equation}
\end{Definition}

\begin{Definition}
The weight $\lambda$ is called {\em singular} if there exists a root $\alpha \in \Phi^+$ such that $(\lambda,\alpha)=0$. Otherwise, if $(\lambda,\alpha)\neq 0$ for all the roots $\alpha \in \Phi^+$, we say that $\lambda$ is {\em regular of index $p$} if there exist exactly $p$ roots $\alpha_1,\dots,\alpha_p\in\Phi^+$ such that $(\lambda,\alpha)<0$.
\end{Definition}

\begin{Theorem}[Bott]\label{thm: Bott}
The following are true:
\begin{enumerate}
    \item If $\lambda + \delta$ is singular, then $H^i(G/P,E_\lambda)=0$ for all $i$.
    \item If $\lambda+\delta $ is regular of index $p$, then $H^i(G/P,E_\lambda)=0$ for $i\neq p$. Furthermore $H^p(G/P,E_\lambda)=G_{w(\lambda+\delta)-\delta}$, where $w$ is the unique element of the fundamental Weyl chamber of G which is congruent to $\lambda+\delta$ under the action of the Weyl group.
\end{enumerate}
\end{Theorem}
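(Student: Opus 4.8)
The plan is to treat this as the classical Bott vanishing theorem and to proceed in two stages: first reduce the statement from $G/P$ to the full flag variety $G/B$, then prove the flag-variety case by Demazure's induction on the length of Weyl group elements, using the $\mathbb{P}^1$-bundles attached to minimal parabolics. For the reduction I would fix a Borel $B\subseteq P$ and consider the projection $p\colon G/B\to G/P$, whose fibers are the flag variety of the Levi factor of $P$. Writing $\sL_\lambda$ for the line bundle on $G/B$ attached to the character $\lambda$ of $B$, the hypothesis that $\lambda$ is the highest weight of an irreducible $P$-module makes $\lambda$ dominant for the Levi, so Borel--Weil on the fibers gives $p_\ast\sL_\lambda\cong E_\lambda$ and $R^jp_\ast\sL_\lambda=0$ for $j>0$; the Leray spectral sequence then degenerates and produces $H^i(G/P,E_\lambda)\cong H^i(G/B,\sL_\lambda)$ for every $i$. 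It therefore suffices to handle line bundles on $G/B$.

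\emph{The $\mathbb{P}^1$-bundle step.} For each simple root $\alpha$ let $q_\alpha\colon G/B\to G/P_\alpha$ be the $\mathbb{P}^1$-bundle defined by the minimal parabolic $P_\alpha$. The restriction of $\sL_\lambda$ to a fiber is $\sO_{\mathbb{P}^1}(\langle\lambda,\alpha^\vee\rangle)$, so the $SL_2$-computation of line-bundle cohomology on $\mathbb{P}^1$, together with the projection formula and cohomology-and-base-change, splits into three cases: if $\langle\lambda,\alpha^\vee\rangle\ge0$ one may push down and pull back without changing $H^\bullet(G/B,\sL_\lambda)$; if $\langle\lambda,\alpha^\vee\rangle=-1$ then $R^j(q_\alpha)_\ast\sL_\lambda=0$ for all $j$, so $H^\bullet(G/B,\sL_\lambda)=0$; and if $\langle\lambda,\alpha^\vee\rangle\le-2$ then $H^i(G/B,\sL_\lambda)\cong H^{i-1}(G/B,\sL_{s_\alpha\cdot\lambda})$ for every $i$, where $s_\alpha\cdot\lambda\coloneqq s_\alpha(\lambda+\delta)-\delta$ is the dot action.

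\emph{Assembling the cases.} Using $\langle\delta,\alpha^\vee\rangle=1$ for simple $\alpha$, the condition $\langle\lambda,\alpha^\vee\rangle=-1$ is exactly $\langle\lambda+\delta,\alpha^\vee\rangle=0$. If $\lambda+\delta$ is singular, it is $W$-conjugate to a dominant weight lying on a simple wall; moving $\lambda+\delta$ toward the dominant chamber along a reduced word, the first two cases apply until one meets a weight with pairing $-1$ against some simple $\alpha$, after which the cohomology is zero and stays zero, giving $H^i(G/P,E_\lambda)=0$ for all $i$. If $\lambda+\delta$ is regular of index $p$, take the unique $w\in W$ with $w(\lambda+\delta)$ dominant regular, so $\ell(w)=p$; whenever the current weight is not dominant, regularity forces a simple $\alpha$ with $\langle\lambda,\alpha^\vee\rangle\le-2$, shifting the cohomological degree by one and replacing $\lambda$ by $s_\alpha\cdot\lambda$, while nonnegative pairings leave the cohomology unchanged. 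After exactly $p$ shifts one reaches the dominant weight $w\cdot\lambda=w(\lambda+\delta)-\delta$, where Borel--Weil gives $H^0=G_{w(\lambda+\delta)-\delta}$ and no higher cohomology; undoing the total shift yields $H^i(G/B,\sL_\lambda)=0$ for $i\ne p$ and $H^p(G/B,\sL_\lambda)\cong G_{w(\lambda+\delta)-\delta}$, which transports back to $G/P$ by the reduction step.

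\emph{Main obstacle.} I expect the delicate point to be the bookkeeping in the assembling step: one must verify that running the simple reflections along a \emph{reduced} word genuinely realizes the dot action of $w$ and accounts for the index $p$ exactly --- in particular that in the regular case the intermediate weights never become singular (which would abort the recursion), and that in the singular case one is in fact forced into the $\langle\cdot,\alpha^\vee\rangle=-1$ branch --- together with checking that the base-change identities behind the $R^j(q_\alpha)_\ast$ computations remain valid over the non-affine base $G/P_\alpha$. These are standard but fiddly; the cleanest implementation is Demazure's induction on $\ell(w)$, with the order of the reflections controlled by the exchange condition in the Weyl group.
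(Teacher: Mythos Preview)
Your proposal is a correct outline of Demazure's standard proof of the Borel--Weil--Bott theorem, but there is nothing in the paper to compare it against: the paper states this theorem as a classical result without proof and merely invokes it as a tool (it is immediately followed by ``The next proposition is a direct consequence of Theorem~\ref{thm: Bott}''). So while your argument is mathematically sound, the intended ``proof'' here is simply a citation to the literature (e.g.\ Bott's original paper or a reference such as Weyman's book cited in the paper), not an argument to be reproduced.
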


The next proposition is a direct consequence of Theorem \ref{thm: Bott}.

\begin{Proposition}[Bott's formulas]
Let $\Omega^r_m(d)$ be the $\sO(d)$-twisted sheaf of differential $r$-forms on an $m$-dimensional projective space. For $q,m,r\in \mathbb{Z}_{\ge 0}$ and $d\in \mathbb{Z}$ it holds:
\begin{equation}\label{eq: coh Omega}
    h^q(\Omega_{m}^r(d))=
    \begin{cases}
    \binom{d+m-r}{d}\binom{d-1}{r} & \text{if $q=0\leq r\leq m$ and $d>r$}\\
    1 & \text{if $0\leq q=r\leq m$ and $d=0$};\\
    \binom{-d+r}{-d}\binom{-d-1}{m-r} & \text{if $q=m\ge r\ge 0$ and $d<r-m$}\\
    0 & \text{otherwise.}
    \end{cases}
\end{equation}
\end{Proposition}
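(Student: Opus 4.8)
The plan is to deduce both displayed formulas directly from Bott's theorem (Theorem~\ref{thm: Bott}), applied to $\PP^m=\PP(W)$ (with $\dim W=m+1$) realised as the rational homogeneous variety $G/P$, where $G=SL_{m+1}$ and $P$ is the maximal parabolic subgroup attached to the first simple root. The first step is to identify each sheaf $\Omega^r_m(d)$ with an irreducible homogeneous bundle $E_{\lambda(r,d)}$ and to pin down $\lambda(r,d)$. From the twisted Euler sequence $0\to\Omega^1_m(1)\to W^\ast\otimes\sO\to\sO(1)\to 0$ one reads off that $\Omega^1_m(1)$ is the irreducible $P$-bundle with fibre $\langle e_2,\dots,e_{m+1}\rangle^\ast$ at the base point, so $\Omega^r_m(d)=\wedge^r\!\big(\Omega^1_m(1)\big)\otimes\sO(d-r)=E_{\lambda(r,d)}$, and a direct computation shows that the coordinates of $\lambda(r,d)+\delta$, written in the standard $\epsilon$-basis with $\delta=(m,m-1,\dots,1,0)$, are, in some order, the integer $d+m-r$ together with the $m$ pairwise distinct integers $\{0,1,\dots,m\}\setminus\{m-r\}$.

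With this description the statement reduces to a finite case check. The weight $\lambda(r,d)+\delta$ is \emph{singular} in the sense of Theorem~\ref{thm: Bott}(1) exactly when the ``moving'' coordinate $d+m-r$ equals one of the $m$ fixed ones, that is exactly when $r-m\le d\le r$ and $d\ne 0$ (and also trivially when $r>m$, in which case $\Omega^r_m=0$): this produces the ``otherwise $0$'' line of \eqref{eq: coh Omega}. When $\lambda(r,d)+\delta$ is \emph{regular}, its index $p$ is the number of fixed coordinates exceeding $d+m-r$, which is $0$ if $d>r$, is $m$ if $d<r-m$, and is $r$ if $d=0$ (using $0\le r\le m$); by Theorem~\ref{thm: Bott}(2) all cohomology then sits in the single degree $q=0$, $q=m$, or $q=r$, matching the three surviving regimes.

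It remains to compute the dimension of the one surviving group $H^p(G/P,E_{\lambda(r,d)})=G_{w(\lambda+\delta)-\delta}$ by the Weyl dimension formula. For $d=0$ one has $w(\lambda+\delta)=\delta$, so the representation is trivial and $h^r(\Omega^r_m)=1$; alternatively this is just the Hodge decomposition of $\PP^m$. For $d>r$, sorting $\lambda(r,d)+\delta$ and subtracting $\delta$ gives the partition $(d-r,1^r,0^{m-r})$, and a telescoping evaluation of $\dim G_\mu=\prod_{\alpha\in\Phi^+}(\mu+\delta,\alpha)/(\delta,\alpha)$ yields $\binom{d+m-r}{d}\binom{d-1}{r}$. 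For $d<r-m$ I would not repeat this but instead use Serre duality: $\Omega^r_m(d)^\vee\otimes\omega_{\PP^m}\cong\Omega^{m-r}_m(-d)$, hence $h^m(\Omega^r_m(d))=h^0(\Omega^{m-r}_m(-d))$, and the previous case with $(r,d)$ replaced by $(m-r,-d)$ gives $\binom{-d+r}{-d}\binom{-d-1}{m-r}$.

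I expect the only real bookkeeping obstacle to be the explicit evaluation of the Weyl dimension formula, equivalently the binomial identity $\sum_{i=0}^{r}(-1)^i\binom{m+1}{i}\binom{d+m-i}{m}=\binom{d+m-r}{d}\binom{d-1}{r}$ that reappears if one runs the elementary argument instead, together with keeping the Weyl-group normalisation of $\lambda(r,d)$ consistent; both are routine. For an entirely representation-theory-free alternative one can split the twisted Koszul resolution $0\to\Omega^r_m(d)\to\wedge^r W^\ast\otimes\sO(d-r)\to\wedge^{r-1}W^\ast\otimes\sO(d-r+1)\to\cdots\to W^\ast\otimes\sO(d-1)\to\sO(d)\to 0$ into short exact sequences, feed in the classical values of $h^i(\PP^m,\sO(e))$, obtain the $q=0$ statement by descending induction on $r$ and then the $q=m$ statement by Serre duality, and handle $d=0$ by the Hodge decomposition. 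Since either route is classical, in the write-up it is enough to cite \cite{Weyman} for the passage from Theorem~\ref{thm: Bott} to \eqref{eq: coh Omega} and to record the weight $\lambda(r,d)$ together with the three regular cases.
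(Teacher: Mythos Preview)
Your proposal is correct and follows exactly the route indicated in the paper: the paper offers no argument beyond the sentence ``The next proposition is a direct consequence of Theorem~\ref{thm: Bott}'', and what you have written is precisely that deduction carried out in detail---identifying $\Omega^r_m(d)$ as the homogeneous bundle on $SL_{m+1}/P$ with the weight you describe, reading off the singular/regular dichotomy and the index from the position of the moving coordinate $d+m-r$, and then evaluating the Weyl dimension formula (with Serre duality for the $q=m$ case). Your bookkeeping of $\lambda(r,d)+\delta$ and the resulting partition $(d-r,1^r,0^{m-r})$ is correct, so there is nothing to add beyond noting that the alternative Koszul-resolution argument you sketch at the end is a genuine second proof not alluded to in the paper.
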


\section{The span of singular tuples and the critical space of a tensor}\label{sec: def Z_T and H_T}

Next, we introduce the most important {subset of rank-one tensors} of this paper.

\begin{Definition}\label{def: Z_T}
Consider a tensor $T\in V$. Then
\begin{equation}\label{eq: def Z_T}
Z_T \coloneqq \{[\xx_1\otimes\cdots\otimes\xx_k]\in\PP(V)\mid\text{$(\xx_1,\ldots,\xx_k)$ is a singular $k$-tuple of $T$}\}\,.
\end{equation}
\end{Definition}

For a generic $T\in V$ we have $\dim(Z_T)=0$ and its cardinality $|Z_T|$ equals the ED degree of the Segre variety $\PP$ computed by the Friedland-Ottaviani formula of Theorem \ref{thm: FO formula}.
Throughout the paper we compare the projective span $\langle Z_T\rangle$ with another important tensor subspace which we recall in the next definition.

\begin{Definition}\label{def: critical space}
The {\it critical space} $H_T$ of a tensor $T\in V$ is the linear subspace of $V$ defined by the equations (in the unknowns $z_{i_1\cdots i_k}$ that serve as linear functions on $V$)
\begin{equation}\label{eq: equations critical space}
\sum_{i_\ell\in[n_\ell]}\left(t_{i_1\cdots\,p\,\cdots\,i_k}\,z_{i_1\cdots\,q\,\cdots\,i_k}-t_{i_1\cdots\,q\,\cdots\,i_k}\,z_{i_1\cdots\,p\,\cdots\,i_k}\right)=0\quad\text{where $1\le p<q\le n_\ell$ and $\ell\in[k]$.}
\end{equation}
\end{Definition}

The equations in \eqref{eq: equations critical space} are obtained after computing the two by two minors of the matrix in \eqref{eq: def singular vector tuple matrix} and substituting the relations $z_{j_1\cdots j_k}=x_{1,j_1}\cdots x_{k,j_k}$. In particular, the equations of $H_T$ are linear relations among the elements of $Z_T$, thus in general $\langle Z_T\rangle\subset\PP(H_T)$. Another immediate observation is that $T$ always belongs to $H_T$, as its coordinates always satisfy the equations in \eqref{eq: equations critical space}. We recall a result on the dimension of the critical space. 

\begin{Proposition}{\cite[Proposition 5.6]{OP}}\label{prop: dim H_T}
Consider a tensor $T\in V$ of format $\nn=(n_1,\ldots,n_k)$. Assume $n_1\le\cdots\le n_k$ and let $D=\prod_{i=1}^{k-1}n_i$.
The dimension of the critical space $H_T\subset V$ is
\begin{equation}
\begin{cases}
\prod_{i=1}^kn_i-\sum_{i=1}^k\binom{n_i}{2} & \text{for $n_k\le D$}\\
\binom{D+1}{2}-\sum_{i=1}^{k-1}\binom{n_i}{2} & \text{for $n_k\ge D$}\,.
\end{cases}
\end{equation}
\end{Proposition}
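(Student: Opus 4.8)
```latex
The statement to prove is Proposition \ref{prop: dim H_T}, which computes $\dim(H_T)$ for a generic tensor $T$. The plan is to set up the linear map whose kernel (or whose image, depending on the formulation) defines $H_T$, and then compute its rank. Concretely, the equations \eqref{eq: equations critical space} are the $2\times 2$ minors of the matrix in \eqref{eq: def singular vector tuple matrix} with the rank-one substitution removed; for each factor $\ell\in[k]$ one gets, after contracting $T$ against the unknown tensor $Z=(z_{i_1\cdots i_k})$ in all but the $\ell$-th slot, a skew-symmetric condition that lives in $\Lambda^2 V_\ell$ tensored with the remaining slots. So the natural object is the linear map
\[
\varphi_T\colon V\longrightarrow \bigoplus_{\ell=1}^k \Lambda^2 V_\ell\otimes\bigotimes_{i\neq\ell} V_i,\qquad Z\longmapsto \bigl(\text{antisymmetrization in slot }\ell\text{ of }T\cdot Z\bigr)_{\ell},
\]
where $T\cdot Z$ denotes the slot-wise contraction $t_{i_1\cdots p\cdots i_k}z_{i_1\cdots q\cdots i_k}$. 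Then $H_T=\ker\varphi_T$, and the task reduces to showing $\mathrm{rank}(\varphi_T)=\sum_i\binom{n_i}{2}$ in the range $n_k\le D$, respectively $\mathrm{rank}(\varphi_T)=\prod_i n_i-\binom{D+1}{2}+\sum_{i=1}^{k-1}\binom{n_i}{2}$ in the range $n_k\ge D$.

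For the first range $n_k\le D$ (this is exactly the concise, sub-$D$ regime), I would argue that for generic $T$ the map $\varphi_T$ is as surjective as it can be onto a natural target. The cleanest route is to exhibit one tensor $T_0$ for which the rank is the claimed value and then invoke semicontinuity of the rank: the rank of $\varphi_T$ is lower semicontinuous in $T$, so it suffices to find $T_0$ achieving $\mathrm{rank}=\sum_i\binom{n_i}{2}$ and to check the rank never exceeds this. The upper bound is structural: the $\ell$-th block of equations, for fixed values of the $n_i-1$ "free" indices, contributes at most $\binom{n_\ell}{2}$ independent relations coming from $\Lambda^2$ of an $n_\ell$-dimensional space, and one checks the relations across different blocks are compatible so that the total does not exceed $\sum_\ell\binom{n_\ell}{2}$ — this is essentially the count already implicit in \cite{OP} and reproduced in \cite{DOT}. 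For the lower bound, a convenient choice of $T_0$ is a generic diagonalizable-type tensor (e.g.\ a generic sum of $\le\min n_i$ rank-one terms in "general position", or an identity-like tensor padded with generic entries), for which the blocks of $\varphi_{T_0}$ decouple enough to read off the rank directly; alternatively one uses the product structure and Künneth-type bookkeeping on the Segre. I would cite \cite[Proposition 5.6]{OP} for this half since it is stated there, and only sketch the rank count, emphasizing that genericity of $T$ is what forces equality rather than strict inequality.

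For the second range $n_k\ge D$, the key geometric input is the flattening/conciseness phenomenon, morally the same as Theorem \ref{thm: specialization}: once $n_k$ exceeds $D=\prod_{i<k}n_i$, the tensor $T$, viewed through the $(1,\ldots,1,k)$-flattening $V_1\otimes\cdots\otimes V_{k-1}\otimes V_k \cong \mathbb C^D\otimes V_k$, has its "first $D$ coordinates part" spanning only a $\le D$-dimensional subspace of $V_k$, and the critical-space equations in the last factor force $Z$ to lie in the much smaller space governed by a symmetric $D\times D$ matrix, of dimension $\binom{D+1}{2}$. Quantitatively: contracting $T$ in the first $k-1$ slots against $Z$ identifies $T$ and $Z$ with $D\times n_k$ matrices (rows indexed by the composite index $(i_1,\ldots,i_{k-1})$), and the block of equations coming from $\ell=k$ says exactly that these two matrices have proportional columns after the generic $T$ is plugged in — i.e.\ $Z$'s column span sits in $T$'s column span, a $D$-dimensional space when $T$ is generic and $n_k\ge D$ — while the blocks $\ell<k$ impose on the resulting $D\times D$-type object the symmetry encoded by $\binom{D+1}{2}-\sum_{i=1}^{k-1}\binom{n_i}{2}$. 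I would make this precise by the change of coordinates sending $V_k$ to $\mathbb C^D\oplus(\text{complement})$ adapted to $T$, observing $H_T$ is then independent of the complement, and reducing to the boundary-format computation $n_k=D$, which is covered by the first case evaluated at $n_k=D$ (the two formulas agree there: $\prod_i n_i-\sum_i\binom{n_i}{2}=D\cdot D-\binom{D}{2}-\sum_{i<k}\binom{n_i}{2}=\binom{D+1}{2}-\sum_{i<k}\binom{n_i}{2}$).

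The main obstacle I anticipate is the lower bound in the first regime: proving that the generic $\varphi_T$ actually attains the full rank $\sum_i\binom{n_i}{2}$, rather than being forced to drop because of some unforeseen syzygy among the $k$ blocks of minor-equations. The honest way to handle this is to produce an explicit witness tensor $T_0$ and compute $\mathrm{rank}(\varphi_{T_0})$ by hand (or reduce it, via the Künneth splitting on the Segre, to $k$ essentially independent matrix computations), which is a finite but somewhat fiddly linear-algebra task; everything else — the upper bound, the semicontinuity step, and the reduction $n_k\ge D\Rightarrow n_k=D$ — is comparatively routine. Since this proposition is quoted from \cite[Proposition 5.6]{OP}, in the paper itself I would state it with a reference and only indicate the witness-tensor idea, deferring the full rank count to the cited source.
```
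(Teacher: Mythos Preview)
The paper does not prove this proposition at all: it is stated with the attribution \cite[Proposition 5.6]{OP} and no argument is supplied. You correctly anticipate this in your final paragraph, so in that sense your proposal matches the paper exactly.

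That said, your sketch contains an internal inconsistency worth fixing. Your verbal description of the $\ell$-th block --- ``contracting $T$ against $Z$ in all but the $\ell$-th slot, a skew-symmetric condition that lives in $\Lambda^2 V_\ell$'' --- is correct, but the displayed target $\bigoplus_{\ell} \Lambda^2 V_\ell\otimes\bigotimes_{i\neq\ell}V_i$ is not: the equations \eqref{eq: equations critical space} sum over all indices $i_m$ with $m\neq\ell$, so nothing survives in the other factors and the target is simply $\bigoplus_{\ell}\Lambda^2 V_\ell$, of dimension exactly $\sum_\ell\binom{n_\ell}{2}$. With this correction your rank claim in the regime $n_k\le D$ becomes the assertion that $\varphi_T$ is surjective for generic $T$, and the semicontinuity-plus-witness strategy is sound. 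In the regime $n_k\ge D$ your reduction to the case $n_k=D$ is the right idea, though the intermediate sentence about ``$Z$'s column span sits in $T$'s column span'' is not what the $\ell=k$ block literally says (it says the $n_k\times n_k$ matrix obtained by contracting the flattenings of $T$ and $Z$ is symmetric); the cleaner route is to pass to an orthonormal basis of $V_k$ adapted to the $D$-dimensional image of the flattening of $T$ and observe that the equations involving indices beyond $D$ then force the corresponding slices of $Z$ to vanish, reducing to the boundary computation.
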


\begin{Proposition}{\cite[Proposition 3.6]{DOT}}\label{prop: equality span critical sub-boundary format}
Consider a generic tensor $T$ in a tensor space $V$ of sub-boundary format. Then $\langle Z_T\rangle=\PP(H_T)$.
\end{Proposition}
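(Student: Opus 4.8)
The plan is to show that the span $\langle Z_T\rangle$ already cuts out, scheme-theoretically, enough linear conditions to force equality with $\PP(H_T)$, by a cohomological dimension count. Since $\langle Z_T\rangle\subset\PP(H_T)$ always holds (the equations of $H_T$ are linear relations among the points of $Z_T$), it suffices to prove $\dim\langle Z_T\rangle\ge\dim\PP(H_T)$, i.e.\ that the singular $k$-tuples of a generic $T$ impose no more linear conditions than the defining equations of $H_T$. I would realize $Z_T$ as the zero locus of a section of a vector bundle on the Segre variety $\PP$: writing $T$ as a multilinear form, the condition \eqref{eq: def singular vector tuple matrix} at the $i$-th factor says the contraction $\nabla_iT$ is proportional to $\xx_i$, which is exactly the vanishing of the image of $\nabla_iT$ in the quotient bundle $\mathcal Q_i({\bf 1})$ pulled back to $\PP$. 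Thus $Z_T=\mathcal V(s_T)$ where $s_T$ is a section of $\mathcal E\coloneqq\bigoplus_{i=1}^k\pi_i^\ast\mathcal Q_i({\bf 1})$, a bundle of rank $\sum_i(n_i-1)=\dim\PP$; for generic $T$ this section is regular, so $Z_T$ is a zero-dimensional locally complete intersection of the expected length $\mathrm{ed}(\nn)$.

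The next step is to pass from the section to the Koszul resolution of $\sO_{Z_T}$ and twist by $\sO({\bf 1})$, in order to compute the linear span. Concretely, the span $\langle Z_T\rangle$ has codimension in $\PP(V)$ equal to the rank of the restriction map $H^0(\PP(V),\sO(1))=V^\ast\to H^0(Z_T,\sO_{Z_T}({\bf 1}))$ — equivalently, $\dim\langle Z_T\rangle = h^0(\sO_{Z_T}({\bf 1}))-1$ once this map is surjective, which it is since $Z_T$ is a finite reduced set and $\sO(1)$ is very ample. So I want to compute $h^0(\sO_{Z_T}({\bf 1}))$ from the Koszul complex $0\to\Lambda^{N}\mathcal E^\vee({\bf 1})\to\cdots\to\mathcal E^\vee({\bf 1})\to\sO_\PP({\bf 1})\to\sO_{Z_T}({\bf 1})\to 0$ with $N=\dim\PP$, and chase cohomology. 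Each term $\Lambda^p\mathcal E^\vee({\bf 1})$ decomposes, via the direct-sum structure of $\mathcal E$ and the projection formula, into external tensor products of twists of exterior powers of the $\mathcal Q_i^\vee$ on the individual $\PP(V_i)$; those exterior powers are twisted sheaves of differential forms $\Omega_{n_i-1}^{r}(d)$, whose cohomology is given explicitly by Bott's formulas \eqref{eq: coh Omega}, and K\"unneth \eqref{eq: Kunneth} then assembles everything. Under the sub-boundary hypothesis, the relevant cohomology groups in the interior of the complex vanish — this is precisely the content that, in the boundary-format case, appears as Lemma 3.2 of \cite{DOT} — so the Euler-characteristic computation collapses to $h^0(\sO_{Z_T}({\bf 1})) = h^0(\sO_\PP({\bf 1})) - h^0(\mathcal E^\vee({\bf 1}))$, and one checks this equals $\dim H_T$ by comparing with Proposition \ref{prop: dim H_T}.

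The main obstacle is the cohomology vanishing: I need $H^q(\PP,\Lambda^p\mathcal E^\vee({\bf 1}))=0$ for all $q>0$ and all $p$, and moreover $H^0(\PP,\Lambda^p\mathcal E^\vee({\bf 1}))=0$ for $p\ge 2$, so that the long exact sequence of the Koszul complex degenerates into the single short exact sequence above. After the K\"unneth decomposition each summand is a product of terms $h^{q_i}(\Omega_{n_i-1}^{r_i}(d_i))$ with $\sum r_i=p$ and the twist $d_i$ determined by how many of the $k$ tensor factors of $\mathcal E$ contribute to that summand; the sub-boundary inequalities $n_i\le 1+\sum_{j\neq i}(n_j-1)$ are exactly what is needed to keep every such twisted form-sheaf in the vanishing range of \eqref{eq: coh Omega} (the degree $d_i$ stays below $r_i-(n_i-1)$ or above $r_i$ appropriately, never landing in the narrow window where cohomology survives). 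I would organize this as a lemma isolating the numerical inequality and its consequence for \eqref{eq: coh Omega}, then feed it into the Koszul chase; this is the step where genericity of $T$ (regularity of $s_T$), the precise bookkeeping of twists, and the boundary-format threshold all have to be handled carefully, and it is the only place the hypothesis is really used. Finally, once $\langle Z_T\rangle=\PP(H_T)$, I would remark the immediate corollary $T\in\langle Z_T\rangle$, since $T\in H_T$ always.
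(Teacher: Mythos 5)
Your proposal follows essentially the same route as the cited proof in \cite[Proposition 3.6]{DOT}, which this paper does not reprove but whose machinery it sets up verbatim in Section \ref{sec: cohomology}: realize $Z_T$ as the zero scheme of the regular section $s_T$ of $\sE=\bigoplus_i\pi_i^*\mathcal Q_i({\bf 1})$, twist the Koszul complex by $\sO({\bf 1})$, decompose $\bigwedge^r\sE^*\otimes\sO({\bf 1})$ into external products of twisted sheaves $\Omega^{j_i}_{n_i-1}(2j_i+1-r)$ as in Lemma \ref{lemma 3.2 DOT}, and use Bott plus K\"unneth to kill the intermediate cohomology under the sub-boundary inequalities, so that $h^0(\sI_{Z_T}({\bf 1}))=h^0(\sE^{(1)})=\sum_i\binom{n_i}{2}=\codim\PP(H_T)$. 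One small correction: your claim that $V^*\to H^0(\sO_{Z_T}({\bf 1}))$ is surjective (and the resulting formula $h^0(\sO_{Z_T}({\bf 1}))=h^0(\sO_\PP({\bf 1}))-h^0(\sE^\vee({\bf 1}))$) is false in general, since $|Z_T|=\mathrm{ed}(\nn)$ typically exceeds $\dim V$ and the points are degenerate; but this is immaterial, because the codimension of $\langle Z_T\rangle$ is exactly $h^0(\sI_{Z_T}({\bf 1}))$, which your Koszul chase computes directly without any surjectivity input.
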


Following up Proposition \ref{prop: equality span critical sub-boundary format}, in \cite[Remark 3.7]{DOT} the authors observed that the containment $\langle Z_T\rangle\subset\PP(H_T)$ may become strict beyond the boundary format. Hence they posed the problem of studying the dimension of $\langle Z_T\rangle$ beyond the boundary format. This problem motivated our research. Another problem is to check whether $T\in\langle Z_T\rangle$ even when $\langle Z_T\rangle$ is strictly contained in $\PP(H_T)$.

\medskip
A close friend of $Z_T$ is the following set, which is defined only for generic tensors.

\begin{Definition}\label{def: mathrm{Eig}(T)}
Let $\PP\subset\PP(V)$ be the Segre variety and $T\in V$ be a generic tensor. We define
\begin{equation}
\mathrm{Eig}(T)\coloneqq\{(\xx^{(1)},\ldots,\xx^{(\mathrm{ed}(\nn))})\mid\text{$\xx^{(i)}$ is a singular $k$-tuple of $T$ for all $i\in[k]$}\}
\end{equation}
as a subset of the non-ordered cartesian product $\PP^{\times \mathrm{ed}(\nn)}/S_{\mathrm{ed}(\nn)}$, where $\mathrm{ed}(\nn)$ is the ED degree of $\PP$ computed in Theorem \ref{thm: FO formula} and $S_{\mathrm{ed}(\nn)}$ denotes the symmetric group on $\mathrm{ed}(\nn)$ elements. 
\end{Definition}

\begin{Theorem}{\cite[Theorem 1.3]{turatti2021tensors}}\label{thm: fiber}
Consider a tensor space $V$ of sub-boundary format. Let 
\begin{equation}\label{eq: def tau}
\tau\colon\PP(V)\dashrightarrow\frac{\PP^{\times \mathrm{ed}(\nn)}}{S_{\mathrm{ed}(\nn)}}
\end{equation}
be the rational map sending a tensor $T\in V$ to the locus of singular $k$-tuples $\mathrm{Eig}(T)$. If $T\in V$ is generic, then the fiber $\tau^{-1}(T)$ consists only of $T$.
\end{Theorem}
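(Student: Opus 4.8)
The plan is to show that the fiber $\tau^{-1}(T)$ over a generic tensor $T$ of sub-boundary format is a single point, using the fact that $T$ can be recovered from the linear span $\langle Z_T\rangle$ together with the scheme structure of $Z_T$. Concretely, suppose $T'\in V$ is another tensor with $\mathrm{Eig}(T')=\mathrm{Eig}(T)$, so that $Z_{T'}=Z_T$ as subsets (indeed as schemes, by genericity and simplicity of the points). First I would invoke Proposition \ref{prop: equality span critical sub-boundary format} to identify $\langle Z_T\rangle$ with $\PP(H_T)$ and $\langle Z_{T'}\rangle$ with $\PP(H_{T'})$; since the two spans coincide, we get $H_T=H_{T'}$ as linear subspaces of $V$. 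The key point is then that the equations \eqref{eq: equations critical space} defining $H_T$ are bilinear in the entries of $T$ and in the coordinates $z$, so that a generic $T$ is determined (up to scale) by the subspace $H_T$ it cuts out: the map $T\mapsto H_T$ is generically injective on $\PP(V)$.

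The heart of the argument is therefore the claim that $T$ is the unique tensor (up to scalar) whose critical space is $H_T$. To see this I would argue as follows. The linear system \eqref{eq: equations critical space}, viewed with the $z$'s as the chosen point of $H_T$ and the $t$'s as unknowns, says that $T$ lies in the intersection of the kernels of a collection of linear maps determined by the points of $Z_T$. Equivalently, each singular $k$-tuple $(\xx_1,\dots,\xx_k)$ of $T$ imposes, via \eqref{eq: def singular vector tuple matrix}, the condition that the contraction $T(\xx_1\otimes\cdots\otimes\widehat{\xx_i}\otimes\cdots\otimes\xx_k)$ is proportional to $\xx_i$ for every $i$; running over all $\mathrm{ed}(\nn)$ singular tuples and recalling that $\langle Z_T\rangle=\PP(H_T)$ has the expected dimension from Proposition \ref{prop: dim H_T}, the space of tensors $T'$ sharing all these contraction–proportionality relations is one-dimensional, hence equals $\langle T\rangle$. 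One has to be a little careful: knowing $Z_{T'}=Z_T$ gives us that each $(\xx_1,\dots,\xx_k)\in Z_T$ is also a singular tuple of $T'$, but a priori with different proportionality scalars; the extra input is that $Z_{T'}$ is a zero-dimensional scheme of the correct length $\mathrm{ed}(\nn)$, so no singular tuples of $T'$ are "lost" and the count is tight.

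For the genericity and counting bookkeeping I would proceed dimension-theoretically. Consider the incidence variety $\Sigma\subset\PP(V)\times (\PP^{\times\mathrm{ed}(\nn)}/S_{\mathrm{ed}(\nn)})$ consisting of pairs $(T,\mathrm{Eig}(T))$, with its two projections; $\tau$ is the rational map induced by the first projection, and $\Sigma$ is irreducible of dimension $\dim\PP(V)$ because the generic fiber of the first projection is a point (the generic $T$ has finitely many, in fact $\mathrm{ed}(\nn)$, singular tuples). It suffices to show the second projection is generically injective onto its image. Suppose not: then over a generic point of the image there are at least two tensors $T,T'$, distinct in $\PP(V)$, with $Z_T=Z_{T'}$; but the argument of the previous paragraph, using $\langle Z_T\rangle=\PP(H_T)=\PP(H_{T'})=\langle Z_{T'}\rangle$ and the linearity of \eqref{eq: equations critical space} in the tensor entries, forces $T'\in\langle T\rangle$, a contradiction. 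The main obstacle, and the place requiring the most care, is the step asserting that the $T'$ sharing all the singular-tuple proportionality conditions of $T$ form exactly a one-dimensional space: this is where one genuinely needs the sub-boundary hypothesis (so that Proposition \ref{prop: equality span critical sub-boundary format} applies and $\langle Z_T\rangle$ has the predicted dimension from Proposition \ref{prop: dim H_T}), and one must rule out the degenerate possibility that the contractions collapse or that $Z_{T'}$ has unexpected embedded or excess components; here genericity of $T$ — and hence simplicity and general position of the points of $Z_T$ — is used decisively.
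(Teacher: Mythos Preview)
This theorem is quoted from \cite{turatti2021tensors} and is not proved in the present paper; there is no argument here to compare against. Let me nonetheless evaluate the proposal on its own terms.

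Your outline has the right shape: the fiber $\tau^{-1}(\mathrm{Eig}(T))$ is indeed a linear subspace of $\PP(V)$, since the conditions \eqref{eq: def singular vector tuple matrix} are linear in the entries of the tensor, and passing through $\langle Z_T\rangle=\PP(H_T)$ via Proposition~\ref{prop: equality span critical sub-boundary format} is natural. But the step you yourself flag as ``the main obstacle'' is the entire content of the theorem, and you do not prove it. Saying that ``the map $T\mapsto H_T$ is generically injective on $\PP(V)$'' or that the tensors sharing all the singular-tuple proportionality conditions of $T$ form a one-dimensional space is exactly what has to be shown. Invoking Proposition~\ref{prop: dim H_T} gives the dimension of $H_T$ inside $V$; it says nothing about the dimension of the set of tensors whose critical space equals a prescribed subspace, which is a different linear-algebra problem.

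To see that this step is genuinely nontrivial and cannot be filled by a soft count, consider $k=2$: the only sub-boundary matrix format is the square one $n\times n$, and there the conclusion is \emph{false}. Any matrix $T'=\sum_i c_i\,u_i v_i^{T}$ built from the singular vectors of $T$ with arbitrary scalars $c_i$ has the same $Z_T$ and the same critical space, so the fiber has projective dimension $n-1$, not $0$. Hence the argument must use something specific to $k\ge 3$, and your sketch does not isolate what that is. The bilinearity and dimension observations you make are correct but not sufficient; the actual proof in \cite{turatti2021tensors} requires more than the framework you set up.
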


We generalize the previous result in Theorem \ref{thm: fiberbf}.

\section{Computing the dimension of the span of singular tuples}\label{sec: cohomology}

Consider the notations used in Section \ref{sec: cohomology prelim}. We recall the construction of $Z_T$ as zero locus of a section $\sigma$ of a suitable vector bundle on $\PP$, which is defined as
\begin{equation}
\sE\coloneqq\bigoplus_{i=1}^k\sE_i\,,\quad\sE_i\coloneqq(\pi_i^\ast\mathcal{Q}_i)\otimes\mathcal{O}(1,\ldots,1,\overbracket{0}^i,1,\ldots,1)\quad\forall\,i\in[k]\,.
\end{equation}
We have that $\mathrm{rank}(\sE)=\dim(\PP)=\sum_{i=1}^k(n_i-1)$. For every $i\in[k]$, the tensor $T$ yields a global section of $\sE_i$, which over the point $([\xx_1],\ldots,[\xx_k])\in\PP$ is the map
\[
(\lambda_1\xx_1,\ldots,\lambda_k\xx_k)\in\prod_{i=1}^k\langle\xx_i\rangle \mapsto [T(\lambda_1\xx_1\otimes\cdots\otimes\lambda_{i-1}\xx_{i-1}\otimes\lambda_{i+1}\xx_{i+1}\otimes\cdots\otimes\lambda_k\xx_k)]\in\frac{V_i}{\langle\xx_i\rangle}\,.
\]
Combining these $k$ sections, the tensor $T$ yields a global section $s_T$ of $\sE$. By \cite[Proposition 2.6]{DOT}, if $T$ is generic, then $[\xx_1\otimes\cdots\otimes\xx_k]\in Z_T$ if and only if $([\xx_1],\ldots,[\xx_k])$ is in the zero locus of the section $s_T$.
The section $s_T$ of $\sE$ yields a homomorphism $\sE^\ast\to\sO$ of sheaves whose image is contained in the ideal sheaf $\sI_{Z_T}$ of the zero locus of $s_T$.

\begin{Lemma}{\cite[Lemma 3.2]{DOT}}\label{lemma 3.2 DOT}
Define $\sE^{(r)}\coloneqq\left(\bigwedge^r\sE^\ast\right)\otimes \sO({\bf 1})$ for all integer $r\ge 1$. Then
\begin{equation}\label{eq: iso omega}
\sE^{(r)}\cong\bigoplus_{|{\bf j}|=r}\bigotimes_{i=1}^k\pi_i^\ast \Omega_{n_i-1}^{j_i}(2j_i+1-r)\,.
\end{equation}
\end{Lemma}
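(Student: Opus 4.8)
The claim to prove is the isomorphism \eqref{eq: iso omega}, namely
\[
\sE^{(r)}=\Bigl(\textstyle\bigwedge^r\sE^\ast\Bigr)\otimes\sO({\bf 1})\cong\bigoplus_{|{\bf j}|=r}\bigotimes_{i=1}^k\pi_i^\ast\Omega_{n_i-1}^{j_i}(2j_i+1-r)\,.
\]
The natural approach is to unwind the definition of $\sE$ as a direct sum $\bigoplus_{i=1}^k\sE_i$, compute the exterior power of the dual of such a direct sum, and then identify each summand with a twisted sheaf of differential forms on the relevant factor, finally moving the line-bundle twists through the pullbacks. First I would recall the standard formula for the exterior power of a direct sum: $\bigwedge^r\bigl(\bigoplus_{i=1}^k\sF_i\bigr)\cong\bigoplus_{j_1+\cdots+j_k=r}\bigotimes_{i=1}^k\bigwedge^{j_i}\sF_i$. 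Applying this to $\sE^\ast=\bigoplus_{i=1}^k\sE_i^\ast$ reduces the problem to understanding $\bigwedge^{j_i}\sE_i^\ast$ for each $i$.

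\textbf{Identifying the factors.} By definition $\sE_i=(\pi_i^\ast\mathcal{Q}_i)\otimes\sO(1,\ldots,1,\overbracket{0}^i,1,\ldots,1)$, so
\[
\sE_i^\ast=(\pi_i^\ast\mathcal{Q}_i^\ast)\otimes\sO(-1,\ldots,-1,\overbracket{0}^i,-1,\ldots,-1)\,.
\]
Since $\mathcal{Q}_i$ is the quotient bundle on $\PP(V_i)$ (with $0\to\sO_{\PP(V_i)}(-1)\to V_i\otimes\sO_{\PP(V_i)}\to\mathcal{Q}_i\to 0$), its dual fits in $0\to\mathcal{Q}_i^\ast\to V_i^\ast\otimes\sO_{\PP(V_i)}\to\sO_{\PP(V_i)}(1)\to 0$, and one has the classical identification $\bigwedge^{j_i}\mathcal{Q}_i^\ast\cong\Omega^{j_i}_{\PP(V_i)}(j_i)=\Omega^{j_i}_{n_i-1}(j_i)$ (this is the Euler-sequence description of the sheaf of $j_i$-forms: $\Omega^{j_i}_{\PP^{n-1}}\cong\bigwedge^{j_i}\bigl(\mathcal{Q}^\ast\otimes\sO(-1)\bigr)=\bigl(\bigwedge^{j_i}\mathcal{Q}^\ast\bigr)(-j_i)$, hence $\bigwedge^{j_i}\mathcal{Q}^\ast\cong\Omega^{j_i}(j_i)$). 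Therefore $\bigwedge^{j_i}(\pi_i^\ast\mathcal{Q}_i^\ast)\cong\pi_i^\ast\Omega^{j_i}_{n_i-1}(j_i)$, and the line-bundle twist in $\sE_i^\ast$ contributes a factor $\sO(-1,\ldots,-1,0,-1,\ldots,-1)^{\otimes j_i}$.

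\textbf{Bookkeeping the twists.} The remaining step is to collect, over a fixed multi-index ${\bf j}$ with $|{\bf j}|=r$, the total twist on the $i$-th factor. From $\bigwedge^{j_i}\sE_i^\ast$ we get $\pi_i^\ast\Omega^{j_i}_{n_i-1}(j_i)$ twisted by $\sO(\ldots)$-factors: each of the $k$ summands $\sE_m^\ast$ with $m\neq i$ contributes $\sO(0,\ldots,0,-j_m,0,\ldots,0)$ in the $i$-th slot (the ``$-1$'' in position $i$ raised to $j_m$), while $\sE_i^\ast$ itself contributes nothing in slot $i$ (the $0$ there) but the $(j_i)$ from $\bigwedge^{j_i}\mathcal{Q}_i^\ast$. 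Summing, the twist on the $i$-th factor is $j_i-\sum_{m\neq i}j_m=j_i-(r-j_i)=2j_i-r$, so the $i$-th factor becomes $\pi_i^\ast\Omega^{j_i}_{n_i-1}(2j_i-r)$. Finally tensoring the whole thing by $\sO({\bf 1})=\bigotimes_i\pi_i^\ast\sO_{\PP(V_i)}(1)$ adds $+1$ to each slot, giving $\pi_i^\ast\Omega^{j_i}_{n_i-1}(2j_i+1-r)$ and hence \eqref{eq: iso omega}. I expect the only genuinely delicate point to be the careful sign/degree bookkeeping of the $\sO({\bf 1})$-twists across the $k$ factors and making sure every tensor box is attributed to the correct projective factor via $\pi_i^\ast$; the exterior-power-of-a-sum formula and the Euler-sequence identification $\bigwedge^{j_i}\mathcal{Q}_i^\ast\cong\Omega^{j_i}_{n_i-1}(j_i)$ are standard, so the proof is essentially a clean but attentive computation. (This argument is exactly the one behind \cite[Lemma 3.2]{DOT}.)
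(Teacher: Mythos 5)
Your argument is correct and is exactly the standard derivation: the paper itself does not prove this lemma but cites it from \cite[Lemma 3.2]{DOT}, where the proof proceeds along the same lines (exterior power of a direct sum, the Euler-sequence identification $\bigwedge^{j_i}\mathcal{Q}_i^\ast\cong\Omega^{j_i}_{n_i-1}(j_i)$, and the twist bookkeeping $j_i-\sum_{m\neq i}j_m=2j_i-r$, plus $1$ from $\sO({\bf 1})$). No gaps.
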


Using the isomorphism \eqref{eq: iso omega}, in the following three lemmas we compute the cohomology groups $H^{q}(\ser)$ for the order $\ell+1$ format $\nn=(2,\dots,2,n)$.

\begin{Lemma}\label{lemma: coh1}
Consider a space $V$ of order $\ell+1$ tensors of format $\nn=(2,\ldots,2,n)$ with $n\ge\ell+2$. Given nonnegative integers $r\ge2$ and $q\leq r$, then $H^q(\sE^{(r)})=0$ for all $r<\ell$.
\end{Lemma}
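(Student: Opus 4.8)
The plan is to use the decomposition \eqref{eq: iso omega} together with K\"unneth's formula \eqref{eq: Kunneth} and Bott's formulas \eqref{eq: coh Omega}, reducing the vanishing of $H^q(\sE^{(r)})$ to a purely combinatorial check on the summands indexed by the compositions ${\bf j}=(j_1,\ldots,j_\ell,j_{\ell+1})$ with $|{\bf j}|=r$. Since the first $\ell$ factors are $\PP^1$'s, for each $i\le\ell$ we have $j_i\in\{0,1\}$, so the only freedom is a subset $S\subseteq[\ell]$ of size $|S|=\sum_{i\le\ell}j_i$ together with the last index $j_{\ell+1}=r-|S|$; note $j_{\ell+1}$ ranges in $[\max(0,r-\ell),\,r]$, and it is exactly here that the hypothesis $r<\ell$ (allowing $j_{\ell+1}$ to be small) and $n\ge\ell+2$ (so the last factor $\PP^{n-1}$ is ``wide enough'') will be used.

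First I would write out, for a fixed ${\bf j}$, the tensor factor
\[
\bigotimes_{i\in S}\Omega_1^{1}(2-r)\;\otimes\;\bigotimes_{i\notin S}\Omega_1^{0}(1-r)\;\otimes\;\Omega_{n-1}^{\,j_{\ell+1}}(2j_{\ell+1}+1-r),
\]
and apply K\"unneth once more to split the cohomology of this product into a sum over $(q_1,\ldots,q_{\ell+1})$ with $\sum q_i=q$. For the $\PP^1$ factors, Bott's formula \eqref{eq: coh Omega} says $h^{q_i}(\Omega_1^{r_i}(d))$ is nonzero only in the three narrow regimes listed there; since here $d=2-r\le 0$ on the $S$-factors and $d=1-r<0$ on the others (using $r\ge 2$), one checks that each $\PP^1$ factor contributes cohomology only in a single degree (namely $q_i=1$ when $d\le -2$, or the degenerate $d=0$ case which cannot occur for $r\ge3$, with $r=2$ handled separately), so that $\sum_{i\le\ell}q_i$ is forced to equal either $|S^c|$ or $\ell$ depending on the regime. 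I would then observe that to get $q\le r<\ell$ we are forced into a configuration where many $\PP^1$ factors would need to contribute in degree $0$, which by Bott requires the corresponding twist to be exactly $0$ — impossible since $2-r<0$ and $1-r<0$. The remaining case is the last factor: one must show $H^{q_{\ell+1}}\big(\Omega_{n-1}^{j_{\ell+1}}(2j_{\ell+1}+1-r)\big)=0$ for the only surviving value of $q_{\ell+1}$, and here the bound $n\ge\ell+2$ together with $j_{\ell+1}\ge r-\ell$ makes the twist $2j_{\ell+1}+1-r$ land strictly between the thresholds $r_i$ and $r_i-m$ of \eqref{eq: coh Omega}, i.e.\ in the ``otherwise $=0$'' regime.

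\textbf{Main obstacle.}
The delicate point is the bookkeeping when $q=r$ exactly (the top allowed value) and in the small-$r$ edge cases $r=2,3$: here one cannot simply say ``too few factors can contribute'', and one has to verify that every composition ${\bf j}$ with $|{\bf j}|=r<\ell$ forces at least one factor into the vanishing regime of \eqref{eq: coh Omega}, carefully tracking the inequalities $2j_{\ell+1}+1-r$ versus $j_{\ell+1}$ and versus $j_{\ell+1}-(n-1)$. I expect this case analysis — ruling out each potentially nonzero K\"unneth summand by pinning down exactly which $q_i$'s are admissible and then showing their sum cannot reach a value $\le r$ unless the last factor vanishes — to be the bulk of the work, while the overall architecture (decompose, K\"unneth, Bott, count) is routine and parallels \cite[Lemma 3.2]{DOT}.
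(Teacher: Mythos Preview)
Your architecture matches the paper's exactly (decomposition \eqref{eq: iso omega}, K\"unneth, Bott), but you are substantially overcomplicating the execution and misidentifying where the content lies. The paper's proof is essentially three lines: for $r\ge 2$ and either value $j_i\in\{0,1\}$ one has $\Omega_1^{j_i}(2j_i+1-r)\cong\sO_{\PP^1}(1-r)$, whose $H^0$ vanishes; hence in every nonzero K\"unneth summand each of the $\ell$ copies of $\PP^1$ can contribute only in cohomological degree $1$, forcing $q\ge\ell$. Since the hypothesis gives $q\le r<\ell$, this is already a contradiction and the lemma is proved.

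In particular, no analysis of the last factor $\Omega_{n-1}^{j_{\ell+1}}(2j_{\ell+1}+1-r)$ is ever needed, the hypothesis $n\ge\ell+2$ plays no role in this lemma (it is a standing assumption for the later ones), and the ``delicate edge cases'' $r=2,3$ you anticipate do not exist: for $r=2$ one has $\sO_{\PP^1}(-1)$ on every $\PP^1$ factor, so $H^0=H^1=0$ and every summand vanishes outright, while for $r\ge 3$ the argument above applies uniformly. Your intermediate claim that $\sum_{i\le\ell}q_i$ is forced to equal ``either $|S^c|$ or $\ell$ depending on the regime'' is also off: since both twists $2-r$ and $1-r$ yield the same line bundle $\sO_{\PP^1}(1-r)$, the subset $S$ is irrelevant and the sum is always $\ell$ in any nonvanishing term.
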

\begin{proof}
We use Theorem \ref{thm: Kunneth} and Lemma \ref{lemma 3.2 DOT} to compute the cohomology of $H^q(\sE^{(r)})$. Notice that
\[
H^0(\Omega^{j_i}_1(1+2j_i-r))=0
\]
for all $0\leq j_i\leq r$. The only possibility for the non-vanishing of $H^q(\sE^{(r)})$ is that
\[
H^1(\Omega^{j_i}_1(1+2j_i-r))\neq 0\,,
\]
that holds for $r>2$. This implies that
\[
H^q(\sE^{(r)})=\left(\bigotimes_{i=1}^\ell H^1(\Omega^{j_i}_1(1+2j_i-r))\right)\otimes H^{j_{\ell+1}}(\Omega^{j_{\ell+1}}_{\ell+1}(1+2j_{\ell+1}-r))\,.
\]
In turn we have $q\ge\ell$, thus the claim holds.
\end{proof}

\begin{Lemma}\label{lemma: coh2}
Consider a space $V$ of order $\ell+1$ tensors of format $\nn=(2,\ldots,2,n)$ with $n\ge\ell+2$.
Given nonnegative integers $r\ge 2$ and $q<r$, we have that
\begin{itemize}
    \item[$(i)$] if $\ell+1\leq r\leq n-1$, then
    \[
    H^q(\sE^{(r)}) =
    \begin{cases}
    0 & \text{if $q\neq \ell$}\\
    \left(\bigotimes_{i=1}^\ell H^1(\Omega^0_1(-r+1))\right)\otimes H^0(\Omega^{r}_n(r+1)) & \text{if $q=\ell$}
    \end{cases}
    \]
    \item[$(ii)$] if $r\ge n-1$, then $H^\ell(\sE^{(r)})=0$.
\end{itemize} 
\end{Lemma}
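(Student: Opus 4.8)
The plan is to mimic the proof of Lemma~\ref{lemma: coh1}, using K\"unneth's formula (Theorem~\ref{thm: Kunneth}) together with the decomposition \eqref{eq: iso omega} from Lemma~\ref{lemma 3.2 DOT}, but now keeping track of the nonvanishing contributions more carefully since we are in the range $r\ge\ell+1$. Concretely, for each multi-index $\mathbf j$ with $|\mathbf j|=r$ the corresponding summand of $\sE^{(r)}$ contributes $\bigl(\bigotimes_{i=1}^\ell H^{j_i}(\Omega^{j_i}_1(1+2j_i-r))\bigr)\otimes H^{j_{\ell+1}}(\Omega^{j_{\ell+1}}_n(1+2j_{\ell+1}-r))$ to $H^q(\sE^{(r)})$ with $q=\sum_i$ of the cohomological degrees. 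The first step is to analyze the one-dimensional factors: since $0\le j_i\le 1$, Bott's formulas \eqref{eq: coh Omega} give that $H^0(\Omega^{j_i}_1(1+2j_i-r))=0$ for $r\ge 2$ and all $j_i\in\{0,1\}$ (the twist $1+2j_i-r$ is $1-r<0$ when $j_i=0$, and $3-r\le r$ fails the $d>r$ condition except in a boundary subcase one checks directly vanishes or lands in $H^1$), while $H^1(\Omega^{j_i}_1(1+2j_i-r))$ can be nonzero. Hence each of the $\ell$ small factors must contribute in cohomological degree exactly $1$, forcing $q\ge\ell$ in any nonzero summand, and combined with $q<r$ this already rules out many terms.

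For part~$(i)$ the key remaining point is that $q=\ell$ forces every $j_i=1$ for $i\in[\ell]$ (so that $H^1(\Omega^1_1(3-r))\ne0$), hence $j_{\ell+1}=r-\ell$, and the last factor $H^{j_{\ell+1}}(\Omega^{r-\ell}_n(1+2(r-\ell)-r))=H^{r-\ell}(\Omega^{r-\ell}_n(r+1-2\ell))$ must live in cohomological degree $0$. By Bott's formulas, $H^{j}(\Omega^j_n(d))$ with $0\le j\le n$ is nonzero in degree $j$ only when $d=0$, i.e.\ only the $q=0$ row of \eqref{eq: coh Omega} or the middle row survives; combined with the constraint that the overall degree is $\ell$ (not $\ell+$ something), we need the last factor in degree $0$, which by the first line of \eqref{eq: coh Omega} requires $0\le j_{\ell+1}\le n$ and $d=1+2j_{\ell+1}-r>j_{\ell+1}$, i.e.\ $j_{\ell+1}>r-1$; but then $j_{\ell+1}\ge r$ forces $j_{\ell+1}=r$ and all $j_i=0$, contradicting $q=\ell$ — so I need to be a bit more careful here and instead recognize that the stated answer has the last factor written as $H^0(\Omega^r_n(r+1))$, meaning the bookkeeping assigns the full exterior power to the last factor. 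I would therefore redo the index analysis taking the convention from \eqref{eq: iso omega} literally: the unique surviving $\mathbf j$ for $q=\ell$ is the one where the $\ell$ binary slots each sit in $H^1$ of $\Omega^0_1$, i.e.\ $j_i=0$ for $i\in[\ell]$ contributing via the $H^1$ summand $\binom{-d+r}{-d}\binom{-d-1}{m-r}$ line of \eqref{eq: coh Omega} with $m=r=0$... — that is, $H^1(\Omega^0_1(1-r))$, which is exactly the factor $H^1(\Omega^0_1(-r+1))$ appearing in the statement — and then $j_{\ell+1}=r$ with $H^0(\Omega^r_n(r+1))$, which is nonzero precisely when $r\le n-1$ by the top line of \eqref{eq: coh Omega}. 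All other $\mathbf j$ give a strictly larger $q$ or a vanishing factor, so $H^q(\sE^{(r)})=0$ for $q\ne\ell$ and $q<r$, proving~$(i)$.

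For part~$(ii)$, with $r\ge n-1$, I would observe that the only candidate summand contributing to $H^\ell(\sE^{(r)})$ is again the one identified above, with last factor $H^0(\Omega^r_n(r+1))$; but the top line of \eqref{eq: coh Omega} requires $r\le n$ — actually $0\le r\le n$ and $r+1>r$ automatically, so $H^0(\Omega^r_n(r+1))\ne 0$ iff $r\le n$, wait — so the genuine obstruction is $r>n$; for $r=n-1$ and $r=n$ one must check the degenerate cases of Bott's formula directly, and I expect that $H^0(\Omega^r_n(r+1))$ vanishes exactly when $r\ge n$ because $\Omega^r_n=0$ for $r>n$ and $\Omega^n_n(n+1)=\sO(1)$ twisted down... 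I would pin down the precise threshold by evaluating the binomial $\binom{d+m-r}{d}\binom{d-1}{r}$ with $m=n$, $d=r+1$: this equals $\binom{n+1}{r+1}\binom{r}{r}=\binom{n+1}{r+1}$, which is zero exactly when $r+1>n+1$, i.e.\ $r>n$; so strictly this shows $H^\ell(\sE^{(r)})=0$ only for $r\ge n+1$, and the case $r=n-1,n$ needs the complementary vanishing coming from the $q<r$ hypothesis interacting with which summands can appear — I suspect the statement implicitly uses $q<r$ and that for $r=n-1$ one has $\ell<r$ (valid since $n\ge\ell+2$ gives $n-1\ge\ell+1>\ell$) so $H^\ell$ is in the relevant range, and the vanishing must instead be extracted from a more refined count. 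The main obstacle I anticipate is exactly this boundary bookkeeping in part~$(ii)$: correctly identifying which $\mathbf j$'s can contribute to degree exactly $\ell$ when $r$ is large, and checking that the relevant binomial coefficients from \eqref{eq: coh Omega} vanish in the stated range $r\ge n-1$ — this likely requires either sharpening the hypothesis, appealing to an Euler-sequence/long-exact-sequence argument to get the vanishing at $r=n-1,n$, or reconciling the index ranges with the convention in \eqref{eq: iso omega}. Everything else is a routine application of K\"unneth plus Bott's formulas following the template of Lemma~\ref{lemma: coh1}.
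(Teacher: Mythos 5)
Your overall route is the paper's: K\"unneth plus Bott's formulas applied to the decomposition \eqref{eq: iso omega}, with the observation that each $\PP^1$-factor can only contribute in cohomological degree $1$ (forcing $q\ge\ell$), and the identification of the unique surviving multi-index $j_1=\cdots=j_\ell=0$, $j_{\ell+1}=r$ for $q=\ell$. However, there are two genuine gaps. First, your part $(ii)$ is not a proof: you end undecided between the thresholds $r\ge n-1$, $r\ge n$ and $r\ge n+1$. The source of the confusion is that you read the last factor literally as $\Omega^r_n$ on $\PP^n$; by \eqref{eq: iso omega} the relevant bundle lives on $\PP(V_{\ell+1})\cong\PP^{n-1}$, so the factor is $\Omega^r_{n-1}(r+1)$ (as in Lemma \ref{lemma: coh3} -- the ``$n$'' in the statement is a typo). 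The paper's argument for $(ii)$ is then one line: $\Omega^r_{n-1}=0$ identically once $r\ge n$, since $r$ exceeds the dimension of $\PP^{n-1}$; no binomial-coefficient or Euler-sequence analysis is needed.

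Second, in part $(i)$ your assertion that ``all other $\mathbf j$ give a strictly larger $q$ or a vanishing factor'' is exactly the point that requires work, and you do not carry it out. Since every small factor sits in degree $1$, a nonzero summand with $\ell<q<r$ would need the last factor $\Omega^{j_{\ell+1}}_{n-1}(1+2j_{\ell+1}-r)$ to have nonzero cohomology in some degree strictly between $0$ and $n-1$, or in degree $n-1$. Bott's formulas leave two cases to exclude: the middle cohomology $H^{j_{\ell+1}}$, which occurs only when $1+2j_{\ell+1}-r=0$, i.e.\ $j_{\ell+1}=(r-1)/2$, and is killed because then $q=\ell+j_{\ell+1}<r$ while $r=|\mathbf j|\le\ell+j_{\ell+1}$, a contradiction; and the top cohomology $H^{n-1}$, which requires $1+2j_{\ell+1}-r<j_{\ell+1}-n+1$ and, combined with $j_{\ell+1}\ge r-\ell$, yields $n<\ell$, contradicting $n\ge\ell+2$. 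These are cases $(2)$ and $(3)$ of the paper's proof; without them the vanishing for $q\ne\ell$ is unproven. (Your first, abandoned attempt in part $(i)$ -- forcing $j_i=1$ for all $i$ -- also reflects a confusion between the exterior-power index $j_i$ and the cohomological degree of the factor; the degree-$1$ contribution comes from $H^1$ of $\Omega^{j_i}_1$ regardless of whether $j_i$ is $0$ or $1$, and the surviving term has $j_i=0$.)
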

\begin{proof}
Again we use the fact that $H^i(\Omega^{j_i}_1(2j_i+1-r))\neq 0$ if and only if $i=1$. We consider the cases where the cohomology of $\Omega^{j_{\ell+1}}_{n-1}(1+2j_{\ell+1}-r)$ does not vanish. Notice also that $r-\ell\leq j_{\ell+1}\leq \min\{r,n-1\}$.

\begin{enumerate}
    \item We have $H^0(\Omega^{j_{\ell+1}}_{n-1}(1+2j_{\ell+1}-r))\neq 0$ if and only if $1+2j_{\ell+1}-r>j_{\ell+1}$. Hence $j_{\ell+1}+1>r$, namely $r=j_{\ell+1}$. In such case we have $j_i=0$ for all $i\in[\ell]$.
    \item We have $H^{n-1}(\Omega_{n-1}^{j_{\ell+1}}(1+2j_{\ell+1}-r))\neq 0$ if and only if $1+2j_{\ell+1}-r<j_{\ell+1}-n+1$. Hence $r-\ell+n\leq j_{\ell+1}+n<r$ which yields $n<\ell$, a contradiction.
    \item Finally, we have $H^{j_{\ell+1}}(\Omega^{j_{\ell+1}}_{n-1}(1+2j_{\ell+1}-r))\neq 0$ if and only if $1+2j_{\ell+1}-r=0$. This means that $j_{\ell+1}=\frac{r-1}{2}$. Furthermore $q=\ell+j_{\ell+1}< r$. On the other hand $r=\sum_{i=1}^\ell j_i+j_{\ell+1}\leq \ell+j_{\ell+1}$, so $r\leq \ell+j_{\ell+1}<r$, a contradiction.
\end{enumerate}
Thus the only non-zero cohomology of $\Omega^{j_{\ell+1}}_{n-1}(1+2j_{\ell+1}-r)$ comes from case $(1)$, that corresponds exactly to $H^\ell(\sE^{(r)})=\left(\bigotimes_{i=1}^\ell H^1(\Omega^0_1(-r+1))\right)\otimes H^0(\Omega^{r}_{n-1}(r+1))$. Notice also that if $r\ge n$, then $\Omega^{r}_{n-1}=0$.
\end{proof}

\begin{Lemma}\label{lemma: coh3}
Consider a space $V$ of order $\ell+1$ tensors of format $\nn=(2,\ldots,2,n)$ with $n\ge\ell+2$.
Given nonnegative integers $r\ge2$ and $q\le r$, if $\ell=r$ then the only non vanishing cohomology is
\[
H^\ell(\sE^{(\ell)})=\left(\bigotimes_{i=1}^\ell H^1(\Omega^0_1(-r+1))\right)\otimes H^0(\Omega^{r}_{n-1}(r+1))\,.
\]
\end{Lemma}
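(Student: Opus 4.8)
The plan is to mimic the case analysis of Lemma \ref{lemma: coh2}, now under the specialization $r=\ell$. As in the proofs of Lemmas \ref{lemma: coh1} and \ref{lemma: coh2}, I would start from the K\"unneth decomposition of $H^q(\sE^{(r)})$ coming from \eqref{eq: Kunneth} applied to the isomorphism \eqref{eq: iso omega}, so that each summand is indexed by a composition ${\bf j}=(j_1,\ldots,j_\ell,j_{\ell+1})$ with $|{\bf j}|=\ell$, $0\le j_i\le 1$ for $i\in[\ell]$ (since the first $\ell$ factors are $\PP^1$) and $0\le j_{\ell+1}\le n-1$. The first $\ell$ tensor factors contribute $H^{j_i}(\Omega^{j_i}_1(1+2j_i-\ell))$, which by Bott's formulas \eqref{eq: coh Omega} can only be nonzero in degree $j_i=1$ (the twist $1+2j_i-\ell = 3-\ell \le -1$ for $\ell\ge 4$, and $j_i=0$ would require twist $>0$, while $j_i=1$ with the $\Omega^1_1$-case needs twist $<1-1=0$), contributing an $H^1$. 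Since $|{\bf j}|=\ell$ and $j_i\le 1$, the only way to have all $j_i=1$ is $j_{\ell+1}=0$; any composition with some $j_i=0$ forces another index up, but the first $\ell$ are capped at $1$, so $j_{\ell+1}=\ell-\sum_{i=1}^\ell j_i$ and if even one $j_i=0$ we would need $H^0(\Omega^0_1(1-\ell))$, which vanishes. Hence the only surviving composition is $(1,\ldots,1,0)$.

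With ${\bf j}=(1,\ldots,1,0)$ the last factor contributes $H^{j_{\ell+1}}(\Omega^{0}_{n-1}(1-\ell))$ with $j_{\ell+1}=0$; by \eqref{eq: coh Omega} this is $H^0(\sO_{n-1}(1-\ell))$, which is zero because $1-\ell<0$ for $\ell\ge 2$. So a naive reading gives total vanishing, contradicting the claimed nonzero term $H^0(\Omega^r_{n-1}(r+1))$. The resolution — and the point I must be careful about — is that the statement as written uses $\Omega^{r}_{n-1}(r+1)$, i.e. the contribution should come from a composition with $j_{\ell+1}=r=\ell$ rather than $j_{\ell+1}=0$. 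Revisiting case $(1)$ of Lemma \ref{lemma: coh2}: $H^0(\Omega^{j_{\ell+1}}_{n-1}(1+2j_{\ell+1}-r))\neq 0$ exactly when $1+2j_{\ell+1}-r>j_{\ell+1}$, i.e. $j_{\ell+1}>r-1$, i.e. $j_{\ell+1}=r=\ell$ (using $j_{\ell+1}\le n-1$ and $n\ge\ell+2$). But $j_{\ell+1}=\ell=r$ forces $j_i=0$ for all $i\in[\ell]$, and then the first $\ell$ factors contribute $H^1(\Omega^0_1(1-\ell))$ — which vanishes for $\ell\ge2$. So actually both branches give zero, and the honest conclusion for $\ell=r\ge 2$ is that \emph{all} cohomology vanishes; the displayed formula in Lemma \ref{lemma: coh3} is really asserting that $H^\ell(\sE^{(\ell)})$ equals this tensor product of groups which, term by term, happen to vanish — the formula is the ``formal'' value inherited from case $(i)$ of Lemma \ref{lemma: coh2}, which degenerates when $q=\ell=r$ because the condition $q<r$ there is violated.

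Accordingly the proof I would write has three parts. First, run the K\"unneth/Bott bookkeeping exactly as above to show that the only index ${\bf j}$ with $|{\bf j}|=\ell$ that can possibly contribute is either $(1,\ldots,1,0)$ (from an $H^1$ on each $\PP^1$) or $(0,\ldots,0,\ell)$ (from an $H^0$ on the last factor), and that the intermediate degrees $j_{\ell+1}$ with $0<j_{\ell+1}<\ell$ are killed either by the $\PP^1$-factors (need some $j_i=0$, giving $H^0(\Omega^0_1(1-\ell))=0$) or by Bott on the last factor (the $H^{j_{\ell+1}}=H^{j_{\ell+1}}(\Omega^{j_{\ell+1}}_{n-1}(1+2j_{\ell+1}-\ell))$ needs twist $1+2j_{\ell+1}-\ell = j_{\ell+1}$, i.e. $j_{\ell+1}=\ell-1$, which is excluded — or the ``otherwise'' case of \eqref{eq: coh Omega}), together with the top-cohomology case being ruled out by $n>\ell$ exactly as in step $(2)$ of Lemma \ref{lemma: coh2}. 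Second, identify that the surviving formal summand sits in total degree $q=\ell$, which is why the statement places everything in $H^\ell(\sE^{(\ell)})$. Third, record the resulting identity $H^\ell(\sE^{(\ell)})=\bigl(\bigotimes_{i=1}^\ell H^1(\Omega^0_1(-r+1))\bigr)\otimes H^0(\Omega^{r}_{n-1}(r+1))$ with $r=\ell$, noting that — unlike in Lemma \ref{lemma: coh2}$(i)$ — here $H^1(\Omega^0_1(1-\ell))$ does vanish for $\ell\ge 2$, so in fact $H^\ell(\sE^{(\ell)})=0$; the main obstacle is precisely to make sure the degenerate index-counting is airtight and to phrase the conclusion so that it is consistent with (rather than contradicting) the $q<r$ hypothesis used in the earlier lemma. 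Since $\ell\ge 4$ in the intended application (Theorem \ref{thm: 2...2k+2}), I would state the lemma for $\ell\ge 2$ if true at that generality, otherwise restrict to $\ell\ge 4$ to be safe.
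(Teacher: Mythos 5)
Your identification of the unique surviving K\"unneth summand is ultimately correct, but your evaluation of it is wrong, and this reverses your conclusion. The paper's argument, which you reproduce up to that point, is: for $q<\ell$ everything dies because $H^0(\Omega^{j_i}_1(2j_i+1-r))=0$ on each $\PP^1$ factor when $r\ge 2$, so each of the first $\ell$ factors must contribute an $H^1$ and hence $q\ge\ell$; for $q=\ell$ the last factor must then contribute an $H^0$, and $H^0(\Omega^{j_{\ell+1}}_{n-1}(2j_{\ell+1}+1-r))\neq0$ forces $j_{\ell+1}>r-1$, i.e.\ $j_{\ell+1}=\ell$ and $j_1=\cdots=j_\ell=0$ (the only other option, $H^0(\Omega^0_{n-1}(1-r))$, needs $r\le1$). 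So far so good. The fatal step is your claim that $H^1(\Omega^0_1(1-\ell))=H^1(\PP^1,\sO(1-\ell))$ vanishes for $\ell\ge2$. By Serre duality, $h^1(\PP^1,\sO(d))=-d-1$ for $d\le-2$ (this is the third case of \eqref{eq: coh Omega} with $q=m=1$, form degree $0$, $d=1-\ell<-1$), so $h^1(\sO_{\PP^1}(1-\ell))=\ell-2$, which is \emph{nonzero} for every $\ell\ge3$. Consequently $H^\ell(\sE^{(\ell)})\cong\bigl(\bigotimes_{i=1}^\ell H^1(\sO_{\PP^1}(1-\ell))\bigr)\otimes H^0(\Omega^\ell_{n-1}(\ell+1))$ has dimension $(\ell-2)^\ell\binom{n}{\ell+1}$ and does not vanish for $\ell\ge3$: the lemma is a genuine non-vanishing statement, not the ``formal identity of zero groups'' you propose. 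This matters downstream: Theorem \ref{thm: 2...2k+2} computes the rank of the map $\gamma$ \emph{into} $H^\ell(\sE^{(\ell)})\cong(S^{\ell-3}\C^2)^{\otimes\ell}\otimes\bigwedge^{\ell+1}\C^{\ell+2}$, of dimension $(\ell-2)^\ell(\ell+2)$ when $n=\ell+2$; if that group were zero the whole argument there would collapse.

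A secondary source of confusion in your write-up is that you tie the cohomological degree of each factor to its form degree (``can only be nonzero in degree $j_i=1$''), which is what produces the spurious candidate composition $(1,\ldots,1,0)$ and the subsequent back-and-forth. In the K\"unneth decomposition \eqref{eq: Kunneth} the exterior degree $j_i$ and the cohomological degree $q_i$ are independent indices; the contribution of the $i$-th $\PP^1$ in the surviving summand is $H^{1}(\Omega^{0}_1(1-r))$, an $H^1$ of a $0$-form. Keeping the two indices separate, the case analysis closes exactly as in the paper (the top-degree and middle Bott cases on the last factor being excluded by $n\ge\ell+2$ and by $q\le r$ respectively), and there is no degeneration left to resolve.
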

\begin{proof}
For $q<\ell$ it is trivial since $H^0(\Omega_1^{j_i}(2j_i+1-r))=0$ for all $i\in[\ell]$. For the cohomology $q=\ell$ to be non vanishing we need that $H^0(\Omega^{j_{\ell+1}}_{n-1}(2j_{\ell+1}+1-r))\neq 0$. This happens if and only if $-r+1+2j_{\ell+1}> j_{\ell+1}$. This means $j_{\ell+1}> r-1=\ell-1$, thus $j_{\ell+1}=\ell$.

Otherwise, we could have the case $H^0(\Omega^{0}_{n-1}(-r+1))\neq 0$ if and only if $r=1$. Since $r\ge 2$ such case does not hold.
\end{proof}

In the following proofs we use also the Koszul complex (we refer to \cite[Chapter III, Proposition 7.10A]{Hart} for more details)
\begin{equation}\label{eq: Koszul}
0\to \bigwedge^{\dim(\PP)+1}\sE^\ast\xrightarrow{\varphi_{\dim(\PP)}}\bigwedge^{\dim(\PP)}\sE^\ast\xrightarrow{\varphi_{\dim(\PP)-1}}\cdots\xrightarrow{\varphi_{2}}\bigwedge^2\sE^\ast\xrightarrow{\varphi_1}\sE^\ast\to \sI_{Z_T}\to 0
\end{equation}
and for all $r\ge 1$ we define the quotient bundle $\sF_i\coloneqq\left(\bigwedge^{i}\sE^\ast\right)/\mathrm{Im}(\varphi_{i})$.
After tensoring with $\sO({\bf 1})$ the complex \eqref{eq: Koszul} we get the short exact sequences
\begin{equation}\label{eq: shortexactsequence}
\begin{gathered}
0\to \sF_{r+1}({\bf 1})\to \sE^{(r)}\to \sF_r({\bf 1})\to 0\\
0\to \sF_{2}({\bf 1})\to \sE^{(1)}\to \sI_{Z_T}({\bf 1})\to 0\,.
\end{gathered}
\end{equation}
Our goal is to use the long exact sequences in cohomology of the two previous short exact sequences to compute the dimension $h^0(\sI_{Z_T}({\bf 1}))$, that is, the codimension of $\langle Z_T\rangle$ in $\PP(V)$.

\noindent Lemmas \ref{lemma: coh1}, \ref{lemma: coh2} and \ref{lemma: coh3} directly imply the next corollary. 

\begin{Corollary}\label{coh: chains}
Consider a space $V$ of order $\ell+1$ tensors of format $\nn=(2,\ldots,2,n)$ with $n\ge\ell+2$.
The following chains of isomorphisms and inclusions hold:
\begin{equation*}
\begin{gathered}
H^0(\sF_2({\bf 1}))\cong\cdots\cong H^{\ell-1}(\sF_{\ell+1}({\bf 1}))\\
H^{\ell+1}(\sF_{\ell+3}({\bf 1}))\cong\cdots\cong H^{\ell+n-2}(\sF_{\ell+n}({\bf 1}))=0\\
H^1(\sF_2({\bf 1}))\cong\cdots\cong H^{\ell-1}(\sF_\ell({\bf 1}))\subset H^{\ell}(\sF_{\ell+1}({\bf 1}))\\
H^{\ell+2}(\sF_{\ell+2}({\bf 1}))\subset\cdots\subset H^{\ell+n-1}(\sF_{\ell+n}({\bf 1}))=0\,.
\end{gathered}
\end{equation*}
\end{Corollary}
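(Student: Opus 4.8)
The plan is to substitute the vanishing statements of Lemmas~\ref{lemma: coh1}, \ref{lemma: coh2} and \ref{lemma: coh3} into the long exact cohomology sequences of the two short exact sequences in \eqref{eq: shortexactsequence}. From a sequence $0\to\sF_{r+1}({\bf 1})\to\sE^{(r)}\to\sF_r({\bf 1})\to 0$ one extracts, for each pair $(q,r)$, the segment
\[
H^{q-1}(\sE^{(r)})\longrightarrow H^{q-1}(\sF_r({\bf 1}))\xrightarrow{\;\partial\;}H^{q}(\sF_{r+1}({\bf 1}))\longrightarrow H^{q}(\sE^{(r)})\,,
\]
so that the connecting map $\partial$ is an isomorphism as soon as $H^{q-1}(\sE^{(r)})=H^{q}(\sE^{(r)})=0$ and is injective as soon as merely $H^{q-1}(\sE^{(r)})=0$. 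Thus the four claimed chains will be obtained by walking along the two diagonals $q=r-2$ (giving the first and second chains) and $q=r-1$ (giving the third and fourth chains), reading off at each step which of the two groups $H^{q-1}(\sE^{(r)})$, $H^{q}(\sE^{(r)})$ vanishes.

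First I would rephrase the three lemmas uniformly in the ranges that actually occur: for $2\le r<\ell$ all $H^q(\sE^{(r)})$ with $q\le r$ vanish (Lemma~\ref{lemma: coh1}); for $r=\ell$ all $H^q(\sE^{(\ell)})$ with $q<\ell$ vanish, whereas $H^\ell(\sE^{(\ell)})$ need not (Lemma~\ref{lemma: coh3}); and for $r>\ell$ one has $H^q(\sE^{(r)})=0$ for all $q<r$ with $q\ne\ell$, together with $H^\ell(\sE^{(r)})=0$ for $r\ge n$, while for $\ell<r\le n-1$ the group $H^\ell(\sE^{(r)})$ equals the tensor product displayed in Lemma~\ref{lemma: coh2}(i), which is nonzero since $\Omega^{r}_{n-1}\ne 0$ for $r\le n-1$. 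I would also note that $\sF_{\ell+n}=0$, because $\mathrm{rank}(\sE)=\dim(\PP)=\ell+n-1$ forces $\bigwedge^{\ell+n}\sE^{\ast}=0$; this is what makes the second and fourth chains terminate at $0$.

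The chains then drop out by inspection. On the diagonal $q=r-2$: starting from $(q,r)=(0,2)$, both obstruction groups vanish for every $2\le r\le\ell$, so $\partial$ is an isomorphism at each step up to $H^{\ell-1}(\sF_{\ell+1}({\bf 1}))$, and there the chain halts precisely because $H^{\ell}(\sE^{(\ell+1)})\ne 0$; it restarts at $H^{\ell+1}(\sF_{\ell+3}({\bf 1}))$, where for every $r\ge\ell+3$ the relevant degrees $q\in\{r-2,r-1\}$ exceed $\ell$, so both obstruction groups vanish and $\partial$ is an isomorphism up to $H^{\ell+n-2}(\sF_{\ell+n}({\bf 1}))=0$. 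On the diagonal $q=r-1$: the same vanishing for $r<\ell$ gives $H^1(\sF_2({\bf 1}))\cong\cdots\cong H^{\ell-1}(\sF_\ell({\bf 1}))$; at $r=\ell$ only $H^{\ell-1}(\sE^{(\ell)})=0$ is available, so the next step is only the inclusion $H^{\ell-1}(\sF_\ell({\bf 1}))\hookrightarrow H^{\ell}(\sF_{\ell+1}({\bf 1}))$; and the high-degree tail, a chain of inclusions ending in $H^{\ell+n-1}(\sF_{\ell+n}({\bf 1}))=0$, is again supplied by the vanishing $H^q(\sE^{(r)})=0$ for $q>\ell$, $q<r$.

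I expect no genuine obstacle here: all the content sits in Lemmas~\ref{lemma: coh1}--\ref{lemma: coh3}. The only thing that needs care is the bookkeeping --- pairing each isomorphism or inclusion with the correct short exact sequence in \eqref{eq: shortexactsequence} and the correct cohomological degree, and correctly locating where each long diagonal splits into a ``low'' part ($r\le\ell$, handled by all three lemmas) and a ``high'' part ($r\ge\ell+3$, handled by Lemma~\ref{lemma: coh2} together with the standing hypothesis $n\ge\ell+2$), the interruption between them being exactly the range $\ell\le r\le n-1$ in which the groups $H^\ell(\sE^{(r)})$ fail to vanish.
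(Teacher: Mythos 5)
Your argument is correct and is exactly the diagram chase the paper intends: the corollary is asserted there as a direct consequence of Lemmas~\ref{lemma: coh1}--\ref{lemma: coh3}, and the same bookkeeping (isomorphism when both flanking groups $H^{q-1}(\sE^{(r)})$, $H^{q}(\sE^{(r)})$ vanish, inclusion when only the first does, termination because $\bigwedge^{\ell+n}\sE^{\ast}=0$) is spelled out in Proposition~\ref{prop: format 224} and Theorem~\ref{thm: 2...2k+2}. One remark: the fourth chain as printed begins at $H^{\ell+2}(\sF_{\ell+2}({\bf 1}))$, which cannot lie on the same diagonal as its endpoint $H^{\ell+n-1}(\sF_{\ell+n}({\bf 1}))$; your reading, with first term $H^{\ell+1}(\sF_{\ell+2}({\bf 1}))$ on the diagonal $q=r-1$, is the one consistent with Theorem~\ref{thm: 2...2k+2} and is what your argument (correctly) proves.
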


\begin{Proposition}\label{prop: format 224}
Let $T$ be a generic tensor of format $(2,2,4)$.
Then $\langle Z_T\rangle$ has dimension six in $\PP(V)\cong\PP^{15}$ and codimension one in $\PP(H_T)$.
This is the last concise format $(2,2,n)$.
\end{Proposition}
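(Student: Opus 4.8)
The plan is to apply the cohomological machinery just developed for the format $\nn = (2,2,n)$ with $n=4$, i.e. the case $\ell = 2$, $n = \ell+2$. The codimension of $\langle Z_T\rangle$ in $\PP(V)$ equals $h^0(\sI_{Z_T}({\bf 1}))$, so the first step is to chase the long exact sequences in cohomology coming from the two short exact sequences in \eqref{eq: shortexactsequence}. From the second sequence, $0\to\sF_2({\bf 1})\to\sE^{(1)}\to\sI_{Z_T}({\bf 1})\to 0$, one has $h^0(\sI_{Z_T}({\bf 1})) = h^0(\sE^{(1)}) - h^0(\sF_2({\bf 1})) + \big(\text{correction from }h^1(\sF_2({\bf 1}))\text{ versus }h^1(\sE^{(1)})\big)$. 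Here $\sE^{(1)} = \bigoplus_{i=1}^{k}\pi_i^\ast\Omega^0_{n_i-1}(1)\otimes(\text{twists})$, whose global sections are easily counted — in fact $H^0(\sE^{(1)}({\bf 1}))$ recovers the defining data, and I expect $h^0(\sE^{(1)}) = \dim V - 1 = 15$ in line with the tautological section $s_T$ — and its higher cohomology vanishes by Bott's formulas. So the whole computation reduces to pinning down $h^0(\sF_2({\bf 1}))$ and $h^1(\sF_2({\bf 1}))$.

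Next I would feed in Corollary \ref{coh: chains} with $\ell = 2$, $n = 4$. The relevant chains become $H^0(\sF_2({\bf 1}))\cong H^1(\sF_3({\bf 1}))$, then $H^3(\sF_5({\bf 1}))\cong\cdots = 0$, then $H^1(\sF_2({\bf 1}))\subset H^2(\sF_3({\bf 1}))$, and $H^4(\sF_4({\bf 1}))\subset\cdots = 0$. Using the top of the Koszul resolution, $\sF_{\ell+n}({\bf 1}) = \bigwedge^{\dim\PP+1}\sE^\ast\otimes\sO({\bf 1})$, whose cohomology is computable directly by Künneth plus Bott, one propagates back through the short exact sequences $0\to\sF_{r+1}({\bf 1})\to\sE^{(r)}\to\sF_r({\bf 1})\to 0$ for $r = \dim\PP,\dim\PP-1,\dots,2$, using Lemmas \ref{lemma: coh1}–\ref{lemma: coh3} to know exactly which $H^q(\sE^{(r)})$ are nonzero (for $\ell = 2$: the first nonvanishing comes from $r = \ell = 2$, namely $H^2(\sE^{(2)}) = H^1(\Omega^0_1(-1))^{\otimes 2}\otimes H^0(\Omega^2_3(3))$, and for $r = 3 = n-1$ one is in the boundary case $(ii)$ of Lemma \ref{lemma: coh2} so $H^2(\sE^{(3)}) = 0$). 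Assembling dimensions along the chain should give $h^0(\sF_2({\bf 1}))$ and $h^1(\sF_2({\bf 1}))$ explicitly as small binomial expressions, and then $h^0(\sI_{Z_T}({\bf 1})) = 15 - h^0(\sF_2({\bf 1})) + h^1(\sF_2({\bf 1}))$. Comparing with $\dim H_T$ from Proposition \ref{prop: dim H_T} (with $D = 4$, $n_k = 4$, so $\dim H_T = \binom{5}{2} - \binom{2}{2} - \binom{2}{2} = 8$, i.e. $\dim\PP(H_T) = 7$) should show the codimension of $\langle Z_T\rangle$ in $\PP(V)\cong\PP^{15}$ is $9$, hence $\dim\langle Z_T\rangle = 6$, exactly one less than $\dim\PP(H_T)$.

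Finally, the remark that $(2,2,n)$ with $n=4$ is "the last concise format" is immediate from the conciseness criterion $n_k \le \prod_{i<k} n_i = 4$: for $n = 4$ the format is concise but on the boundary of conciseness, and for $n \ge 5$ it is non-concise, so every tensor of larger such format lives in a proper subspace of format $(2,2,4)$ and contributes nothing new. This needs only a one-line justification.

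The main obstacle I expect is the bookkeeping in the long exact sequence at the one spot where the chain of inclusions in Corollary \ref{coh: chains} is not a chain of isomorphisms — the step $H^1(\sF_2({\bf 1}))\subset H^{\ell}(\sF_{\ell+1}({\bf 1})) = H^2(\sF_3({\bf 1}))$. One must decide whether this inclusion is an equality or is proper, since that is exactly what governs the $h^1(\sF_2({\bf 1}))$ correction term and hence whether the codimension jumps by one relative to the sub-boundary-format case. Resolving this requires understanding the connecting map $H^1(\sF_3({\bf 1}))\to H^2(\sF_4({\bf 1}))$ (or equivalently the image of the relevant piece of $H^2(\sE^{(2)})$), i.e. tracking how the single nonzero cohomology class surviving from $\sE^{(2)}$ interacts with the Koszul differential. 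I expect that a direct dimension count forces $h^1(\sF_2({\bf 1})) = 1$, which is precisely the source of the codimension-one defect $\langle Z_T\rangle\subsetneq\PP(H_T)$ already observed in \cite{DOT} for this format; making that count rigorous — rather than merely consistent — is the delicate point.
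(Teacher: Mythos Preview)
Your overall strategy matches the paper's, but two computations are off and would not produce the answer you anticipate.

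First, $h^0(\sE^{(1)})=8$, not $15$. By Lemma~\ref{lemma 3.2 DOT}, $\sE^{(1)}\cong\bigoplus_i\pi_i^\ast\Omega^1_{n_i-1}(2)$, and Bott gives $h^0(\Omega^1_{m}(2))=\binom{m+1}{2}$, so $h^0(\sE^{(1)})=\binom{2}{2}+\binom{2}{2}+\binom{4}{2}=8$; these are exactly the linear equations of $H_T$. (The section $s_T$ lives in $H^0(\sE)$, a different bundle.) With your value $15$ and $h^0(\sF_2({\bf 1}))=0$, $h^1(\sF_2({\bf 1}))=1$, your own formula would yield $h^0(\sI_{Z_T}({\bf 1}))=16$, forcing $Z_T=\emptyset$.

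Second, you have the nonvanishing $H^q(\sE^{(r)})$ in the wrong spot. For $r=\ell=2$ the factor $H^1(\Omega^0_1(-1))$ vanishes (Bott requires $d<-1$), so $H^2(\sE^{(2)})=0$; consequently the inclusion $H^1(\sF_2({\bf 1}))\subset H^2(\sF_3({\bf 1}))$ that you flag as delicate is automatically an isomorphism, with nothing to decide. The extra class appears at $r=3$: here case~(i) of Lemma~\ref{lemma: coh2} applies (note $\ell+1=3\le r\le n-1=3$; case~(ii) should read $r\ge n$, as its proof makes clear), and
\[
H^2(\sE^{(3)})\;=\;H^1(\Omega^0_1(-2))^{\otimes 2}\otimes H^0(\Omega^3_3(4))
\]
has dimension $1$. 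Since $H^2(\sF_4({\bf 1}))=H^3(\sF_4({\bf 1}))=0$ (propagated back from $\sF_6=0$ through $r=4,5$), the long exact sequence of $0\to\sF_4({\bf 1})\to\sE^{(3)}\to\sF_3({\bf 1})\to 0$ gives $H^2(\sF_3({\bf 1}))\cong H^2(\sE^{(3)})$ directly. Hence $h^1(\sF_2({\bf 1}))=1$ with no ambiguity, and $h^0(\sI_{Z_T}({\bf 1}))=8+1=9$.
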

\begin{proof}
Following the similar cohomology computation in \cite[Lemma 3.5]{DOT}, we have that the vanishing of the cohomologies $H^q(\sE^{(r)})$, $q=r-1,r-2$, does not hold anymore. Furthermore, this means that computing $H^r(\sE^{(r)})$ is useful in many cases. In this case one computes that the only non-zero dimensions $h^q(\sE^{(r)})$ are
\[
h^2(\sE^{(3)})=1\,,\quad h^3(\sE^{(3)})=1\,.
\]
Consider the first short exact sequence in \eqref{eq: shortexactsequence}. The corresponding long exact sequence in cohomology is
\begin{equation}\label{longcoh}
\begin{gathered}
\cdots\to H^{r-2}(\sE^{(r)})\to H^{r-2}(\sF_r({\bf 1}))\to H^{r-1}(\sF_{r+1}({\bf 1}))\to H^{r-1}(\sE^{(r)})\to\\
\to H^{r-1}(\sF_r({\bf 1}))\to H^{r}(\sF_{r+1}({\bf 1}))\to H^{r}(\sE^{(r)})\to\cdots
\end{gathered}
\end{equation}
The sequence \eqref{longcoh} yields the following inclusions and isomorphisms:
\begin{itemize}
    \item $H^{r-2}(\sF_r({\bf 1}))\cong H^{r-1}(\sF_{r+1}({\bf 1}))$ and $H^{r-1}(\sF_r({\bf 1}))\cong H^{r}(\sF_{r+1}({\bf 1}))$ for $r\neq 3$
    \item $H^{1}(\sF_{3}({\bf 1}))\subset H^{2}(\sF_{4}({\bf 1}))$ and $H^2(\sF_3({\bf 1}))\cong H^2(\sE^{(3)})$ for $r=3$.
\end{itemize}
In turn, we get that
\begin{itemize}
    \item $H^0(\sF_2({\bf 1}))\cong H^1(\sF_3({\bf 1}))\subset H^2(\sF_4({\bf 1}))\cong H^3(\sF_5({\bf 1}))\cong H^4(\sF_6({\bf 1}))=0$
    \item $H^1(\sF_2({\bf 1}))\cong H^2(\sF_3({\bf 1}))$ and $H^3(\sF_4({\bf 1}))\cong H^4(\sF_5({\bf 1}))\cong H^5(\sF_6({\bf 1}))=0$
\end{itemize}
Therefore, if we take the second short exact sequence in \eqref{eq: shortexactsequence} and we compute the corresponding long exact sequence in cohomology, we get that
$$
0=H^0(\sF_2({\bf 1}))\to H^0(\sE^{(1)})\to H^0(\sI_{Z_T}({\bf 1}))\to H^1(\sF_2({\bf 1}))\to H^1(\sE^{(1)})=0\,,
$$
thus $h^0(\sI_{Z_T}({\bf 1}))=h^0(\sE^{(1)})+h^1(\sF_2({\bf 1}))=8+1$. This means that $\langle Z_T\rangle$ has codimension $9$ in $\PP(V)\cong\PP^{15}$, that is $\dim \langle Z_T\rangle=15-9=6$.\qedhere
\end{proof}

\begin{Theorem}\label{thm: 2...2k+2}
Let $T$ be a generic order $\ell+1$ tensor of format $(2,\ldots,2,\ell+2)$.
Then the projective span of singular $(\ell+1)$-tuples has dimension
\begin{equation}\label{eq: dim span k+1 order 2...2k+2}
\dim(\langle Z_T\rangle)=2^\ell(\ell+2)-(\ell+1)-\binom{\ell+2}{2}-\max\{0,(\ell-1)^\ell-(\ell-2)^\ell(\ell+2)\}\,.
\end{equation}
In particular $\langle Z_T\rangle=\PP(H_T)$ for $\ell\ge 4$.
\end{Theorem}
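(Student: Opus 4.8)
The plan is to compute $h^0(\sI_{Z_T}({\bf 1}))$, i.e. the codimension of $\langle Z_T\rangle$ in $\PP(V)$, by threading the two short exact sequences in \eqref{eq: shortexactsequence} through their long exact cohomology sequences, exactly as in Proposition \ref{prop: format 224}, but now using the vanishing results of Lemmas \ref{lemma: coh1}, \ref{lemma: coh2}, \ref{lemma: coh3} assembled in Corollary \ref{coh: chains}. First I would set $n=\ell+2$ and observe that by those lemmas the only cohomology of $\sE^{(r)}$ that can survive below the diagonal is in degree $q=\ell$, and only for $\ell\le r\le n-1=\ell+1$; concretely the candidates are $H^\ell(\sE^{(\ell)})$ and $H^\ell(\sE^{(\ell+1)})$, each isomorphic to $\bigl(\bigotimes_{i=1}^\ell H^1(\Omega^0_1(-r+1))\bigr)\otimes H^0(\Omega^r_{n-1}(r+1))$, whose dimensions are computed by Bott's formulas \eqref{eq: coh Omega}: $h^1(\Omega^0_1(-r+1))$ equals $r-1$ (from the third case, $q=m=1$, $r=0$, $d=-r+1<-1$) and $h^0(\Omega^r_{n-1}(r+1))=\binom{n-1}{r+1}\binom{r}{r}=\binom{n}{r+1}-\binom{n-1}{r}$-type binomials; plugging $n=\ell+2$ gives the two numbers $(\ell-1)^\ell$ (for $r=\ell$) and $(\ell-2)^\ell(\ell+2)$-type expression (for $r=\ell+1$), which is where the term $\max\{0,(\ell-1)^\ell-(\ell-2)^\ell(\ell+2)\}$ in \eqref{eq: dim span k+1 order 2...2k+2} will come from.

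Next I would run the chase. From the first sequence in \eqref{eq: shortexactsequence} and Corollary \ref{coh: chains}, the chain $H^0(\sF_2({\bf1}))\cong\cdots\cong H^{\ell-1}(\sF_{\ell+1}({\bf1}))$ together with the vanishing $H^{\ell+1}(\sF_{\ell+3}({\bf1}))=\cdots=0$ pins down $H^0(\sF_2({\bf1}))$ in terms of the single surviving $\sE^{(\ell)}$ and $\sE^{(\ell+1)}$ contributions: tracking the portion of the long exact sequence around $r=\ell$ and $r=\ell+1$ shows $H^0(\sF_2({\bf1}))$ is the cokernel/kernel of the induced maps between $H^\ell(\sE^{(\ell+1)})\to H^\ell(\sF_{\ell+1}({\bf1}))$ and $H^\ell(\sE^{(\ell)})$, so that $h^0(\sF_2({\bf1}))=\max\{0,\,h^\ell(\sE^{(\ell)})-h^\ell(\sE^{(\ell+1)})\}$ once one checks the relevant connecting map is injective (this is the place the $\max$ with $0$ is forced). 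Then feeding this into the second sequence in \eqref{eq: shortexactsequence},
\[
0\to H^0(\sF_2({\bf1}))\to H^0(\sE^{(1)})\to H^0(\sI_{Z_T}({\bf1}))\to H^1(\sF_2({\bf1}))\to H^1(\sE^{(1)}),
\]
together with $h^0(\sE^{(1)})=\sum_i n_i-\binom{n_i}{2}$-type count, $h^1(\sE^{(1)})=0$, and the identification $h^1(\sF_2({\bf1}))=h^0(\sF_2({\bf1}))$ from the chain $H^1(\sF_2({\bf1}))\cong\cdots$, yields $h^0(\sI_{Z_T}({\bf1}))=h^0(\sE^{(1)})+h^0(\sF_2({\bf1}))$, which after subtracting from $\dim\PP(V)=2^\ell(\ell+2)-1$ gives exactly \eqref{eq: dim span k+1 order 2...2k+2}. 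Finally, for $\ell\ge4$ one checks the elementary inequality $(\ell-1)^\ell\le(\ell-2)^\ell(\ell+2)$, so the $\max$ term vanishes; comparing the resulting dimension with Proposition \ref{prop: dim H_T} (case $n_k\ge D=2^\ell$, which holds since $\ell+2\le 2^\ell$ for $\ell\ge2$) shows $\dim\langle Z_T\rangle=\dim\PP(H_T)$, and since $\langle Z_T\rangle\subseteq\PP(H_T)$ always, equality follows.

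The main obstacle I expect is not the Bott computations themselves but justifying that the connecting homomorphisms in the long exact sequences behave as needed — specifically, showing that the map $H^\ell(\sE^{(\ell+1)})\to H^\ell(\sF_{\ell+1}({\bf1}))$ (equivalently, the relevant piece relating the two surviving $\sE^{(r)}$'s) is injective, so that the net contribution to $h^0(\sF_2({\bf1}))$ is genuinely the difference $h^\ell(\sE^{(\ell)})-h^\ell(\sE^{(\ell+1)})$ truncated at $0$ rather than something larger. In the sub-boundary/boundary cases treated in \cite{DOT} this subtlety does not arise because the relevant $\sE^{(r)}$ cohomology vanishes outright; here, since it does not, one must argue the map is an isomorphism onto its image, presumably by identifying it with an explicit Koszul-type map on global sections of differential forms, or by a dimension count forcing injectivity from the surrounding exact terms. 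The elementary arithmetic verification that $(\ell-1)^\ell\le(\ell-2)^\ell(\ell+2)$ for $\ell\ge4$ (it fails for $\ell=2,3$, consistent with the genuinely defective behavior in formats like $(2,2,4)$ of Proposition \ref{prop: format 224}) is routine and I would relegate it to a short lemma or an inline estimate.
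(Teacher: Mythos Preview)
Your overall strategy is the paper's strategy, but several threads are tangled and the decisive step is not the one you isolate.

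First, the bookkeeping. The roles of $H^0(\sF_2({\bf 1}))$ and $H^1(\sF_2({\bf 1}))$ are swapped in your sketch. Extending the first chain of Corollary~\ref{coh: chains} one more step (through $H^\ell(\sF_{\ell+2}({\bf 1}))$ and onward to a term that vanishes) gives $H^0(\sF_2({\bf 1}))=0$ outright; it is $H^1(\sF_2({\bf 1}))\cong H^{\ell-1}(\sF_\ell({\bf 1}))$ that may be nonzero, and there is no identification $h^1(\sF_2({\bf 1}))=h^0(\sF_2({\bf 1}))$. You also have the two surviving dimensions interchanged: in fact $h^\ell(\sE^{(\ell)})=(\ell-2)^\ell(\ell+2)$ and $h^\ell(\sE^{(\ell+1)})=(\ell-1)^\ell$; your two sign errors happen to cancel in the final formula. (A minor point: the format $(2,\dots,2,\ell+2)$ satisfies $n_k=\ell+2\le 2^\ell=D$, so when comparing with Proposition~\ref{prop: dim H_T} you are in the first case, not the second.)

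More importantly, the obstacle you flag is indeed the crux, but the paper does not dispatch it by an abstract injectivity argument or a dimension count of surrounding terms. From the long exact sequence at $r=\ell+1$ one gets an \emph{isomorphism} $H^\ell(\sE^{(\ell+1)})\xrightarrow{\sim} H^\ell(\sF_{\ell+1}({\bf 1}))$ (the flanking terms vanish), so the sequence at $r=\ell$ identifies $H^{\ell-1}(\sF_\ell({\bf 1}))$ with $\ker\bigl(\gamma\colon H^\ell(\sE^{(\ell+1)})\to H^\ell(\sE^{(\ell)})\bigr)$, where $\gamma$ is the map induced in cohomology by the Koszul differential. The paper then uses Theorem~\ref{thm: Bott} to identify the two groups concretely as $(S^{\ell-2}\C^2)^{\otimes\ell}$ and $(S^{\ell-3}\C^2)^{\otimes\ell}\otimes\bigwedge^{\ell+1}\C^{\ell+2}$, and recognizes $\gamma$ explicitly as the contraction
\[
f_1\otimes\cdots\otimes f_\ell\ \longmapsto\ \sum_{i_1,\dots,i_\ell}\partial_{i_1}f_1\otimes\cdots\otimes\partial_{i_\ell}f_\ell\otimes T_{i_1\cdots i_\ell}
\]
determined by the tensor $T$. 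It is the genericity of $T$ that then forces $\gamma$ to have maximal rank $\min\{(\ell-1)^\ell,(\ell-2)^\ell(\ell+2)\}$, whence $h^1(\sF_2({\bf 1}))=\max\{0,(\ell-1)^\ell-(\ell-2)^\ell(\ell+2)\}$. Without this explicit $T$-dependent description of $\gamma$ there is no a priori reason the rank should be maximal, so this step cannot be absorbed into a routine diagram chase.
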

\begin{proof}
We start noticing that, by Lemmas \ref{lemma: coh1}, \ref{lemma: coh2}, \ref{lemma: coh3} and Corollary \ref{coh: chains}, only $H^\ell(\sE^{(\ell)})$ and $H^\ell(\sE^{(\ell+1)})$ are non-zero.
Furthermore, it holds
\[
\begin{gathered}
H^0(\sF_2({\bf 1}))\cong\cdots\cong H^{\ell-1}(\sF_{\ell+1}({\bf 1}))\subset H^\ell(\sF_{\ell+2}({\bf 1}))\cong\cdots\cong H^{2\ell}(\sF_{2\ell+2}({\bf 1}))=0\\
H^1(\sF_2({\bf 1}))\cong\cdots\cong H^{\ell-1}(\sF_\ell({\bf 1}))\subset H^\ell(\sF_{\ell+1}({\bf 1}))\\
H^{\ell+1}(\sF_{\ell+2}({\bf 1}))\subset\cdots\subset H^{2\ell+1}(\sF_{2\ell+2}({\bf 1}))=0\,.
\end{gathered}
\]
In simple terms, to determine the dimension of $\langle Z_T\rangle$ we need to compute $h^{\ell-1}(\sF_{\ell}({\bf 1}))$.
We first consider the long exact sequence in cohomology coming from equation \eqref{eq: shortexactsequence}. For $r=\ell+1$ we have:
\begin{equation*}
\begin{aligned}
0\to H^{\ell}(\sE^{(\ell+1)})\to H^{\ell}(\sF_{\ell+1}({\bf 1}))\to H^{\ell+1}(\sF_{\ell+2}({\bf 1}))=0\,.
\end{aligned}
\end{equation*}
Thus $h^{\ell}(\sE^{(\ell+1)})=h^{\ell}(\sF_{\ell+1}({\bf 1}))$.
For $r=\ell$ we get:
\[
0\to H^{\ell-1}(\sF_\ell({\bf 1}))\xrightarrow{\alpha}H^\ell(\sF_{\ell+1}({\bf 1}))\xrightarrow{\beta}H^{\ell}(\sE^{(\ell)})\to\cdots
\]
Notice that since $\alpha$ is injective, it is enough to determine the rank of $\beta$ to determine $h^{\ell-1}(\sF_\ell({\bf 1}))$. We recall the commutative diagram
\[
\begin{tikzcd}
H^\ell(\sE^{(\ell+1)}) \arrow[rr, "\gamma"] \arrow[rd, "\cong"] & & H^\ell(\sE^{(\ell)})\\
& H^\ell(\sF_{\ell+1}(1)) \arrow[ru, "\beta"] &
\end{tikzcd}
\]
We have that the rank of $\gamma$ is equal to the rank of $\beta$.

The associated weights to  $H^\ell(\sE^{(\ell)})$ and $H^\ell(\sE^{(\ell+1)})$ are respectively:
\begin{enumerate}
    \item $(1-\ell)\lambda_1^{(1)}\otimes\cdots\otimes (1-\ell)\lambda_1^{(\ell)}\otimes \lambda_{\ell+1}^{(\ell+1)}$.
    \item$-\ell\lambda_1^{(1)}\otimes\cdots\otimes -\ell\lambda_1^{(\ell)}\otimes \lambda_{\ell+2}^{(\ell+1)}$.
\end{enumerate}
Thus using Theorem \ref{thm: Bott}, we have that
\begin{enumerate}
    \item $H^\ell(\sE^{(\ell)})\cong G_{\ell-3}\otimes\cdots\otimes G_{\ell-3}\otimes G_{\ell+1}\cong \left(S^{\ell-3}\C^2\right)^{\otimes \ell}\otimes \bigwedge^{\ell+1}\C^{\ell+2}$.
    \item$H^\ell(\sE^{(\ell+1)})\cong G_{\ell-2}\otimes\cdots\otimes G_{\ell-2}\otimes G_{\ell+2}\cong \left(S^{\ell-2}\C^2\right)^{\otimes \ell}\otimes \bigwedge^{\ell+2}\C^{\ell+2}$.
\end{enumerate}
The map
\[
\gamma\colon(S^{\ell-2}\C^2)^{\otimes \ell}\to (S^{\ell-3}\C^2)^{\otimes \ell}\otimes \bigwedge^{\ell+1}\C^{\ell+2}
\]
acts as a contraction:
\[
\gamma(f_1\otimes\cdots\otimes f_\ell)=\sum_{i_1,\cdots,i_\ell=1}^2\partial_{i_1}f_1\otimes\cdots\otimes \partial_{i_\ell}f_\ell\otimes T_{i_1\cdots i_\ell},
\]
where
\[
T=\sum_{i_1,\cdots,i_\ell=1}^2e_{1,i_1}\otimes\cdots\otimes e_{\ell,i_\ell}\otimes T_{i_1\cdots i_\ell}\,,\quad T_{i_1\cdots i_\ell}\coloneqq \sum_{j=1}^{\ell+2}t_{i_1\cdots i_\ell,j}e_{\ell+1,j}\in \C^{\ell+2}\,.
\]
Each element $f_j\in S^{\ell-2}\C^2$ can be written as $f(x_{i,1},x_{i,2})=\sum_{d_j=0}^{\ell-2}c_{j,d_j}x_{j,1}^{\ell-2-d_j}\cdot x_{j,2}^{d_j}$. Hence a basis of $(S^{\ell-2}\C^2)^{\otimes \ell}$ is $\{\bigotimes_{j=1}^\ell x_{j,1}^{\ell-2-d_j}x_{j,2}^{d_j}\mid 0\le d_j\le \ell-2\}$. In particular
\[
\gamma\left(\bigotimes_{j=1}^\ell x_{j,1}^{\ell-2-d_j}x_{j,2}^{d_j}\right)=\sum_{i_1,\cdots,i_\ell=1}^2\bigotimes_{j=1}^\ell\partial_{i_j}(x_{j,1}^{\ell-2-d_j}x_{j,2}^{d_j})\otimes T_{i_1\cdots i_\ell}\,.
\]
If $T$ is generic, then the rank of $\gamma$ is maximal, and coincides with the minimum between the dimensions of the domain and the codomain of $\gamma$. More precisely, we conclude that
\[
\mathrm{rank}(\beta)=\mathrm{rank}(\gamma)=\min\{(\ell-1)^\ell,(\ell-2)^\ell(\ell+2)\}=
\begin{cases}
(\ell-1)^\ell & \text{for $\ell\ge 4$}\\
(\ell-2)^\ell(\ell+2) & \text{for $\ell=3$}\,.
\end{cases}
\]
This implies that
\[
h^{\ell-1}(\sF_\ell({\bf 1}))=\dim(\ker(\beta))=h^\ell(\sF_{\ell+1}({\bf 1}))-\mathrm{rank}(\beta)=(\ell-1)^\ell-\mathrm{rank}(\gamma)=0
\]
for all $\ell\ge 4$, in turn $h^{1}(\sF_2({\bf 1}))=0$. Therefore $h^0(\sI_{Z_T}({\bf 1}))=h^0(\sE^{(1)})=\ell+\binom{\ell+2}{2}$, hence
\[
\dim(\langle Z_T\rangle)=\dim(\PP(H_T))=2^\ell(\ell+2)-(\ell+1)-\binom{\ell+2}{2}\,,
\]
which agrees with \eqref{eq: dim span k+1 order 2...2k+2}.
\end{proof}

With similar techniques, we are able to prove the following result.

\begin{Theorem}\label{thm: format 23n}
Let $T$ be a generic order three tensor of format $(2,3,n)$.
\begin{itemize}
    \item[$(i)$] If $n=5$, then $\langle Z_T\rangle$ has either dimension $13$ or $14$ in $\PP(V)\cong\PP^{29}$. The expected dimension is $13$, hence there are $2$ more linear relations among the singular triples of $T$.
    \item[$(ii)$] If $n=6$, $\langle Z_T\rangle$ has either dimension $13$ or $14$ in $\PP(V)\cong\PP^{35}$. The expected dimension is $13$, hence there are $3$ more linear relations among the singular triples of $T$. This is the last concise order three format $(2,3,n)$.
\end{itemize}
\end{Theorem}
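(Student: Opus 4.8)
The plan is to run the cohomological machinery of Proposition \ref{prop: format 224} and Theorem \ref{thm: 2...2k+2} on the triple product $\PP=\PP(V_1)\times\PP(V_2)\times\PP(V_3)=\PP^1\times\PP^2\times\PP^{n-1}$, where $\mathrm{rank}(\sE)=\dim(\PP)=n+2$. Since for generic $T$ the section $s_T$ cuts out $Z_T$ (\cite[Proposition 2.6]{DOT}), we have $\dim\langle Z_T\rangle=\dim\PP(V)-h^0(\sI_{Z_T}({\bf 1}))$, so the whole problem reduces to computing $h^0(\sI_{Z_T}({\bf 1}))$ through the two Koszul short exact sequences \eqref{eq: shortexactsequence}.

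First I would pin down the non-vanishing cohomology groups $H^q(\sE^{(r)})$ by combining the isomorphism \eqref{eq: iso omega}, K\"unneth's formula (Theorem \ref{thm: Kunneth}) and Bott's formulas \eqref{eq: coh Omega}. The $\PP^1$ factor behaves exactly as in Lemma \ref{lemma: coh1}: for $r\ge 2$ it contributes only through $H^1(\Omega^{j_1}_1(1+2j_1-r))=H^1(\sO_{\PP^1}(1-r))$, which is non-zero of dimension $r-2$ precisely when $r\ge 3$; on $\PP^2$ the sheaf $\Omega^{j_2}_2(1+2j_2-r)$ has cohomology only in the narrow numerical windows of \eqref{eq: coh Omega} (for $r\ge 4$ only $H^2$ survives, and only for small $j_2$), and on $\PP^{n-1}$ the sheaf $\Omega^{j_3}_{n-1}(1+2j_3-r)$ contributes at $H^0$ when $j_3$ is large and at $H^{n-1}$ when $j_3$ is small. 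Imposing $j_1+j_2+j_3=r$ and $j_3\le n-1$ singles out a very short list of surviving triples $(j_1,j_2,j_3)$: for $n=5$ I expect only $H^3(\sE^{(3)})$, $H^3(\sE^{(4)})$, $H^5(\sE^{(5)})$ to be non-zero, besides $h^0(\sE^{(1)})=\binom22+\binom32+\binom52=14$, and for $n=6$ the analogous, slightly longer list, with $h^0(\sE^{(1)})=\binom22+\binom32+\binom62=19$. This enumeration is the most tedious part, but it is purely combinatorial.

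Next, I would feed this list into the long exact sequences of \eqref{eq: shortexactsequence}, exactly as in \eqref{longcoh} and in the proof of Theorem \ref{thm: 2...2k+2}. Because $\mathrm{rank}(\sE)=\dim(\PP)$, the bundle $\sF_{\dim(\PP)+1}$ vanishes, so the chains of isomorphisms among the $H^q(\sF_r({\bf 1}))$ forced by the vanishing of most $H^q(\sE^{(r)})$ terminate at zero; this gives $H^0(\sF_2({\bf 1}))=0$, while $H^q(\sE^{(1)})=0$ for $q\ge 1$ is immediate. The long exact sequence of the second sequence in \eqref{eq: shortexactsequence} then yields $h^0(\sI_{Z_T}({\bf 1}))=h^0(\sE^{(1)})+h^1(\sF_2({\bf 1}))$, so one is left with $h^1(\sF_2({\bf 1}))$. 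Chasing the chain backwards expresses $h^1(\sF_2({\bf 1}))$ as $h^q(\sF_{r+1}({\bf 1}))-\mathrm{rank}(h)$ for an explicitly computed dimension $h^q(\sF_{r+1}({\bf 1}))$ (for $n=5$ this is $h^3(\sF_4({\bf 1}))=2$) and one connecting homomorphism $h\colon H^q(\sF_{r+1}({\bf 1}))\to H^q(\sE^{(r)})$; as in the commutative-diagram argument used for $\gamma$ in the proof of Theorem \ref{thm: 2...2k+2}, the rank of $h$ equals that of the map induced on cohomology by the Koszul differential $\sE^{(r+1)}\to\sE^{(r)}$, namely a contraction against $T$ between two modules described by Bott's theorem (Theorem \ref{thm: Bott}).

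The hard part is to determine this last rank. In the format $(2,\dots,2,n)$ the corresponding contraction is between tensor powers of symmetric powers of $\C^2$ tensored with one-dimensional exterior powers, whose generic rank is immediate; for $(2,3,n)$ the module attached to the $\PP^2$ factor is a genuine Schur functor, and the map may a priori have rank $0$ or $1$ — a direct inspection of which K\"unneth components the Koszul differential joins (and of the fact that the intermediate terms it passes through are acyclic) suggests that the induced map actually vanishes, but I do not expect to be able to prove this in general. Consequently $h^1(\sF_2({\bf 1}))$ is the expected value or one less, so $\dim\langle Z_T\rangle\in\{13,14\}$ both for $n=5$ in $\PP^{29}$ and for $n=6$ in $\PP^{35}$, with $13$ the expected value. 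Combining this with Proposition \ref{prop: dim H_T}, which gives $\dim\PP(H_T)=15$ for $n=5$ and $\dim\PP(H_T)=16$ for $n=6$, produces the stated $2$, respectively $3$, extra linear relations among the singular triples; and since $(2,3,n)$ is concise exactly when $n\le n_1n_2=6$, the format $(2,3,6)$ is indeed the last concise order three format of that shape.
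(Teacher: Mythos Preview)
Your proposal is correct and follows essentially the same approach the paper indicates (the paper itself gives no explicit proof, only the remark ``with similar techniques'' pointing to Proposition~\ref{prop: format 224} and Theorem~\ref{thm: 2...2k+2}); your enumeration of the non-vanishing $H^q(\sE^{(r)})$ for $n=5$, the values $h^0(\sE^{(1)})=14,19$, and the reduction of $h^1(\sF_2({\bf 1}))$ to the rank of a single contraction map $\gamma$ are all on target. In fact your K\"unneth-component observation---that the Koszul differential sends the $(0,0,4)$ summand only to $(0,0,3)$, whose $H^3$ vanishes---actually forces $\gamma=0$ and hence $\dim\langle Z_T\rangle=13$ for $n=5$, slightly sharper than what the theorem asserts; the paper obtains this exact value only later, by the symbolic verification in Example~\ref{ex: 2x3x5}.
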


\section{Equations of the span of singular tuples in special formats}\label{sec: relations}

In Section \ref{sec: cohomology} we computed the dimension of $\langle Z_T\rangle$ for a generic tensor $T$ with the aid of cohomology tools. This section is more oriented towards the computation of the equations of $\langle Z_T\rangle$. At the current status of research, in this direction the range of possible formats covered is smaller than the more general formats studied in Section \ref{sec: cohomology}, and the results mostly rely on symbolic computations with Macaulay2 \cite{GS}. However, as explained soon, for the formats studied we can confirm our general conjecture that the tensor $T$ always belongs to the span $\langle Z_T\rangle$.

\subsection{Equations of \texorpdfstring{$\langle Z_T\rangle$}{ZT} in the format \texorpdfstring{$\nn=(2,2,n)$}{22n}}\label{sec: 22n equations}

In this format, there is only one interesting case that is for $n=4$. The format is non-concise for $n\ge 5$.

\medskip
Consider a generic tensor $U=(u_{ijk})$ of format $\nn=(2,2,4)$, and consider the set $Z_U\subset\PP(\C^2\otimes\C^2\otimes\C^4)$. From Theorem \ref{thm: FO formula}, we have that $\dim(Z_U)=0$ and $|Z_U|=8$ for a generic $U$. The 8 singular triples of $U$ may be computed numerically from the code presented in Section \ref{sec: julia}.

On one hand, the projectivized critical space $\PP(H_U)\subset\PP(\C^2\otimes\C^2\otimes\C^4)\cong\PP^{15}$ has dimension 7. Indeed, the linear relations coming from the contractions of $U$ are $\binom{2}{2}+\binom{2}{2}+\binom{4}{2}=8$ and are pairwise linearly independent by Proposition \ref{prop: dim H_T}.
On the other hand, the projective span $\langle Z_U\rangle$ is strictly contained in $\PP(H_U)$: indeed, we showed in Proposition \ref{prop: format 224} that $\dim(\langle Z_U\rangle)=6$.
Therefore, there exists an additional linear relation among the singular triples of $U$.
In Section \ref{sec: julia} we explain how to double-check this numerically by tensorizing the singular tuples previously computed.

The additional linear relation may be obtained in this way.
Let $(\xx_1,\xx_2,\xx_3)$ be a singular triple of $U$. By definition the two vectors $U(\xx_1\otimes\xx_2)$ and $\xx_3$ are proportional. From this fact we build the $4\times 4$ matrix
\[
A \coloneqq
\begin{bmatrix}
U(\xx_1\otimes\xx_2) & \xx_3 & U_{(1,1)} & U_{(1,2)}
\end{bmatrix}^T\,,
\]
where $U_{(i,j)}=(u_{ij1},\ldots,u_{ij6})$ for all $(i,j)\in[2]\times[2]$.
If $U$ is generic, we have that $\mathrm{rank}(A)=3$.
Now let $\xx_1'=(x_{1,2},x_{1,1})$ and consider the matrix
\[
A' \coloneqq
\begin{bmatrix}
U(\xx_1'\otimes\xx_2) & \xx_3 & U_{(2,1)} & U_{(2,2)}
\end{bmatrix}^T\,.
\]
In this case the first two rows of $A'$ are not proportional. We checked symbolically that still $\mathrm{rank}(A')=3$, hence the determinant of $A'$, which is linear in the coordinates $z_{ijk}$ of $\PP(\C^2\otimes\C^2\otimes\C^4)$, is contained in the ideal of $\langle Z_U\rangle$. We verified also that $\det(A')$ is linearly independent from the equations of $H_T$. Hence $\det(A')$ can be considered as ``the'' unknown additional relation among the singular triples of $U$.

Developing $\det(A')$ using the Laplace expansion along the first two rows of $A'$ and taking into account the relations $z_{ijk}=x_{1,i}x_{2,j}x_{3,k}$, we get that (we omit the computation) 
\[
\det(A')=
\begin{vmatrix}
{z}_{211}&{z}_{212}&{z}_{213}&{z}_{214}\\
{u}_{111}&{u}_{112}&{u}_{113}&{u}_{114}\\
{u}_{211}&{u}_{212}&{u}_{213}&{u}_{214}\\
{u}_{221}&{u}_{222}&{u}_{223}&{u}_{224}
\end{vmatrix}
+
\begin{vmatrix}
{z}_{221}&{z}_{222}&{z}_{223}&{z}_{224}\\
{u}_{121}&{u}_{122}&{u}_{123}&{u}_{124}\\
{u}_{211}&{u}_{212}&{u}_{213}&{u}_{214}\\
{u}_{221}&{u}_{222}&{u}_{223}&{u}_{224}
\end{vmatrix}\,.
\]
From this expression, we immediately observe that this additional relation is satisfied by the tensor $U$ itself, meaning that $[U]\in\langle Z_U\rangle$. Note the change of indices with respect to
\[
\det(A)=
\begin{vmatrix}
{z}_{211}&{z}_{212}&{z}_{213}&{z}_{214}\\
{u}_{211}&{u}_{212}&{u}_{213}&{u}_{214}\\
{u}_{111}&{u}_{112}&{u}_{113}&{u}_{114}\\
{u}_{121}&{u}_{122}&{u}_{123}&{u}_{124}
\end{vmatrix}
+
\begin{vmatrix}
{z}_{221}&{z}_{222}&{z}_{223}&{z}_{224}\\
{u}_{221}&{u}_{222}&{u}_{223}&{u}_{224}\\
{u}_{111}&{u}_{112}&{u}_{113}&{u}_{114}\\
{u}_{121}&{u}_{122}&{u}_{123}&{u}_{124}
\end{vmatrix}\,.
\]
Both determinants may be seen as bihomogeneous polynomials in the variables $u_{ijk}$ and $z_{ijk}$ of bidegree $(3,1)$. What is more, observe that in the construction of $A$ we have made a choice for the last two rows. In general, there are $6=\binom{4}{2}$ possibilities to complete the matrix $A$ using the vectors $(u_{ijk})_k$. Consider also the vector $\xx_2'=(x_{2,2},x_{2,1})$ and build the $9\times 4$ matrix
\begin{equation}\label{eq: 9x4 matrix}
\begin{bmatrix}
U(\xx_1'\otimes\xx_2) & U(\xx_1\otimes\xx_2') & U(\xx_2\otimes\xx_1) & U(\xx_2'\otimes\xx_1') & \xx_3 & U_{(1,1)} & U_{(1,2)} & U_{(2,1)} & U_{(2,2)}
\end{bmatrix}^T\,.
\end{equation}
We computed symbolically all maximal minors of the previous matrix. There are exactly $6$ of them which belong to the ideal of $\langle Z_U\rangle$. One of them is exactly the determinant of $A'$ studied above. The other five are obtained considering all remaining choices of pairs of rows $(U_{(i_1,j_1)},U_{(i_2,j_2)})$ among the last four rows, the row of $\xx_3$ and one of the first four rows (according to symmetries of the pair $(U_{(i_1,j_1)},U_{(i_2,j_2)})$ chosen).

\subsection{Equations of \texorpdfstring{$\langle Z_T\rangle$}{ZT} in the format \texorpdfstring{$\nn=(2,3,n)$}{23n}}\label{sec: 23n equations}

In this case, there are two interesting formats between the sub-boundary format and the non-concise format, precisely for $n\in\{5,6\}$.

\begin{Example}\label{ex: 2x3x5}
Consider a generic tensor $U=(u_{ijk})$ of format $\nn=(2,3,5)$. It admits $18$ singular triples, and by Proposition \ref{prop: dim H_T} the projectivized critical space $\PP(H_U)\subset\PP(\C^2\otimes\C^3\otimes\C^5)\cong\PP^{29}$ has dimension $15$.
Let $Z_U\subset\PP(\C^2\otimes\C^3\otimes\C^5)$. By Theorem \ref{thm: FO formula}, we have that $|Z_U|=18$ for a generic $U$. The projective span $\langle Z_U\rangle$ is strictly contained in $\PP(H_U)$: indeed we showed in Theorem \ref{thm: format 23n}$(i)$ that $13\le\dim(\langle Z_U\rangle)\le 14$. We verified symbolically that there exist two new relations among the singular triples, thus proving that $\dim(\langle Z_U\rangle)=13$. We write them as determinants of $5\times 5$ matrices:
\begin{align*}
\det(A_1) &=
\begin{vmatrix}
U(\xx_1'\otimes\xx_2) & \xx_3 & U_{(1,1)} & U_{(1,2)} & U_{(1,3)}
\end{vmatrix}
=
\begin{vmatrix}
T_{(1,1)}\\
U_{(2,1)}\\
U_{(1,1)}\\
U_{(1,2)}\\
U_{(1,3)}
\end{vmatrix}
+
\begin{vmatrix}
T_{(1,2)}\\
U_{(2,2)}\\
U_{(1,1)}\\
U_{(1,2)}\\
U_{(1,3)}
\end{vmatrix}
+
\begin{vmatrix}
T_{(1,3)}\\
U_{(2,3)}\\
U_{(1,1)}\\
U_{(1,2)}\\
U_{(1,3)}
\end{vmatrix}\\
\det(A_2) &=
\begin{vmatrix}
U(\xx_1'\otimes\xx_2) & \xx_3 & U_{(2,1)} & U_{(2,2)} & U_{(2,3)}
\end{vmatrix}
=
\begin{vmatrix}
T_{(2,1)}\\
U_{(1,1)}\\
U_{(2,1)}\\
U_{(2,2)}\\
U_{(2,3)}
\end{vmatrix}
+
\begin{vmatrix}
T_{(2,2)}\\
U_{(1,2)}\\
U_{(2,1)}\\
U_{(2,2)}\\
U_{(2,3)}
\end{vmatrix}
+
\begin{vmatrix}
T_{(2,3)}\\
U_{(1,3)}\\
U_{(2,1)}\\
U_{(2,2)}\\
U_{(2,3)}
\end{vmatrix}\,.
\end{align*}
Also in this case we have chosen specific vectors $(u_{ijk})_k$ to form the matrices $A_1$ and $A_2$, but there are of course other choices and all possibilities can be obtained by computing all maximal minors of a large matrix similar to the one in \eqref{eq: 9x4 matrix}.
\end{Example}

\begin{Example}\label{ex: 2x3x6}
Consider a generic tensor $U=(u_{ijk})$ of format $\nn=(2,3,6)$.
It admits $18$ singular triples, and by Proposition \ref{prop: dim H_T} the projectivized critical space $\PP(H_U)\subset\PP(\C^2\otimes\C^3\otimes\C^6)\cong\PP^{35}$ has dimension $16$.
Let $Z_U\subset\PP(\C^2\otimes\C^3\otimes\C^6)$. By Theorem \ref{thm: FO formula}, we have that $|Z_U|=18$ for a generic $U$. Also in this case the projective span $\langle Z_U\rangle$ is strictly contained in $\PP(H_U)$. By Theorem \ref{thm: format 23n}$(ii)$ we have that $13\le\dim(\langle Z_U\rangle)\le 14$, hence there are at least two and at most three new relations among singular triples.
We computed symbolically the new three linear relations in this way. Consider $\xx_1'=(x_{1,2},x_{1,1})$ and the $6\times 6$ matrices
\begin{align*}
A_1 &=
\begin{bmatrix}
U(\xx_1'\otimes\xx_2) & \xx_3 & U_{(0,0)} & U_{(0,1)} & U_{(0,2)} & U_{(1,0)}
\end{bmatrix}^T\\
A_2 &=
\begin{bmatrix}
U(\xx_1'\otimes\xx_2) & \xx_3 & U_{(0,0)} & U_{(0,1)} & U_{(0,2)} & U_{(1,1)}
\end{bmatrix}^T\\
A_3 &=
\begin{bmatrix}
U(\xx_1'\otimes\xx_2) & \xx_3 & U_{(0,0)} & U_{(0,1)} & U_{(0,2)} & U_{(1,2)}
\end{bmatrix}^T
\end{align*}
where $U_{(i,j)}=(u_{ij1},\ldots,u_{ij6})$ for all $(i,j)\in[2]\times[3]$.
Each determinant $\det(A_j)$, after the substitutions $z_{ijk}=x_{1,i}x_{2,j}x_{3,k}$, gives a linear relation among the 18 singular triples of the generic tensor $U$.
Each linear relation can be seen as a sum of $2$ determinants of $6\times 6$ matrices.
\end{Example}

The next proposition generalizes the observations made in Section \ref{sec: 22n equations} and in Examples \ref{ex: 2x3x5} and \ref{ex: 2x3x6}, and provides a method to check easily that the new relations among singular $k$-tuples of a tensor $U$ are satisfied by $U$ itself.

\begin{Proposition}\label{prop: det A}
Consider a tensor $U=(u_{i_1\cdots i_k})$ of format $\nn=(n_1,\ldots,n_k)$, where $n_k\ge 1+\sum_{i=1}^{k-1}(n_i-1)$. Consider the $n_k\times n_k$ matrix
\[
A=
\begin{bmatrix}
U(\yy_1\otimes\cdots\otimes\yy_{k-1}) & \yy_k & U_{I_1} & \cdots & U_{I_{n_k-2}}
\end{bmatrix}^T
\]
where $I_l\in\prod_{i=1}^{k-1}[n_i]$ and $U_{I_l}=(u_{j_1\cdots\,j_{k-1}j_{k}}\mid(j_1,\ldots,j_{k-1})\in I_l)$ for all $l\in[n_k-2]$, while using \eqref{eq: def contraction},
\[
U(\yy_1\otimes\cdots\otimes\yy_{k-1})_{s}=\sum_{j_\ell\in[n_\ell]}u_{j_1\cdots\,j_{k-1}s}\,y_{1,j_1}\cdots y_{k-1,j_{k-1}}\quad\forall\,s\in[n_k]\,.
\]
Then $\det(A)$ contains only terms in $y_{1,j_1}\cdots y_{k,j_{k}}$ with $(j_1,\ldots,j_{k-1})\in \prod_{i=1}^{k-1}[n_i]\setminus\{I_1,\ldots,I_{n_k-2}\}$.
\end{Proposition}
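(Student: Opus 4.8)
The plan is to expand $\det(A)$ via the Laplace (cofactor) expansion along the first row, which is the vector $U(\yy_1\otimes\cdots\otimes\yy_{k-1})$, whose $s$-th entry is $\sum_{j_1,\ldots,j_{k-1}} u_{j_1\cdots j_{k-1}s}\,y_{1,j_1}\cdots y_{k-1,j_{k-1}}$. After this expansion, $\det(A)=\sum_{s\in[n_k]} U(\yy_1\otimes\cdots\otimes\yy_{k-1})_s\cdot C_s$, where $C_s$ is (up to sign) the minor of $A$ obtained by deleting the first row and the $s$-th column; note $C_s$ involves only the entries of $\yy_k$ and of $U_{I_1},\ldots,U_{I_{n_k-2}}$, hence is a polynomial in $y_{k,1},\ldots,y_{k,n_k}$ (linear in these, since $\yy_k$ contributes one row) and in the $u_{I_l}$'s. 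Substituting the only place where the $y_{i,j_i}$ for $i<k$ enter, the coefficient of the monomial $y_{1,j_1}\cdots y_{k-1,j_{k-1}}\,y_{k,s}$ in $\det(A)$ is, up to sign, $u_{j_1\cdots j_{k-1}s}$ times the $(k-1)$-tuple-indexed cofactor contribution — but more useful is to regroup so as to see which index tuples $(j_1,\ldots,j_{k-1})$ can survive.

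The key observation is this: if $(j_1,\ldots,j_{k-1})=I_l$ for some $l\in[n_k-2]$, then the column-vector-valued "partial derivative" $\partial_{y_{1,j_1}}\cdots\partial_{y_{k-1,j_{k-1}}}$ applied to the first row of $A$ produces exactly the row $U_{I_l}=(u_{j_1\cdots j_{k-1}1},\ldots,u_{j_1\cdots j_{k-1}n_k})$, which already appears as another row of $A$. Concretely, write the first row of $A$ as a linear combination $\sum_{(j_1,\ldots,j_{k-1})} y_{1,j_1}\cdots y_{k-1,j_{k-1}}\, U_{(j_1,\ldots,j_{k-1})}$ over all tuples in $\prod_{i=1}^{k-1}[n_i]$, where $U_{(j_1,\ldots,j_{k-1})}\coloneqq(u_{j_1\cdots j_{k-1}s})_{s\in[n_k]}$. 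By multilinearity of the determinant in its first row, $\det(A)=\sum_{(j_1,\ldots,j_{k-1})} y_{1,j_1}\cdots y_{k-1,j_{k-1}}\cdot \det\!\begin{bmatrix} U_{(j_1,\ldots,j_{k-1})} & \yy_k & U_{I_1} & \cdots & U_{I_{n_k-2}}\end{bmatrix}^T$. Whenever the tuple $(j_1,\ldots,j_{k-1})$ equals one of $I_1,\ldots,I_{n_k-2}$, the matrix inside this determinant has two equal rows, so the determinant vanishes. Hence only the tuples in $\prod_{i=1}^{k-1}[n_i]\setminus\{I_1,\ldots,I_{n_k-2}\}$ contribute, and each surviving term carries the monomial factor $y_{1,j_1}\cdots y_{k-1,j_{k-1}}$ times a polynomial in $\yy_k$ and the $u$'s; after the substitution $z_{j_1\cdots j_k}=y_{1,j_1}\cdots y_{k,j_k}$ in the determinant one reads off that only the $z$-variables with $(j_1,\ldots,j_{k-1})$ in the stated complement appear. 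This proves the claim.

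I do not expect a genuine obstacle here — the proposition is essentially a bookkeeping consequence of multilinearity of the determinant in the first row together with the "repeated row kills the determinant" principle. The only point requiring a little care is confirming that each entry of the first row of $A$ is indeed exactly $y_{1,j_1}\cdots y_{k-1,j_{k-1}}$ paired with the row vector $U_{(j_1,\ldots,j_{k-1})}$ when the sum is organized by index tuple, i.e. matching the contraction formula \eqref{eq: def contraction} (with the $i=k$ slot omitted) against the row labels $U_{I_l}$; this is immediate from the definitions but worth stating explicitly so that the cancellation with the rows $U_{I_l}$ is manifest. The hypothesis $n_k\ge 1+\sum_{i=1}^{k-1}(n_i-1)$ is only used to guarantee that $A$ is genuinely $n_k\times n_k$ with $n_k-2$ available rows $U_{I_l}$ (so the construction makes sense and the matrix is square), and plays no role in the cancellation argument itself.
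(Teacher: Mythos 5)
Your proof is correct and follows essentially the same route as the paper's: both decompose $\det(A)$ into a sum over tuples $(j_1,\ldots,j_{k-1})$ of determinants of matrices that acquire a repeated row precisely when the tuple equals one of the $I_l$. The only cosmetic difference is that you invoke multilinearity in the first row directly, whereas the paper arrives at the same decomposition via the generalized Laplace expansion along the first two rows followed by the substitution $z_{j_1\cdots j_k}=y_{1,j_1}\cdots y_{k,j_k}$.
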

\begin{proof}
We compute $\det(A)$ by applying the generalized Laplace formula with respect to the first two rows of $A$. We use the shorthand $U_{I_l}^{(p,q)}$ to denote the row vector obtained after removing the columns $p$ and $q$ from $U_{I_l}$. We also denote by $\sigma_{p,q}$ the permutation of $[n_k]$ sending $1$ to $p$ and $2$ to $q$.
\begin{align}\label{eq: expansion det A}
\begin{split}
\det(A) &= \sum_{1\le p<q\le n_k}\mathrm{sign}(\sigma_{p,q})
\begin{vmatrix}
U(\yy_1\otimes\cdots\otimes\yy_{k-1})_{p} & U(\yy_1\otimes\cdots\otimes\yy_{k-1})_{q}\\
y_{k,p} & y_{k,q}
\end{vmatrix}
\cdot
\begin{vmatrix}
U_{{I_1}}^{(p,q)}\\
\vdots\\
U_{I_{n_k-2}}^{(p,q)}
\end{vmatrix}\\
&= \sum_{1\le p<q\le n_k}\mathrm{sign}(\sigma_{p,q})
\sum_{j_\ell\in[n_\ell]}(u_{j_1\cdots\,j_{k-1}p}z_{j_1\cdots\,j_{k-1}q}-u_{j_1\cdots\,j_{k-1}q}z_{j_1\cdots\,j_{k-1}p})
\begin{vmatrix}
U_{{I_1}}^{(p,q)}\\
\vdots\\
U_{I_{n_k-2}}^{(p,q)}
\end{vmatrix}\\
&= \sum_{j_\ell\in[n_\ell]}\sum_{1\le p<q\le n_k}\mathrm{sign}(\sigma_{p,q})
\begin{vmatrix}
u_{j_1\cdots\,j_{k-1}p} & u_{j_1\cdots\,j_{k-1}q}\\
z_{j_1\cdots\,j_{k-1}p} & z_{j_1\cdots\,j_{k-1}q}
\end{vmatrix}
\cdot
\begin{vmatrix}
U_{{I_1}}^{(p,q)}\\
\vdots\\
U_{I_{n_k-2}}^{(p,q)}
\end{vmatrix}\\
&= \sum_{j_\ell\in[n_\ell]}\det(\tilde{A}(j_1,\ldots,j_{k-1}))\,,
\end{split}
\end{align}
where in the second equality in \eqref{eq: expansion det A} we plugged in the relations $u_{j_1\cdots\,j_{k}}=y_{1,j_1}\cdots y_{k,j_{k}}$ and
\[
\tilde{A}(j_1,\ldots,j_{k-1})\coloneqq
\begin{bmatrix}
U_{(j_1,\ldots,j_{k-1})} & z_{(j_1,\ldots,j_{k-1})} & U_{{I_1}} & \cdots & U_{I_{n_k-2}}
\end{bmatrix}^T\,.
\]
Hence $\det(\tilde{A}(j_1,\ldots,j_{k-1}))\neq 0$ only if $(j_1,\ldots,j_{k-1})\in\prod_{i=1}^{k-1}[n_i]\setminus\{I_1,\ldots,I_{n_k-2}\}$, giving the desired result.
\end{proof}

Equation \eqref{eq: expansion det A} tells us that $\det(A)$ may be written as a sum of determinants of the matrices $\tilde{A}(j_1,\ldots,j_{k-1})$. The number of non-zero summands is equal to the cardinality of $\prod_{i=1}^{k-1}[n_i]\setminus\{I_1,\ldots,I_{n_k-2}\}$, that is $n_1\cdots n_{k-1}-n_k+2$. For example, we have seen in Section \ref{sec: 22n equations} that the unknown relations among singular triples of a $2\times 2\times 4$ tensor can be written as the sum of $2\cdot 2-4+2=2$ determinants. Or in Example \ref{ex: 2x3x5} that the unknown relations among singular triples of a $2\times 3\times 5$ tensor can be written as the sum of $2\cdot 3-5+2=3$ determinants. 

\medskip

\begin{Theorem}\label{thm: membership}
Let $T\in V$ be a generic tensor of order-$k$ of the following formats:
\begin{enumerate}
    \item $k=3$, $\nn=(2,2,n)$, $n\ge4$; 
    \item $k=3$, $\nn=(2,3,n)$, $n\ge5$;
    \item $k=\ell+1$, $\nn=(2,\dots,2,\ell+2)$, $\ell\ge 4$.
\end{enumerate}
Then $T\in \langle Z_T\rangle$.
\end{Theorem}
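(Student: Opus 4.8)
The plan is to treat the three families uniformly by exhibiting, in each case, an explicit linear relation satisfied by every singular $k$-tuple of $T$ together with the coordinates of $T$ itself, and then to verify that such relations genuinely cut out $\langle Z_T\rangle$ (not a larger subspace). The key observation from Section~\ref{sec: relations} is that the equations of the critical space $H_T$ in \eqref{eq: equations critical space} are already satisfied by $T$, so the entire content of the theorem is concentrated in the \emph{extra} relations that appear beyond the boundary format, namely the determinantal relations of the form $\det(A')$, $\det(A_i)$ studied in Sections~\ref{sec: 22n equations}--\ref{sec: 23n equations}.

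\textbf{Step 1 (reduction).} By Proposition~\ref{prop: format 224} and Theorem~\ref{thm: format 23n}, together with the symbolic verifications in Examples~\ref{ex: 2x3x5} and~\ref{ex: 2x3x6}, in cases (1)--(2) for the smallest concise values $n=4$ (resp.\ $n\in\{5,6\}$) the span $\langle Z_T\rangle$ is the intersection of $\PP(H_T)$ with the zero loci of the finitely many determinantal relations $\det(A_i)$ produced by Proposition~\ref{prop: det A}; likewise in case (3) with $\ell\ge 4$ Theorem~\ref{thm: 2...2k+2} gives $\langle Z_T\rangle=\PP(H_T)$ outright, so there $T\in\langle Z_T\rangle$ is immediate from $T\in H_T$. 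Hence it suffices to show $T$ satisfies each extra relation $\det(A_i)=0$, and, for the non-minimal values $n>4$ in (1) and $n>6$ need not be treated since those formats are non-concise; for the intermediate concise values the computation is the same as the minimal one after padding.

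\textbf{Step 2 (the extra relations vanish on $T$).} This is where Proposition~\ref{prop: det A} does the work. Each extra relation has the form $\det(A)$ with $A=\begin{bmatrix} U(\yy_1\otimes\cdots\otimes\yy_{k-1}) & \yy_k & U_{I_1} & \cdots & U_{I_{n_k-2}}\end{bmatrix}^T$, and \eqref{eq: expansion det A} expresses $\det(A)=\sum_{j_\ell}\det(\tilde A(j_1,\dots,j_{k-1}))$, a sum over the $n_1\cdots n_{k-1}-n_k+2$ multi-indices not among $\{I_1,\dots,I_{n_k-2}\}$. To evaluate this relation at $T$ we substitute $z_{j_1\cdots j_k}=u_{j_1\cdots j_k}$; then each $\tilde A(j_1,\dots,j_{k-1})$ has its first row $U_{(j_1,\dots,j_{k-1})}$ equal to its second row $z_{(j_1,\dots,j_{k-1})}=U_{(j_1,\dots,j_{k-1})}$, so $\det(\tilde A(j_1,\dots,j_{k-1}))=0$ for every summand, whence $\det(A)|_{z=U}=0$. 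Since this holds for all the matrices $A$ ranging over the admissible choices of the blocks $U_{I_l}$ and of the ``flipped'' contraction in the first row, $T$ satisfies every extra determinantal relation. Combined with $T\in H_T$, this shows $T\in\PP(H_T)\cap\bigcap_i\{\det(A_i)=0\}=\langle Z_T\rangle$ in cases (1) and (2), and $T\in\PP(H_T)=\langle Z_T\rangle$ in case (3).

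\textbf{Main obstacle.} The only genuinely nontrivial point is the \emph{completeness} of the chosen set of extra relations: one must know that $\PP(H_T)$ intersected with the zero loci of the finitely many $\det(A_i)$ has exactly the dimension computed cohomologically in Section~\ref{sec: cohomology} (i.e.\ that no further independent relations are hidden). For the formats in the statement this is exactly what Proposition~\ref{prop: format 224}, Theorem~\ref{thm: format 23n} and the Macaulay2 checks in Examples~\ref{ex: 2x3x5}--\ref{ex: 2x3x6} certify: the dimension drop produced by the $\det(A_i)$ matches the lower bound forced by the cohomology, pinning down $\langle Z_T\rangle$ precisely. Given that input, Step~2 is a one-line determinantal identity, and the membership $T\in\langle Z_T\rangle$ follows.
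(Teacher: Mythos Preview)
Your overall architecture matches the paper exactly: case~(3) is disposed of via Theorem~\ref{thm: 2...2k+2} and $T\in H_T$, and cases~(1)--(2) are meant to follow from the explicit determinantal expansions of Sections~\ref{sec: 22n equations}--\ref{sec: 23n equations} together with the cohomological dimension count. So the strategy is right.

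There is, however, a genuine gap in Step~2. Proposition~\ref{prop: det A} and its expansion~\eqref{eq: expansion det A} are stated for the \emph{unflipped} matrix $A$ with first row $U(\yy_1\otimes\cdots\otimes\yy_{k-1})$; for that matrix your ``rows~1 and~2 of $\tilde A$ coincide when $z=u$'' observation is correct --- but $\det(A)$ is \emph{not} one of the extra relations. Its first two rows are proportional on every singular tuple by definition, so $\det(A)=0$ follows already from the equations of $Z_T$ and contributes nothing beyond $H_T$. The actual extra relations used in Sections~\ref{sec: 22n equations}--\ref{sec: 23n equations} are built from the \emph{flipped} matrices $A'$, $A_1$, $A_2$, $A_3$, whose first row is $U(\xx_1'\otimes\xx_2)$ with $\xx_1'=(x_{1,2},x_{1,1})$. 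Redoing the Laplace expansion for these gives summands whose first two rows are $U_{(j_1,j_2)}$ and $z_{(\sigma(j_1),j_2)}$ with the first index permuted; after substituting $z=u$ these two rows are different. The vanishing one actually sees in the paper's displayed expansions of $\det(A')$, $\det(A_1)$, $\det(A_2)$ happens for another reason: the $z$-row, once specialized to $u$, coincides with one of the fixed rows $U_{I_l}$. That is a feature of the \emph{specific choice} of the blocks $I_l$ in each $A_i$ (e.g.\ in $A'$ one takes $I_1=(2,1)$, $I_2=(2,2)$, and then $z_{(2,1)}\mapsto u_{(2,1)}=U_{I_1}$), not the generic rows~1--2 coincidence you invoke. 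The paper establishes this by direct inspection of each explicit expansion; your Step~2, as written, proves only the vacuous unflipped case and does not cover the flipped matrices that carry the content.
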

\begin{proof}
The first two items are done in the Sections \ref{sec: 22n equations} and \ref{sec: 23n equations}. The last item comes from Theorem \ref{thm: 2...2k+2}, since in such case $T\in H_T=\langle Z_T\rangle$.
\end{proof}

This result and some numerical experiments in M2 give indications that the following conjecture is true.

\begin{Conjecture}\label{con: Tspan}
Suppose $T\in V$ is a generic tensor. Then $T\in \langle Z_T\rangle$. 
\end{Conjecture}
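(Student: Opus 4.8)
The plan is to isolate the genuinely new content of the conjecture and then attack it through the cohomological description of $\langle Z_T\rangle$ from Section \ref{sec: cohomology}, combined with the determinantal relations of Proposition \ref{prop: det A}, which by construction are satisfied by $T$ itself.

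\emph{Reductions.} If $\nn$ is non-concise, then for generic $T$ there is a concise subspace $V'=\bigotimes_iL_i$ with $T\in V'$; since every contraction $\nabla_iT$ lies in $L_i$, every singular $k$-tuple of $T$ has all its vectors in the $L_i$, so $Z_T\subset\PP(V')$ and $\langle Z_T\rangle$ is unchanged when computed inside $\PP(V')$. As $T\in V'$, this reduces the conjecture to concise formats. If moreover $\nn$ is sub-boundary (in particular every concise format with $k\le 2$, which is the SVD), then $\langle Z_T\rangle=\PP(H_T)$ by Proposition \ref{prop: equality span critical sub-boundary format} and $T\in H_T$, so we are done. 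The essential case is therefore $\nn$ concise with $n_k>1+\sum_{i<k}(n_i-1)$; this is exactly where the last concise format sits when $k\ge 3$ and all $n_i\ge 2$, so the reduction genuinely lands here.

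\emph{What remains to prove.} In the essential case one has $h^1(\sE^{(1)})=0$ (from the Euler sequence on each $\PP(V_i)$) and $h^0(\sF_2({\bf 1}))=0$ (this is exactly the linear independence of the $\sum_i\binom{n_i}{2}$ equations of $H_T$, Proposition \ref{prop: dim H_T}). Feeding these into the long exact sequence of the second short exact sequence in \eqref{eq: shortexactsequence} gives $h^0(\sI_{Z_T}({\bf 1}))=\sum_i\binom{n_i}{2}+h^1(\sF_2({\bf 1}))$, so the linear forms vanishing on $Z_T$ are spanned by the equations of $H_T$ together with a space of ``extra'' relations isomorphic to $H^1(\sF_2({\bf 1}))$. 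As $T$ satisfies the $H_T$-equations by definition, the conjecture is equivalent, in this case, to the assertion that $T$ satisfies every extra relation. The determinants $\det(A)$ of Proposition \ref{prop: det A}, formed over all index-permuted contraction rows and all admissible choices of the completing rows $U_{I_l}$, give (after the substitution $z_{i_1\cdots i_k}=x_{1,i_1}\cdots x_{k,i_k}$) linear forms that vanish on $Z_T$ and are linearly independent from the $H_T$-equations; moreover, by the computation inside the proof of Proposition \ref{prop: det A}, plugging $z=T$ turns each summand $\det(\tilde A(j_1,\dots,j_{k-1}))$ into the determinant of a matrix with two equal rows, so $T$ satisfies each of them. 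Hence it suffices to show that these determinants span the whole extra space, i.e.\ that the number of linearly independent ones, modulo $H_T$, is at least $h^1(\sF_2({\bf 1}))$.

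\emph{Computing $h^1(\sF_2({\bf 1}))$ and matching.} I would obtain $h^1(\sF_2({\bf 1}))$ by climbing the long exact sequences of \eqref{eq: shortexactsequence}, exactly as in the proof of Theorem \ref{thm: 2...2k+2}: via K\"unneth and Bott's theorem the relevant $H^q(\sE^{(r)})$ are explicit $\prod_i\mathrm{GL}(V_i)$-modules, the chains of Corollary \ref{coh: chains} reduce $H^1(\sF_2({\bf 1}))$ to a subquotient of kernels and cokernels of the equivariant contraction maps $\gamma$ built from $T$, and the maximal rank of $\gamma$ for generic $T$ pins down its dimension. The final point is to identify this space with the span of the determinantal forms; since both are governed by the same contraction with $T$, one expects them to coincide by a Schur's-lemma argument together with genericity, which would close the argument. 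The main obstacle is precisely this matching: in the present paper it is verified format by format through an explicit Bott computation cross-checked with Macaulay2, whereas a uniform proof needs either a closed-form evaluation of $h^1(\sF_2({\bf 1}))$ from Bott's formulas alongside an independent count of linearly independent determinants of Proposition \ref{prop: det A}, or a representation-theoretic identification of $H^1(\sF_2({\bf 1}))$ onto which the ``determinant space'' surjects. A secondary difficulty is to establish, uniformly in the format, the vanishing of the higher $H^q(\sE^{(r)})$ needed for the chains in Corollary \ref{coh: chains} to collapse --- at present only the families $(2,\dots,2,n)$ and $(2,3,n)$ are under control. A different and more optimistic route, which I do not know how to realize, would be to produce explicit scalars $\lambda_i$ --- a higher analogue of the singular values --- with $T=\sum_i\lambda_i\,\mathrm{Seg}(\xx^{(i)})$, reducing for $k=2$ to the classical SVD.
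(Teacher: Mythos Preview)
The statement is labelled a \emph{Conjecture} in the paper and is not proved there; it is established only for the formats listed in Theorem~\ref{thm: membership}, and the remaining cases are supported only by the numerical evidence of Section~\ref{sec: julia}. There is therefore no proof in the paper against which to compare your attempt.

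What you have written is not a proof but a strategy, and you correctly identify its gaps yourself. Your reductions are sound: the non-concise case reduces to a concise one (the contractions of $T$ land in the $L_i$, and the Friedland--Ottaviani count stabilizes, so for generic $T$ no singular tuples are gained in the ambient space), and the sub-boundary case is Proposition~\ref{prop: equality span critical sub-boundary format}. Your identification of the remaining task --- showing that the determinantal forms of Proposition~\ref{prop: det A} span, modulo the $H_T$-equations, the entire extra space $H^1(\sF_2({\bf 1}))$ --- is exactly the obstacle the paper meets and is why the statement remains a conjecture. The two difficulties you flag (a uniform evaluation of $h^1(\sF_2({\bf 1}))$ and uniform vanishing of the higher $H^q(\sE^{(r)})$ so that the chains collapse) are real: the paper handles them only for $(2,\ldots,2,n)$ and $(2,3,n)$ via the explicit Bott computations of Lemmas~\ref{lemma: coh1}--\ref{lemma: coh3}, and even there the fact that the determinantal relations exhaust the extra space is verified by symbolic computation rather than by a general argument. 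One minor imprecision in your outline: the ``essential case'' after your reductions is every concise non-sub-boundary format, not only the last concise one, so for each choice of $(n_1,\ldots,n_{k-1})$ a whole range of values of $n_k$ remains, not a single format.
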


The next result extends Theorem \ref{thm: fiber} by the second author for some special formats.

\begin{Theorem}\label{thm: fiberbf}
Let $T\in V$ be a generic tensor of order $k$ and assume Conjecture \ref{con: Tspan} holds, i.e., $T\in \langle Z_T\rangle$. Then the fiber of the rational map $\tau\colon T\mapsto \mathrm{Eig}(T)$ is $T$ itself. 
\end{Theorem}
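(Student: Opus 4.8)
The plan is to follow the strategy behind Theorem~\ref{thm: fiber}, feeding in the membership $T\in\langle Z_T\rangle$ supplied by Conjecture~\ref{con: Tspan} wherever the sub-boundary proof used the stronger equality $\langle Z_T\rangle=\PP(H_T)$. Fix a generic $T$ and let $T'$ be any tensor with $\tau(T')=\tau(T)$; the goal is $[T']=[T]$. Equality of the images forces $T$ and $T'$ to share their critical points on the Segre variety, so in particular $Z_{T'}=Z_T$ as subsets of $\PP(V)$, hence $\langle Z_{T'}\rangle=\langle Z_T\rangle=:L$; moreover, for every common (normalized) singular tuple $(\xx_1^{(j)},\dots,\xx_k^{(j)})$ and every $i\in[k]$ the contractions agree, $\nabla_i T(\xx^{(j)})=T(\xx^{(j)})\,\xx_i^{(j)}=T'(\xx^{(j)})\,\xx_i^{(j)}=\nabla_i T'(\xx^{(j)})$. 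If the fiber $\tau^{-1}(\tau(T))$ were positive dimensional we could choose $T'$ generic inside it, so Conjecture~\ref{con: Tspan} would apply to $T$ and to $T'$ and give $T,T'\in L$; then $W:=T-T'\in L$ satisfies $\nabla_i W(\xx^{(j)})=0$ for all $i\in[k]$ and all $j\in[N]$, $N=\mathrm{ed}(\nn)$.

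It then remains to prove that a tensor $W\in\langle Z_T\rangle$ with vanishing mode-$i$ contractions at every $\xx^{(j)}$ must be zero, since this forces $T'=T$ and hence $\tau^{-1}(\tau(T))=\{[T]\}$. Read in the last mode, $\nabla_k W(\xx^{(j)})=0$ says that the flattening $\bigotimes_{i<k}V_i\to V_k$ of $W$ annihilates $\xx_1^{(j)}\otimes\cdots\otimes\xx_{k-1}^{(j)}$, so the statement reduces to the $N$ rank-one tensors $\xx_1^{(j)}\otimes\cdots\otimes\xx_{k-1}^{(j)}$ spanning $\bigotimes_{i<k}V_i$ (and symmetrically in the other modes). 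Equivalently, writing $W=\sum_j c_j v^{(j)}$ with $v^{(j)}=\xx_1^{(j)}\otimes\cdots\otimes\xx_k^{(j)}$, the equations $\nabla_i W(\xx^{(j')})=\sum_j c_j\bigl(\prod_{\ell\ne i}q_\ell(\xx_\ell^{(j)},\xx_\ell^{(j')})\bigr)\xx_i^{(j)}=0$ form a homogeneous linear system on $\mathbf c=(c_j)$ whose only solutions should be the linear relations among the $v^{(j)}$.

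The main obstacle is exactly this spanning/corank statement, which here plays the role of the cohomological vanishing of \cite[Lemma~3.2]{DOT} in the sub-boundary argument. I would establish it for generic $T$ by semicontinuity: the relevant rank is an open condition on $T$, so it suffices to exhibit, in the prescribed format, a single tensor whose singular tuples project to a spanning family (equivalently, for which the linear system above has the correct corank). For the formats of Sections~\ref{sec: cohomology}--\ref{sec: relations} the explicit value of $\dim\langle Z_T\rangle$ computed there provides precisely the bookkeeping needed to certify this corank, so that, conditionally on Conjecture~\ref{con: Tspan}, the fiber of $\tau$ over a generic $T$ collapses to the single point $[T]$ --- this is the content of Corollary~\ref{cor: fiber}, and of the theorem once the conjecture is established in general.
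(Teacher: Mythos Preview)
Your argument has a genuine gap at the step where you claim $\nabla_i T(\xx^{(j)})=\nabla_i T'(\xx^{(j)})$. The map $\tau$ records only the unordered set of projective singular tuples, not the singular values: if $\tau(T)=\tau(T')$ then for each $j$ there exist scalars $\sigma_j,\sigma_j'$ with $\nabla_i T(\xx^{(j)})=\sigma_j\,\xx_i^{(j)}$ and $\nabla_i T'(\xx^{(j)})=\sigma_j'\,\xx_i^{(j)}$, but nothing forces $\sigma_j=\sigma_j'$. Hence $\nabla_i W(\xx^{(j)})=(\sigma_j-\sigma_j')\,\xx_i^{(j)}$, which says precisely that each $\xx^{(j)}$ is again a singular tuple of $W$, i.e.\ $Z_W\supseteq Z_T$ --- and you are back to the original question rather than to a vanishing statement. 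Your flattening argument (``$\nabla_k W$ kills all $\xx_1^{(j)}\otimes\cdots\otimes\xx_{k-1}^{(j)}$'') therefore does not get off the ground. Even granting the vanishing, the spanning claim for the projections $\xx_1^{(j)}\otimes\cdots\otimes\xx_{k-1}^{(j)}$ is left as a plan (``I would establish it by semicontinuity''), not a proof; and your appeal to Conjecture~\ref{con: Tspan} for a generic $T'$ \emph{inside the fiber} needs more care, since genericity in $V$ and genericity in a proper subvariety are not the same.

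The paper's proof takes a completely different and much shorter route. It does not analyze $W=T-T'$ at all. Instead it specializes $T$ to a generic tensor lying in a sub-boundary subspace $\bigotimes_{i<k}V_i\otimes L\subset V$; Theorem~\ref{thm: specialization} then gives $\langle Z_T\rangle\subset\bigotimes_{i<k}V_i\otimes L$. Now if $U$ were any tensor with $Z_U=Z_T$, the conjectural membership $U\in\langle Z_U\rangle=\langle Z_T\rangle$ forces $U$ into that same sub-boundary subspace, where Theorem~\ref{thm: fiber} already says the fiber is a single point. Thus the fiber over this particular $\mathrm{Eig}(T)$ is $\{[T]\}$, and semicontinuity of the rank of $\tau$ together with linearity of the fibers promotes this to the generic fiber. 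So the role of $T\in\langle Z_T\rangle$ in the paper is purely to trap any candidate $U$ in a sub-boundary subspace, not to compare contractions of $T$ and $T'$.
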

\begin{proof}
Let $T\in \bigotimes_{i=1}^{k-1}\C^{n_i}\otimes L\subset V$ be a generic tensor of boundary format $\nn$, that is $\dim(L)=1+\sum_{i=1}^{k-1}(n_i-1)$. By Theorem \ref{thm: specialization} we have that $\langle Z_T\rangle \subset\bigotimes_{i=1}^{k-1}\C^{n_i}\otimes L$. Furthermore Theorem \ref{thm: fiber} says that the fiber of the map $\tau\colon T\mapsto\mathrm{Eig}(T)$ introduced in \eqref{eq: def tau} is one point for tensors in spaces satisfying the boundary format. Suppose that $U\in V$ is not contained in any subspace satisfying boundary format. Assuming that Conjecture \ref{con: Tspan} holds, it follows that $U\in\langle Z_U\rangle$ and $\langle Z_U\rangle$ is not contained in any subspace of boundary format. Thus $Z_U\neq Z_T$ and the fiber of the map at $\mathrm{Eig}(T)$ is a single point.

We now proceed by using the fact that the rank of the map $\tau$ satisfies semi-continuity, therefore the map is generically finite-to-one. Furthermore, since the fibers are linear spaces, we obtain that the generic fiber is a single point.
\end{proof}

\begin{Corollary}\label{cor: fiber}
If $T$ is a generic tensor of one of the formats in Theorem \ref{thm: membership}, then the fiber of the map $\tau$ is $T$ itself.
\end{Corollary}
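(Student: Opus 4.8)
The statement to prove is Corollary \ref{cor: fiber}: if $T$ is a generic tensor of one of the formats listed in Theorem \ref{thm: membership}, then the fiber of $\tau$ over $\mathrm{Eig}(T)$ is $\{T\}$.

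The plan is to invoke Theorem \ref{thm: fiberbf} directly, checking only that its hypothesis is satisfied for the formats in question. Theorem \ref{thm: fiberbf} asserts that the fiber of $\tau$ is a single point \emph{assuming Conjecture \ref{con: Tspan} holds}, i.e.\ assuming $T\in\langle Z_T\rangle$. So the entire content of the corollary is the observation that for the three families of formats appearing in Theorem \ref{thm: membership} — namely $k=3$ with $\nn=(2,2,n)$ for $n\ge 4$, $k=3$ with $\nn=(2,3,n)$ for $n\ge 5$, and $k=\ell+1$ with $\nn=(2,\dots,2,\ell+2)$ for $\ell\ge 4$ — the membership $T\in\langle Z_T\rangle$ is \emph{not} conjectural but has been established unconditionally in Theorem \ref{thm: membership}. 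Thus for these formats the hypothesis of Theorem \ref{thm: fiberbf} is vacuously met, and its conclusion applies.

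Concretely, the proof would read: let $T$ be a generic tensor of one of the formats in Theorem \ref{thm: membership}. By that theorem, $T\in\langle Z_T\rangle$, so the hypothesis of Theorem \ref{thm: fiberbf} is fulfilled (the appeal to Conjecture \ref{con: Tspan} there is only needed to supply exactly this membership). Applying Theorem \ref{thm: fiberbf}, the fiber $\tau^{-1}(\mathrm{Eig}(T))$ consists of $T$ alone. One might add a sentence recalling the mechanism of Theorem \ref{thm: fiberbf}: $T$ lies in the span of its singular tuples, that span is an intrinsic linear space attached to the scheme $Z_T$, and since $\tau$ has linear fibers and is generically finite by semicontinuity of rank, the generic fiber is a reduced point.

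I do not anticipate a genuine obstacle here, since the work has all been front-loaded into Theorems \ref{thm: membership} and \ref{thm: fiberbf}; the only thing to be careful about is the logical bookkeeping — making explicit that ``assume Conjecture \ref{con: Tspan}'' in the statement of Theorem \ref{thm: fiberbf} is really ``assume $T\in\langle Z_T\rangle$'', which is precisely what Theorem \ref{thm: membership} delivers for the listed formats, so no circularity is introduced. If one wanted to be maximally self-contained one could instead re-run the argument of Theorem \ref{thm: fiberbf} verbatim with Theorem \ref{thm: membership} substituted in place of the conjecture, but quoting Theorem \ref{thm: fiberbf} is cleaner.
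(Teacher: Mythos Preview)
Your proposal is correct and matches the paper's approach exactly: the corollary is stated without proof in the paper, being an immediate consequence of combining Theorem \ref{thm: membership} (which supplies $T\in\langle Z_T\rangle$ for those formats) with Theorem \ref{thm: fiberbf} (whose only hypothesis beyond genericity is that membership). Your logical bookkeeping is accurate and nothing further is needed.
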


\section{Computing singular tuples using HomotopyContinuation.jl}\label{sec: julia}

In this section we describe a code that computes numerically the singular tuples of a tensor $U$ of format $\nn$, if there are finitely many. The code uses the Julia package HomotopyContinuation.jl \cite{HomotopyContinuation.jl}. Since in all our examples we input a generic tensor $U$, then all its singular tuples $(\xx_1,\ldots,\xx_k)$ of $U$ are such that $x_{i,1}\neq 0$ for all $i\in[k]$. For this reason, in the code we will consider the following square system in the $n_1+\cdots+n_k+k$ variables $\xx_1,\ldots,\xx_k,\lambda_1,\ldots,\lambda_k$, which is obtained from \eqref{eq: def singular vector tuple matrix}:
\begin{equation}\label{eq: system svt lambdas}
\begin{cases}
U(\xx_1\otimes\cdots\otimes \xx_{i-1}\otimes \xx_{i+1}\otimes\cdots\otimes \xx_k) = \lambda_i\,\xx_i\quad\forall\,i\in[k]\\
x_{i,1}=1\quad\forall\,i\in[k]\,.
\end{cases}
\end{equation}
The value $\lambda_i$ is called the {\em $i$-th singular value} of the singular $k$-tuple $(\xx_1,\ldots,\xx_k)$. If the tensor $U$ is sufficiently generic, then all its singular tuples are non-isotropic, hence up to rescaling they are normalized. The immediate consequence is that, for every (normalized) singular tuple $(\xx_1,\ldots,\xx_k)$ of $U$ we have $\lambda_1=\cdots=\lambda_k=\lambda$. The value $\lambda$ is called the {\em singular value} associated to $(\xx_1,\ldots,\xx_k)$.

First, applying Theorem~\ref{thm: FO formula} we determine the number of singular tuples $\mathrm{ed}(\nn)$ of a generic tensor of format $\nn$ with the following function {\blue \verb|number_singular_tuples|}:

\begin{Verbatim}[commandchars=\\\{\}]
\textcolor{magenta}{using} HomotopyContinuation;
\textcolor{magenta}{function} \textcolor{blue}{number_singular_tuples}(dims)
    ldims = length(dims);
    \textcolor{purple}{@var} t[1:ldims];
    f = prod([sum([(sum(t)-t[i])^(dims[i]-1-j)*t[i]^j \textcolor{magenta}{for} j\textcolor{magenta}{=}0:(dims[i]-1)])
              \textcolor{magenta}{for} i\textcolor{magenta}{=}1:ldims]);
    (expf,cf) = exponents_coefficients(f,t);
    dims2 = map(1:ldims) \textcolor{magenta}{do} i dims[i]-1 \textcolor{magenta}{end};
    ind = findall(i -> expf[:,i]==vcat(dims2...), collect(1:size(expf,2)))[1];
    convert(Int64, cf[ind])
\textcolor{magenta}{end}
\end{Verbatim}

The function {\blue \verb|singular_tuples|} computes numerically the singular tuples of a tensor $U$:

{\small
\begin{Verbatim}[commandchars=\\\{\}]
\textcolor{magenta}{function} \textcolor{blue}{singular_tuples}(U)
    \textcolor{orange}{# (0) Preliminary settings}
    dims = size(U);
    ldims = length(dims);
    CI = CartesianIndices(U);
    \textcolor{orange}{# (1) Define the variables}
    varu = map(CI) \textcolor{magenta}{do} i Variable(\textcolor{PineGreen}{:u}, collect(Tuple(i))...) \textcolor{magenta}{end};
    varu_vector = vec(varu);
    varl = map(1:ldims) \textcolor{magenta}{do} i Variable(\textcolor{PineGreen}{:l},i) \textcolor{magenta}{end};
    varx = map(1:ldims) \textcolor{magenta}{do} i map(1:dims[i]) \textcolor{magenta}{do} j Variable(\textcolor{PineGreen}{:x}, i, j) \textcolor{magenta}{end} \textcolor{magenta}{end};
    varx_vector = vcat(varx...);
    var_vector = vcat(varx_vector,varl);
    \textcolor{orange}{# (2) Define the tensor U}
    tU = sum([varu[i]*prod([varx[j][i[j]] \textcolor{magenta}{for} j\textcolor{magenta}{=}1:ldims]) \textcolor{magenta}{for} i \textcolor{magenta}{in} CI]);
    \textcolor{orange}{# (3) Write the equations defining singular tuples}
    eq1 = [differentiate(tU,varx[i][j])-varl[i]*varx[i][j] \textcolor{magenta}{for} i\textcolor{magenta}{=}1:ldims \textcolor{magenta}{for} j\textcolor{magenta}{=}1:dims[i]];
    eq = vcat(eq1,[varx[i][1]-1 \textcolor{magenta}{for} i\textcolor{magenta}{=}1:ldims]);
    sys_tuples = System(eq; variables = var_vector, parameters = varu_vector);
    \textcolor{orange}{# (4) Write one start solution of the previous system}
    randl = rand(ComplexF64);
    L0 = vcat(map(i -> [1;zeros(dims[i]-1)], 1:ldims)...);
    L0 = vcat(L0,[randl \textcolor{magenta}{for} i\textcolor{magenta}{=}1:ldims]);
    \textcolor{orange}{# (5) Write start parameters for the start solution}
    U0 = Array\{ComplexF64\}(\textcolor{PineGreen}{undef}, dims);
    \textcolor{magenta}{for} i \textcolor{magenta}{in} CI
        \textcolor{magenta}{if} length(findall(>(1), Tuple(i))) == 0
            U0[i] = randl
        \textcolor{magenta}{elseif} length(findall(>(1), Tuple(i))) == 1
            U0[i] = 0
        \textcolor{magenta}{else}
            U0[i] = rand(ComplexF64)
        \textcolor{magenta}{end}
    \textcolor{magenta}{end};
    U0_vec = vec(U0);
    U_vec = vec(U);
    \textcolor{orange}{# (6) Track the start solution L0 to a solution for the tensor U}
    sol_1 = solve(sys_tuples, L0; start_parameters = U0_vec, target_parameters = U_vec);
    L = solutions(sol_1);
    \textcolor{orange}{# (7) Compute the number of singular tuples of U}
    ed = number_singular_tuples(dims);
    \textcolor{orange}{# (8) Find all other solutions of the system for U using monodromy}
    sol = monodromy_solve(sys_tuples, L, U_vec, target_solutions_count = ed);
    solutions(sol)
\textcolor{magenta}{end}
\end{Verbatim}
}

The following list outlines the main steps of the function {\blue \verb|singular_tuples|}:
\begin{enumerate}
    \item Introduce the variables $u_{j_1\cdots,j_k}$, $\xx_i=(x_{i,1},\ldots,x_{i,n_i})$ and $\lambda_1,\ldots,\lambda_k$.
    \item Write the $k$-linear form \verb|tU| associated to the tensor $U$. This is useful for writing \eqref{eq: system svt lambdas}.
    \item Define the system \eqref{eq: system svt lambdas} whose solutions are the singular tuples $(\xx_1,\ldots,\xx_k)$ together with their singular values $(\lambda_1,\ldots,\lambda_k)$. 
    \item Declare the start solution $(\xx_1^*,\ldots,\xx^*_k,\lambda_1^*,\ldots,\lambda_k^*)$, where $\xx_i^*=(1,0,\ldots,0)\in\C^{n_i}$ for all $i\in[k]$, $\lambda_1^*=\cdots=\lambda_k^*=\lambda^*$ and $\lambda^*$ is a randomly chosen complex number.
    \item Determine a tensor $U^*=(u_{j_1,\ldots,j_k}^*)$ (\verb|U0| in the code) such that $(\xx_1^*,\ldots,\xx^*_k,\lambda_1^*,\ldots,\lambda_k^*)$ is a solution of \eqref{eq: system svt lambdas} with respect to $U^*$. We build such a tensor by setting
    \[
    u_{j_1\cdots j_k}^*=
    \begin{cases}
    \lambda^* & \text{if $j_1=\cdots=j_k=1$}\\
    0 & \text{if $j_\ell=1$ for all $\ell\neq j$ and $j_i>1$ for some $i\in[k]$}\,,
    \end{cases}
    \]
    while $u_{j_1\cdots j_k}^*$ is a randomly chosen complex number otherwise. It is easy to verify from \eqref{eq: system svt lambdas} that $(\xx_1^*,\ldots,\xx^*_k)$ is a singular tuple of $U^*$ with singular value $\lambda^*$. 
    \item Determine one singular tuple of $U$ by tracking the start solution $(\xx_1^*,\ldots,\xx^*_k,\lambda_1^*,\ldots,\lambda_k^*)$ of \eqref{eq: system svt lambdas} with respect to the parameters $U^*$ to a solution with respect to the parameters $U$, using the method of homotopy continuation.
    \item Determine an upper bound for the number of singular tuples of $U$, namely the ED degree of the Segre product $\PP$ with respect to the format $\nn$. When $U$ is sufficiently generic, this upped bound is attained.
    \item Find all the other solutions of \eqref{eq: system svt lambdas} using monodromy.
\end{enumerate}

The output of {\blue \verb|singular_tuples|} is a list of solutions of the form $(\xx_1,\ldots,\xx_k,\lambda_1,\ldots,\lambda_k)$. The function {\blue \verb|to_tensor|} extracts the components $\xx_1,\ldots,\xx_k$ and returns the vectorization of the rank-one tensor $\xx_1\otimes\cdots\otimes\xx_k$:
{\small
\begin{Verbatim}[commandchars=\\\{\}]
\textcolor{magenta}{function} \textcolor{blue}{to_tensor}(solutions,dims)
    T = map(1:length(dims)) \textcolor{magenta}{do} i
        k = sum(dims[1:i])
        solutions[(k-dims[i])+1:k]
    \textcolor{magenta}{end}
    kron(T...)
\textcolor{magenta}{end}
\end{Verbatim}
}

Now that all the necessary functions are defined, we are ready to compute the dimension $\dim(\langle Z_U\rangle)$ with the following lines, where $U$ is randomly chosen.

{\small
\begin{Verbatim}[commandchars=\\\{\}]
nn = (2,3,4,5);
U = rand(ComplexF64,nn);
listsol = singular_tuples(U);
solVectors = map(s -> to_tensor(s,nn), listsol);
M = hcat(solVectors...);
\textcolor{magenta}{using} LinearAlgebra;
rk = rank(M);
A, S, B = svd(M, full = true);
[nn; rk; S[rk]; S[rk+1]]
\end{Verbatim}
}

The value \verb|rk| equals $\dim(\langle Z_U\rangle)+1$ with probability one. The SVD of the matrix \verb|M| computed in the last two lines ensures that the value $\verb|rk|$ is correct, in particular that the value of \verb|S[rk+1]| is very close to zero compared to \verb|S[rk]|.

Using the previous code, we determined $\dim(\langle Z_U\rangle)$ for several formats $\nn$. In Table \ref{table: dim span k=3} we display some values of $\dim(\langle Z_U\rangle)$ when $k=3$. We set $n_B\coloneqq n_1+n_2-2$ to be the value of $n_3$ such that $(n_1,n_2,n_B)$ is a boundary format. Furthermore, we denote by $\delta$ the difference between $n_B$ and the value of $n_3$ for which $\dim(\langle Z_T\rangle)$ stabilizes. A value of $\dim(\langle Z_T\rangle)$ is highlighted in red if for $n_3=n_B+\delta$ we have that $\langle Z_T\rangle$ is a proper subspace of $\PP(H_T)$. As the format size increases, the difference $\delta$ increases as well.

Instead in Table \ref{table: dim span k=l+1} we study the dimension $\dim(\langle Z_T\rangle)$ when $T$ is an order-$(\ell+1)$ tensor of format $\nn=(2,\ldots,2,n_{\ell+1})$. Again the blue values correspond to boundary formats. Similarly to Table \ref{table: dim span k=3}, a value of $\dim(\langle Z_T\rangle)$ is highlighted in red if, when $\dim(\langle Z_T\rangle)$ stabilizes, it is also strictly contained in $\PP(H_T)$ (including the blue entry for $(\ell,n_{\ell+1})=(2,3)$). For all entries not in red (except for $(\ell,n_{\ell+1})=(2,3)$), we have that $\langle Z_T\rangle=\PP(H_T)$. In particular, this confirms the statement of Theorem \ref{thm: 2...2k+2} for $n_{\ell+1}=\ell+2$.

\begin{table}[htbp]
\centering
\begin{tabular}{|c||c|c|c|c|}
	\hline
	$\nn$ & $n_B$ & \text{$\dim(\langle Z_T\rangle)$ if $n=n_B+\delta$} & $\delta$ & $\mathrm{ed}(\nn)$ \\
	\hhline{|=||=|=|=|=|}
	$(2,2,n)$ & $3$ & 6 & 0 & 8 \\
	\hline
	$(2,3,n)$ & $4$ & 13 & 0 & 18 \\
	\hline
	$(2,4,n)$ & $5$ & 22 & 0 & 32 \\
	\hline
	$(2,5,n)$ & $6$ & 33 & 0 & 50 \\
	\hline
	$(2,6,n)$ & $7$ & 46 & 0 & 72 \\
	\hhline{|=||=|=|=|=|}
	$(3,3,n)$ & $5$ & \red{29} & 1 & 61 \\
	\hline
	$(3,4,n)$ & $6$ & \red{50} & 1 & 148 \\
	\hline
	$(3,5,n)$ & $7$ & \red{76} & 1 & 295 \\
	\hhline{|=||=|=|=|=|}
	$(4,4,n)$ & $7$ & 87 & 1 & 480 \\
	\hline
	$(4,5,n)$ & $8$ & \red{133} & 2 & 1220 \\
	\hline
	$(4,6,n)$ & $9$ & \red{188} & 3 & 2624\\
	\hline
	$(4,7,n)$ & $10$ & \red{252} & 3 & 5012 \\
	\hhline{|=||=|=|=|=|}
	$(5,5,n)$ & $9$ & \red{204} & 3 & 3881 \\
	\hline
	$(5,6,n)$ & $10$ & \red{289} & 4 & 10166 \\
	\hline
	$(5,7,n)$ & $11$ & 388 & 4 & 23051 \\
	\hhline{|=||=|=|=|=|}
	$(6,6,n)$ & $11$ & \red{410} & 5 & 31976 \\
	\hline
	$(6,7,n)$ & $12$ & \red{551} & 6 & 85526 \\
	\hline
	$(6,8,n)$ & $13$ & \red{712} & 7 & 201536 \\
	\hline
\end{tabular}
\vspace*{1mm}
\caption{{\normalsize Values of $\dim(\langle Z_T\rangle)$ in the format $\nn=(n_1,n_2,n_3)$. }}\label{table: dim span k=3}
\end{table}

\begin{table}[htbp]
\centering
\begin{tabular}{|c||c|c|c|c|c|c|c|c|c|c|c|c||c|}
\hline
\backslashbox{$\ell$}{$n_{\ell+1}$} & 3 & 4 & 5 & 6 & 7 & 8 & 9 & 10 & 11 & 12 & 13 & $\cdots$ & $\mathrm{ed}(\nn)$ \\
\hhline{|=||============|=|}
2 & \blue{$6$} & $\cdots$ & & & & & & & & & & & 8 \\
3 & & \blue{$22$} & ${\red 23}$ & $\cdots$ & & & & & & & & & 48 \\
4 & & & \blue{$65$} & $76$ & $\cdots$ & & & & & & & & 384 \\
5 & & & & \blue{$171$} & $197$ & $222$ & ${\red 237}$ & $\cdots$ & & & & & 3840 \\
6 & & & & & \blue{$420$} & $477$ & $533$ & $588$ & $642$ & $695$ & ${\red 722}$ & $\cdots$ & 46080\\
\hline
\end{tabular}
\vspace*{1mm}
\caption{Values of $\dim(\langle Z_T\rangle)$ in the $(\ell+1)$-dimensional format $\nn=(2,\ldots,2,n_{\ell+1})$.}
\label{table: dim span k=l+1}
\end{table}

\section*{Acknowledgements}
We thank Giorgio Ottaviani for encouraging this project and for valuable guidance.
{A special acknowledgement goes to Paul Breiding and Sascha Timme for their support in coding the algorithm of Section \ref{sec: julia}.}
The first author is partially supported by the Academy of Finland Grant 323416.
The second author is supported by European Union's Horizon 2020 research and innovation programme under the Marie Sk\l odowska-Curie Actions, grant agreement 813211 (POEMA).

\bibliographystyle{alpha}
\bibliography{bibliography}

\end{document}